\theoremstyle{definition}
\newtheorem{definition}{Definition}[section]
\newtheorem{remark}[definition]{Remark}
\theoremstyle{plain}
\newtheorem{theorem}[definition]{Theorem}
\newtheorem{proposition}[definition]{Proposition}
\newtheorem{lemma}[definition]{Lemma}
\renewcommand\P{\mathbb{P}}
\newcommand\R{\mathbb{R}}
\newcommand\HH{\mathbb{H}}
\newcommand\Sp{\mathbb{S}}
\newcommand\T{\mathbb{T}}
\newcommand\FF{\mathcal{F}}
\newcommand\ti[1]{\widetilde{#1}}
\newcommand\tq{\; | \;}
\newcommand\SL{\mathrm{SL}}
\newcommand\Hom{\mathrm{Hom}}
\newcommand{\ps}[1]{\left< #1 \right>}
\newcommand\hol{\mathrm{hol}}
\newcommand\binf{\partial_{\infty}}
\newcommand\holder{\text{Hölder}}
\newcommand\PSL{\text{PSL}}
\DeclarePairedDelimiter\abs{\lvert}{\rvert}
\DeclarePairedDelimiter\norm{\lVert}{\rVert}
\title[Gap between Lyapunov exponents]{Gap between Lyapunov exponents for Hitchin representations}
\author{Matteo Costantini \and Florestan Martin-Baillon}
\begin{document}

\maketitle{}

\begin{abstract}
We study Lyapunov exponents for flat bundles over hyperbolic curves defined via parallel transport over the geodesic flow. We consider them as invariants on the space of Hitchin representations and show that there is a gap between any two consecutive Lyapunov exponents. Moreover we characterize the uniformizing representation of the Riemann surface as the one with the extremal gaps.

The strategy of the proof is to relate Lyapunov exponents in the case of Anosov representations to other invariants, where the gap result is already available or where we can directly show it. In particular, firstly we relate Lyapunov exponents to a foliated Lyapunov exponent associated to a foliation H\"older isomorphic to the unstable foliation on the unitary tangent bundle of a Riemann surface. Secondly, we relate them to the renormalized intersection product in the setting of the thermodynamic formalism
developed by
Bridgeman, Canary, Labourie and Sambarino.
\end{abstract}

\setcounter{tocdepth}{1}
\tableofcontents{}

\section{Introduction}
\label{sec:introduction}

Lyapunov exponents are characteristic numbers associated to the dynamics of trajectories of a dynamical system. 
The interest for these invariants in this paper's context originated from the study of the dynamical properties of billiard trajectories on polygonal billiards and trajectories of wind-tree models
for the diffusion of gas molecules.
Both these settings can be studied by restating the problem in terms of properties of the geodesic flow on a flat surface,
i.e. a topological surface equipped with a flat metric with finitely many conical singularities (see the survey \cite{zorichFlatSurfaces2006}). 

It turned out that, in order to study properties of a special flat surface,
it is convenient to study properties of the associated family given by the deforming the flat surface.
The Lyapunov exponents of the original problem for a special flat surface
can be identified with the ones associated to the flat cohomology bundle
over the flat surface associated family. 
It is at this point that the work of Eskin-Kontsevich-Zorich
\cite{eskinSumLyapunovExponents2014}
allowed to compute the sum of the positive Lyapunov exponents in this setting
by relating it to the algebraic degree of a holomorphic vector bundle.

Consequent works
generalized the relation between Lyapunov exponents and degrees
of holomorphic bundles in the case of special flat bundles
coming from families of curves over ball quotients
\cite{kappesLyapunovSpectrumBall2016},
family of K3 surfaces
\cite{filipFamiliesK3Surfaces2018},
and more generally in the case of any flat bundle over a Riemann surface
\cite{eskinLowerBoundsLyapunov2018}.
Daniel-Deroin \cite{danielLyapunovExponentsBrownian2017}
generalized even more this relation to the case of flat bundles over K\"ahler manifolds.
The first author
\cite{costantiniLyapunovExponentsHolomorphic2020}
refined the relation of
\cite{eskinLowerBoundsLyapunov2018}
and proposed to study the Lyapunov exponents as functions on representation varieties
and conjectured an inequality for the gap between Lyapunov
exponents for Hitchin representations.

Hitchin representations are the central objects
of study
in \emph{Higher Teichmüller Theory},
which generalizes the theory of Fuchsian and Kleinian
groups to Lie groups of rank $ \ge 2 $ (see \autoref{sub:anosov_representations}).
They were introduced by Hitchin
\cite{hitchinLieGroupsTeichmuller1992}
as representations in the connected components of character varieties
containing an embedding of the Teichmüller space.
Labourie
\cite{labourieAnosovFlowsSurface2006}
showed that Hitchin representations possess a rich dynamical
structure and in particular they are faithful
and discrete.

Generalizing the rank one case,
various asymptotic quantities
describing the geometry
of a representation have been
studied:
the orbital counting problem
\cite{sambarinoQuantitativePropertiesConvex2014},
critical exponent and
entropies
\cite{potrieEigenvaluesEntropyHitchin2017},
Hausdorff dimension of limit sets
\cite{pozzettiConformalityRobustClass2019}
\cite{glorieuxHausdorffDimensionLimit2019}.
A defining feature of representations in higher rank
is that the relevant notion of \say{size} of a matrix is 
not just a number, the norm,
but the collection of all the singular values,
which form a vector called the
\emph{Cartan projection}
which lives in the Cartan subspace.
The asymptotic geometry of
the Cartan projection of the
image of the representation
is a central subject of investigations.
In the present work we study
the Lyapunov exponents of a Hitchin representation
with respect to a hyperbolic metric on a surface. These characteristic numbers are 
quantities that measure the asymptotic growth of the norm of vectors under parallel transport in the flat bundle associated to the representation, where the parallel transport happens over the geodesic flow defined by the hyperbolic metric.
The Lyapunov exponents define a vector in the Cartan subspace which reflects asymptotic properties of the representation
with respect to the hyperbolic metric.

Let $ S $ be a compact surface of genus $ g \ge 2 $
and $ X $ be a structure of Riemann surface on $ S $.
Given a representation
$ \rho : \pi_1(S) \to \SL(d, \R) $,
we can define the Lyapunov exponents
$ \lambda_{1}(X,\rho) \ge \dots \ge \lambda_{d}(X,\rho) $ 
of $ \rho $ with respect to $ X $ (see \autoref{subsec:Oseledets}).

Computer experiments made by the first author hinted of a gap between the first and the second Lyapunov exponents for Hitchin representations.  In this work, we can show the following more general statement.
\begin{theorem}
	\label{th:main}
	Let $X$ be a structure of Riemann surface on a compact  surface $ S $ of genus $ g \ge 2 $ and
	$ \rho : \pi_1(X) \to \SL (d,\R) $
	be a Hitchin representation.
	Then it holds 
	\begin{equation}
		\lambda_{i}(X,\rho) - \lambda_{i+1}(X,\rho)
		\ge 1
	\end{equation}
 for every $ i = 1, \dots, d-1 $.
 
 Moreover the bound is attained for every  $ i = 1, \dots, d-1 $ if and only if $\rho$ is conjugated
 to the image of the Fuchsian representation
uniformizing $ X $
under the irreducible representation
$ \SL(2, \R) \to \SL(d, \R)$.
\end{theorem}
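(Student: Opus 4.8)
The plan is to reduce the gap $\lambda_i(X,\rho)-\lambda_{i+1}(X,\rho)$ to a comparison between two reparametrizations of the geodesic flow on the unit tangent bundle $T^1X$: the one induced by the hyperbolic metric $X$ and the one induced by the $i$-th simple root of the Hitchin representation $\rho$. Throughout I will use that a Hitchin representation is Borel Anosov, so that the flat bundle $E_\rho$ over $T^1X$ carries a continuous, indeed H\"older, dominated splitting $E_\rho = L_1 \oplus \dots \oplus L_d$ into line fields invariant under parallel transport along the geodesic flow and ordered by their expansion rates. Because $X$ has constant curvature $-1$, the geodesic flow has topological entropy $1$ and its Liouville measure coincides with the Bowen--Margulis measure $\mu_{\mathrm{BM}}$; this is the measure against which the exponents $\lambda_1(X,\rho)\ge\dots\ge\lambda_d(X,\rho)$ are extracted by Oseledets' theorem.

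First I would express the gap dynamically. The exponent $\lambda_j(X,\rho)$ is the $\mu_{\mathrm{BM}}$-integral of the infinitesimal expansion rate of the line field $L_j$ along the flow, so the gap is the integral of the infinitesimal growth of $L_i$ relative to $L_{i+1}$. This infinitesimal quantity is precisely the generator of the H\"older reparametrization $f_{\alpha_i}$ whose period over a closed geodesic $[\gamma]$ is the $i$-th simple-root length $\ell^{\rho}_{\alpha_i}(\gamma)=\log\frac{\mu_i(\gamma)}{\mu_{i+1}(\gamma)}$, where $\mu_1(\gamma)>\dots>\mu_d(\gamma)>0$ are the real, distinct eigenvalues of $\rho(\gamma)$ guaranteed by the Anosov property. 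This is the step that passes through the foliated Lyapunov exponent: I would identify the parallel-transport cocycle over the geodesic flow with a cocycle over a foliation H\"older-isomorphic to the unstable foliation of $T^1X$, along which the splitting and its growth can be controlled, and thereby establish
\begin{equation}
\lambda_i(X,\rho)-\lambda_{i+1}(X,\rho)=\int_{T^1X} f_{\alpha_i}\,d\mu_{\mathrm{BM}}=\mathbf{I}\big(X,\alpha_i\big),
\end{equation}
the intersection number between the hyperbolic reparametrization and the $\alpha_i$-reparametrization of $\rho$.

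With this identity in hand the bound follows from two rigidity inputs. The renormalized intersection product of Bridgeman--Canary--Labourie--Sambarino satisfies $\mathbf{J}(X,\alpha_i)=h_{\alpha_i}(\rho)\cdot\mathbf{I}(X,\alpha_i)\ge 1$, with equality if and only if the hyperbolic length $\ell_X$ and the simple-root length $\ell^{\rho}_{\alpha_i}$ are proportional; here $h_{\alpha_i}(\rho)$ is the critical exponent of the $i$-th simple-root length and $h_X=1$. The entropy rigidity theorem of Potrie--Sambarino for Hitchin representations gives $h_{\alpha_i}(\rho)\le 1$, with equality if and only if $\rho$ is the image of the uniformizing Fuchsian representation under the irreducible $\SL(2,\R)\to\SL(d,\R)$. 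Combining the two,
\begin{equation}
\lambda_i(X,\rho)-\lambda_{i+1}(X,\rho)=\mathbf{I}(X,\alpha_i)\ge\frac{1}{h_{\alpha_i}(\rho)}\ge 1.
\end{equation}
For the equality case, if some gap equals $1$ then both inequalities are equalities, forcing $h_{\alpha_i}(\rho)=1$ and hence, by Potrie--Sambarino, that $\rho$ is Fuchsian; conversely, for the irreducible composition a direct computation gives $\ell^{\rho}_{\alpha_i}(\gamma)=\ell_X(\gamma)$ for every $[\gamma]$, so $\mathbf{I}(X,\alpha_i)=1$ and every gap is exactly $1$. This yields the stated equivalence for all $i$ simultaneously.

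The main obstacle is the identity of the second paragraph, namely matching the analytically defined Lyapunov exponents, obtained from Oseledets applied to parallel transport against the Liouville measure, with the dynamically defined intersection number built from periods against the Bowen--Margulis measure. Establishing it requires exploiting the Anosov property to produce the dominated splitting, verifying the H\"older regularity needed to pass to the foliated picture and to feed into the thermodynamic formalism, and checking that the infinitesimal growth of $L_i$ relative to $L_{i+1}$ integrates to the simple-root reparametrization. By contrast, once this bridge is built, the inequality and its rigidity are immediate consequences of the already available results of Bridgeman--Canary--Labourie--Sambarino and of Potrie--Sambarino.
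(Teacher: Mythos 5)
Your route to the inequality is essentially the paper's second (thermodynamic) proof: identify the gap $\lambda_i-\lambda_{i+1}$ with the intersection $I(\mathbf{1},f_{\alpha_i})=\int_{T^1X}f_{\alpha_i}\,dv_L$ of the hyperbolic flow with the simple-root reparametrization, then invoke $J\ge 1$ together with the fact that the simple-root entropy equals $1$. (Working with the $i$-th simple root of $\rho$ directly is equivalent to the paper's reduction to $\lambda_1-\lambda_2$ of $\wedge^i\rho$, which is $(1,1,2)$-hyperconvex.) You correctly flag the bridge identity as the main work; the paper carries it out by approximating generic geodesic rays by closed geodesics and comparing eigenvalues with singular values via the Anosov property, which matches your sketch. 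So the inequality part is sound in outline.

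The equality case, however, contains a genuine error. You claim that Potrie--Sambarino give $h_{\alpha_i}(\rho)\le 1$ \emph{with equality if and only if $\rho$ is Fuchsian}, and you deduce rigidity from the chain $I\ge 1/h_{\alpha_i}\ge 1$ being tight. But the simple-root entropy is \emph{identically} $1$ on the Hitchin component (this is Potrie--Sambarino's Theorem B for $\alpha_1$, extended to all simple roots via the $(1,1,2)$-hyperconvexity of $\wedge^i\rho$ in Pozzetti--Sambarino--Wienhard; it is exactly what the paper uses to get $J=I$). The rigidity statement $h\le 2/(d-1)$ with equality iff Fuchsian concerns the \emph{spectral-radius} (or Hilbert) entropy, not the simple-root entropy. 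Consequently $h_{\alpha_i}(\rho)=1$ carries no rigidity information and your second inequality is always an equality, so your argument cannot detect the Fuchsian locus. The correct route, which the paper follows, is through the equality case of $J(\mathbf{1},f_{\alpha_i})\ge 1$: equality forces $f_{\alpha_i}$ to be Livsic cohomologous to $\mathbf{1}$, hence $\log\bigl(\lambda_i(\rho(\gamma))/\lambda_{i+1}(\rho(\gamma))\bigr)=\ell_X(\gamma)$ for every $\gamma$ and every $i$; an elementary computation using $\det\rho(\gamma)=1$ then matches the top-eigenvalue spectrum of $\rho$ with that of $j_X^d$, and one concludes with the Hitchin rigidity theorem that two Hitchin representations with the same top-eigenvalue marked spectrum are conjugate. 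You mention the Livsic-cohomology equality condition in passing but do not use it, and without the final length-spectrum rigidity step the conclusion that $\rho$ is conjugate to $j_X^d$ does not follow.
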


The previous result follows from a more general statement regarding
a specific class of Anosov representations,
the (1,1,2)-hyperconvex Anosov representations.
They are Anosov representations whose action on their limit set
exhibit a form of asymptotic conformality
(see \autoref{sub:anosov_representations}).

\begin{theorem}
	\label{th:main_anosov}
	Let $X$ be a structure of Riemann surface on a  compact surface $ S $ of genus $ g \ge 2 $ and let 
	$ \rho : \pi_1(X) \to \SL(d, \R) $
	be a (1,1,2)-hyperconvex Anosov representation.
	Then it holds
	\begin{equation}
		\lambda_{1}(X,\rho) - \lambda_{2}(X,\rho)
		\ge 1
		.
	\end{equation}
\end{theorem}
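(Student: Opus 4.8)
The plan is to route both exponents through the thermodynamic formalism attached to $\rho$ and to extract the gap from an entropy-rigidity inequality. Throughout, let $m_X$ be the Liouville measure of the hyperbolic geodesic flow on $T^1 X$, which is its measure of maximal entropy, normalized so that the topological entropy is $h_X = 1$, and let $\xi^1, \xi^2 \colon \binf \pi_1(X) \to \P(\R^d)$ be the first two Anosov limit maps of $\rho$. \emph{Step 1 (the gap as a foliated exponent).} I would first use the limit maps to locate the top of the Oseledets decomposition of the flat cocycle: over the forward endpoint $x^+$ of a geodesic the most expanded line is $\xi^1(x^+)$, and the gap $\lambda_1 - \lambda_2$ governs the contraction of the flat holonomy on $\P(\R^d)$ transverse to $\xi^1$. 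The map $\xi^1$ carries the unstable foliation $W^u$ of $T^1 X$ to a foliation of the limit lamination inside the projectivized flat bundle that is Hölder isomorphic to $W^u$, and I would show that the foliated Lyapunov exponent of the flat bundle along this foliation equals $\lambda_1 - \lambda_2$. Here $(1,1,2)$-hyperconvexity enters decisively: by Pozzetti–Sambarino–Wienhard it forces $\xi^1$ to be $C^1$ with tangent direction controlled by $\xi^2$, exactly the regularity needed for the foliated exponent to be well defined and for the tangential contraction rate to be computed by the first simple root. Combined with Birkhoff's theorem for the ergodic measure $m_X$, this gives
\begin{equation}
	\lambda_1(X,\rho) - \lambda_2(X,\rho) = \int f_{\alpha_1}\, dm_X,
\end{equation}
where $f_{\alpha_1} > 0$ is Sambarino's Hölder reparametrization function whose periods along closed geodesics $\gamma$ are $\log|\mu_1(\rho(\gamma))| - \log|\mu_2(\rho(\gamma))|$, the logarithms of the first two moduli of eigenvalues.

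\emph{Step 2 (the variational inequality).} Let $h_{\alpha_1}$ be the entropy of the $\alpha_1$-reparametrized flow, characterized by the vanishing of pressure $P(-h_{\alpha_1} f_{\alpha_1}) = 0$. Evaluating at $m_X$ the supremum in the variational principle yields
\begin{equation}
	0 = \sup_{\mu}\Bigl( h_\mu - h_{\alpha_1}\!\int f_{\alpha_1}\, d\mu \Bigr) \ge 1 - h_{\alpha_1}\!\int f_{\alpha_1}\, dm_X,
\end{equation}
so that $\int f_{\alpha_1}\, dm_X \ge 1/h_{\alpha_1}$. This is precisely the renormalized intersection inequality of Bridgeman–Canary–Labourie–Sambarino comparing the hyperbolic flow with the $\alpha_1$-reparametrized flow, read through the measure $m_X$.

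\emph{Step 3 (entropy rigidity and conclusion).} To finish I would invoke the entropy bound $h_{\alpha_1} \le 1$ for $(1,1,2)$-hyperconvex representations, the analogue for this class of the Potrie–Sambarino rigidity for Hitchin representations and tied to the limit set being a $C^1$ curve. Chaining the two inequalities gives
\begin{equation}
	\lambda_1(X,\rho) - \lambda_2(X,\rho) = \int f_{\alpha_1}\, dm_X \ge \frac{1}{h_{\alpha_1}} \ge 1,
\end{equation}
which is the claim. Moreover equality forces $h_{\alpha_1} = 1$ and $m_X$ to be the equilibrium state of $-f_{\alpha_1}$, i.e. $f_{\alpha_1}$ cohomologous to a constant, which by the rigidity part recovers the uniformizing Fuchsian representation and feeds the equality statement of Theorem \ref{th:main}.

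\emph{Main obstacle.} The hard part is Step 1: rigorously upgrading the abstract Oseledets gap $\lambda_1 - \lambda_2$, defined through parallel transport over the flow against $m_X$, into the first-root integral $\int f_{\alpha_1}\, dm_X$ that the thermodynamic formalism can handle. This demands controlling the limited ($C^1$ along leaves, Hölder transversally) regularity of $\xi^1$, verifying that the foliated exponent along the $\xi^1$-image of $W^u$ genuinely computes the transverse contraction rate, and matching the invariant measures on the two Hölder-isomorphic foliated flows. The $(1,1,2)$-hyperconvexity hypothesis supplies exactly the regularity that makes this matching possible, which is why the statement is confined to that class.
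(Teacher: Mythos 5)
Your proposal is essentially the paper's second proof: it establishes the identity $\lambda_1(X,\rho)-\lambda_2(X,\rho)=\int f_{\rho}\,dv_L=J(\mathbf{1},f_\rho)$ (the paper's Proposition \ref{prop:J_relation}, proved via Birkhoff plus the closing lemma and an eigenvalue/singular-value comparison), then applies the Bridgeman--Canary--Labourie--Sambarino bound $J\ge 1$ (your variational-principle derivation in Step 2 is exactly how that bound is proved) together with the Pozzetti--Sambarino--Wienhard entropy fact $h_{f_\rho}=1$. The only cosmetic difference is that your Step 1 blends in the transverse/foliated-exponent picture, which the paper develops as a separate, independent proof of the same inequality rather than as the route to the integral formula.
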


We give two proofs of the previous result, both being a consequence of the relation we show between Lyapunov exponents and other characteristic numbers (see \autoref{thm:relations}).
The first proof relies on the study of 
a deformation of the weak unstable
foliation of the geodesic flow on the unitary tangent
bundle of $X$ and it is a consequence of a more general statement about the transverse Lyapunov exponent of this foliation (see
\autoref{sec:foliated_lyapunov_exponent}).
The second proof is more concise but it uses the
machinery of the
thermodynamic formalism
for Anosov representations
developed in 
\cite{bridgemanSimpleRootFlows2017} and \cite{bridgemanPressureMetricAnosov2015} 
 (see \autoref{sec:thermo}). It is however only in this context where we can show that the equalities of \autoref{th:main} characterizes the uniformizing representation.

These two approaches fit naturally
in the perspective that
Tholozan
developed in his note
\cite{tholozanTEICHMULLERGEOMETRYHIGHEST}.
He explains that there is a correspondence
between
\emph{Anosov actions on the circle},
\emph{deformations of the weak unstable foliation
of the geodesic flow},
and
\emph{reparametrizations of the geodesic flow}.
Furthermore this correspondence preserves
the
(appropriately defined)
periods of each object.
The two proofs we provide here
consist of considering
the Anosov action on the circle
given by an Anosov representation
and interpreting it
in one case as a deformation
of the weak unstable foliation
and in the other case
as a reparametrization of the geodesic flow.
It is however interesting to notice that
the two proofs are not a simple translation
of each other.
It is probable that there exists
a third proof using directly the Anosov action on the circle
which would use a random walk discretizing the geodesic flow
and an adapation of  Ledrappier formula
\cite{ledrappierRelationEntreEntropie}
in this context.
\par
\subsection*{Organization}
In \autoref{sec:foliated_lyapunov_exponent} we define the transverse Lyapunov exponent associated to a foliation H\"older isomorphic to the unstable foliation on $T^1X$ and prove the main bound for this quantity.

In \autoref{sec:thermo} we recall the setting of the thermodynamic formalism
developed by
Bridgeman, Canary, Labourie and Sambarino
in
\cite{bridgemanSimpleRootFlows2017} and \cite{bridgemanPressureMetricAnosov2015}
(see also \cite{bridgemanHessianHausdorffDimension2020}), in particular the main bound about the renormalized intersection.

In \autoref{sec:lyapexp} we recall the main definitions of Lyapunov exponents via Oseledets Theorem and the properties of Anosov and Hitchin representations. We also show some implications that being Anosov has on the Lyapunov exponents.

In \autoref{sec:relationships_between_the_different_lyapunov_exponents} we show the relation between Lyapunov exponents, the foliated Lyapunov exponent and the theormodynamic formalism. We finally prove the main results \autoref{th:main} and \autoref{th:main_anosov}.

\par
\subsection*{Acknowledgments}
We would like to thank Bertrand Deroin, Jérémy Daniel and Nicolas Tholozan for key insights and precious remarks. We would also like to thank the University of Frankfurt and CIRM for their hospitality, since an important part of this work has been developed there.

The first author has been supported by the DFG Research Training Group 2553.

\section{Transverse Lyapunov exponent}
\label{sec:foliated_lyapunov_exponent}

In this section we provide a general setting to study a new invariant measuring the growth of the holonomy of a reparametrization of the weak unstable foliation of the geodesic flow on a Riemann surface. We call this invariant the \emph{transverse Lyapunov exponent} and we will relate it to the usual Lyapunov exponents in the case where the reparametrization of the foliation is induced by a $(1,1,2)$-hyperconvex Anosov representation.

\subsection{Setup and notation}
\label{subsec:notation_transverse}

Let $S$ be a compact surface and $X$ be a Riemann surface structure on $S$. 
We denote by $ T^{1} X $ the unitary tangent bundle of $ X $, by $\pi:T^1X\to X$ the projection and by $ (\Psi_{t}) $ the geodesic flow on $T^1X$. We also consider the 
the weak unstable foliation $\FF_{u}$ of the geodesic flow on $T^{1} X$.
We finally equip $ T^{1} X $ with the Liouville volume form $ v_{L} $, normalized to be a probability measure.

From now on we say that a function is $C^{k+\holder}$ if it has  continuous derivatives up through order $k$ and such that the $k$-th partial derivatives are H\"older continuous for some exponent $\beta$, where $0 < \beta \leq 1$.

Let now $M$ be a $ C^{1+\holder} $-manifold which is H\"older isomorphic to $T^{1} X $. Let us call $ \Xi:T^{1} X\overset{\cong}{\rightarrow} M $ the isomorphism and assume that $ \Xi$ is $C^1$ along the leaves of $\FF_{u}$. Denote by $\FF_{u}^M$ the foliation on $M$ induced by $ \Xi$ and assume that this foliation is  $ C^{1+\holder} $. Denote also by
$ \Psi^M_t = \Xi \circ \Psi_t \circ \Xi^{-1} $
the flow induced on  $ M$. 
Note that 
$\FF_{u}^M$ is by definition  the weak unstable foliation of $ \Psi^M_t$, and this flow  is Hölder and $C^1$ along the leaves
of $ \FF_{u}^M $.

\begin{remark}
    This setting may seem arbitrary
    but it is
     exactly the setting we are in
    when we deform the conformal class
    of the weak unstable foliation
    of $ \Psi_t$
    on
    $T^1 X$,
    c.f. \cite{tholozanTEICHMULLERGEOMETRYHIGHEST}.
\end{remark}

Let us finally define a volume form $\nu$ on $M$ in the following way. 
Since the foliation is transversly oriented, we can fix an arbitrary
$1$-form $\alpha $ on $ M$ which defines the foliation,
meaning that its kernel defines the tangent bundle of $\FF_{u}^M$.
The form $\alpha$ induces a metric on the normal bundle of $ \FF_{u}^M $ and
 we define a volume form by $ \nu = \alpha \wedge \pi^{M,*}\omega_{P} $,
where $ \omega_{P} $ is the Poincaré metric on the surface $ X $ and $\pi^M:M\to X $ the projection $\pi^M:=\pi\circ\Xi^{-1}$.
Up to normalizing $ \alpha $
we can assume that $ \nu $ induces a probability measure. 

\begin{remark}
\label{rem:leafwise measure}
Note that that $ T^{1} X $ is equipped with the Liouville measure
$ v_{L} $ 
and $ M $ with the volume form $ \nu $.
The measures $ \nu $ and $ \Xi_{*} v_{L} $ 
are in general mutually singular, but the disintegrations of $ \nu $ and $ \Xi_{*} v_{L} $ along the leaves
of $\FF_{u}^M $ 
are both equal to the Poincaré leafwise measure $\pi^{M,*}\omega_P$.
\end{remark}

\subsection{The definition of the transverse Lyapunov exponent}

We will now define an invariant of the flow
$ \Psi^M_t $ and the foliation $\FF_{u}^M $,
which we call the \emph{transverse Lyapunov exponent}, which measures the asymptotic growth of the norm transverse to $\FF_{u}^M $  of vectors under the flow.

In order to define these characteristic numbers, we would like to  apply directly
ergodic theory machinery, but the subtlety here is
that the measure $ \nu $
is not preserved by the flow.
We hence use the Liouville measure
as an accessory, and then
show that we can compute
the foliated Lyapunov exponent
for almost every point
with respect to $ \nu $.

Given a path $ c:[0,t] \to M $ contained in a leaf, we define
$ \abs{D\hol(c ) }_\alpha $ as the norm of the derivative of the holonomy
of this path along the foliation $\FF_{u}^M $. More precisely, the derivative of the holonomy induces a linear map between
the one-dimensional fibers of the normal bundle $ N_{\FF_{u}^M ,c(0)} $ 
and $ N_{\FF_{u}^M ,c(t)} $, and we define
$ \abs{D\hol(c ) }_\alpha $ as the norm of this linear
map for the metric on $ N_{\FF_{u}^M } $ induced by $ \alpha $.
Note that
$ \abs{D \hol(c)}_\alpha $
depends on our choice of $ \alpha $.

The local expression of
$ \abs{D\hol(c) }_\alpha $ can be described in the following way.
Suppose that the image of $ c $
is contained in a chart $ V $
which trivializes the foliation, i.e.
$ V \simeq U \times I $
where $ U $ is a disk in $ \HH^{2} $
and $ I $ is an interval.
In this chart the measure
$ \nu $ can be written as 
$ f_\alpha(z,x) \omega_{P}(dz)dx $, so 
\begin{equation}
\label{eq:local_hol}
\abs{D\hol(c) }_\alpha
=f_\alpha(c(t))/f_\alpha(c(0))
.
\end{equation}

Let
$ \Psi^M_{[0,t]} (x) $
be the path
$ s \mapsto \Psi^M_s(x) $
defined on $ [0,t] $.

\begin{theorem}
	\label{th:def_chi}
	There exists a number $ \lambda_{T} $ such that for
	every leaf $ L $ of $ \FF_{u}^M $ and for
 $\pi^{M,*}\omega_P$
 almost every $ x \in L $
	we have
	\begin{equation}
		\lim_{t \to +\infty}
		\frac{1}{t}
		\log
		\left|
			D\hol\left(\Psi^M_{[0,t]}(x)\right)
		\right|_\alpha
		=
		\lambda_{T}
		.
	\end{equation}
\end{theorem}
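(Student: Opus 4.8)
The plan is to recognize the leafwise holonomy growth as an additive cocycle over the flow and to apply a continuous-time Birkhoff theorem, using the flow-invariant Liouville measure as the accessory announced before the statement. Set $\phi_t(x) := \log\left|D\hol\left(\Psi^M_{[0,t]}(x)\right)\right|_\alpha$. Concatenating the path $\Psi^M_{[0,s]}(x)$ with $\Psi^M_{[0,t]}\left(\Psi^M_s(x)\right)$ and using that the derivative of holonomy is multiplicative under composition of paths gives the cocycle identity
\begin{equation}
\phi_{t+s}(x) = \phi_s(x) + \phi_t\left(\Psi^M_s(x)\right).
\end{equation}
By the local expression \eqref{eq:local_hol}, in a foliated chart $\phi_t(x) = \log f_\alpha\left(\Psi^M_t(x)\right) - \log f_\alpha(x)$, so $\phi_t$ admits a well-defined infinitesimal generator $\psi(x) = \frac{d}{dt}\big|_{t=0}\phi_t(x)$, the instantaneous transverse expansion rate. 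Since $\Psi^M_t$ is $C^1$ along the leaves of $\FF_u^M$ and the foliation is $C^{1+\holder}$, this generator is continuous, hence bounded on the compact manifold $M$, and $\phi_t(x) = \int_0^t \psi\left(\Psi^M_s(x)\right)\,ds$.

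The accessory measure $\Xi_* v_L$ on $M$ is $\Psi^M_t$-invariant, because $v_L$ is invariant under the geodesic flow and $\Xi$ conjugates the two flows, and it is ergodic, ergodicity being preserved by the measurable conjugacy. As $\psi$ is continuous it is $\Xi_* v_L$-integrable, so the Birkhoff ergodic theorem for flows produces a set $G$ of full $\Xi_* v_L$-measure on which
\begin{equation}
\lim_{t\to+\infty}\frac1t\,\phi_t(x) = \lim_{t\to+\infty}\frac1t\int_0^t \psi\left(\Psi^M_s(x)\right)\,ds = \int_M \psi\,d(\Xi_* v_L) =: \lambda_T .
\end{equation}
This defines the constant $\lambda_T$. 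By \autoref{rem:leafwise measure} the disintegration of $\Xi_* v_L$ along the leaves of $\FF_u^M$ is the Poincaré leafwise measure $\pi^{M,*}\omega_P$, so $G$ meets $\Xi_* v_L$-almost every leaf in a subset of full leafwise measure.

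The main obstacle is upgrading this from \emph{almost every} leaf to \emph{every} leaf; here I would transport the problem to $T^1 X$ through $\Xi^{-1}$, where $\Psi^M_t$ becomes the genuine geodesic flow and $\FF_u^M$ its weak unstable foliation. This transport is essential precisely because $\Xi$ is only H\"older: the uniform hyperbolicity and the absolute continuity of the strong stable holonomy that the argument needs are available for the actual geodesic flow but not obviously on $M$. On $T^1X$ the limit of the pulled-back forward Birkhoff averages is both flow-invariant and invariant under sliding along strong stable manifolds, since two points on a common strong stable manifold have forward orbits that converge and the (continuous, hence uniformly continuous) generator has coinciding averages along them. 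The direction transverse to the weak unstable foliation is exactly the strong stable direction, so the holonomy matching two nearby weak unstable leaves is the strong stable holonomy, which is absolutely continuous with bounded Jacobian. Thus the good set is saturated for this holonomy and the holonomy preserves leafwise-null sets; connecting an arbitrary leaf through the local product structure to a nearby leaf carrying full-measure convergence, transporting the statement by absolute continuity, and covering by countably many product charts yields convergence to $\lambda_T$ on every weak unstable leaf of $T^1X$. Pushing forward by $\Xi$, which maps each leaf diffeomorphically and carries the leafwise Poincaré measure to the leafwise Poincaré measure by \autoref{rem:leafwise measure}, returns the conclusion on every leaf of $\FF_u^M$.

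I expect the delicate points to be the regularity bookkeeping—checking that $\psi$ is genuinely continuous, and not merely measurable, so that the generator is integrable despite the low regularity of $\Xi$—and, for the every-leaf upgrade, the absolute continuity of the strong stable holonomy on $T^1X$ together with the global chaining of product charts required to reach every leaf rather than almost every one.
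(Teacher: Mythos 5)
Your overall strategy matches the paper's: use the Liouville measure (transported by $\Xi$) as the accessory invariant ergodic measure to get almost-everywhere convergence from the ergodic theorem, then upgrade to every leaf by showing the good set is saturated by the strong stable foliation and moving between weak unstable leaves via the strong stable holonomy. (The paper implements this last step as the first return map of the horocyclic flow from one good leaf $L_0$ onto an arbitrary leaf $L$, which is smooth and surjective; your chaining through local product charts accomplishes the same thing.)

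The one step I would not accept as written is the existence and continuity of the infinitesimal generator $\psi(x)=\frac{d}{dt}\big|_{t=0}\phi_t(x)$. The transverse form $\alpha$ must have kernel equal to $T\FF_{u}^M$, which for a $C^{1+\holder}$ foliation is only a H\"older distribution, so $\alpha$ and hence the local density $f_\alpha$ are only H\"older; nothing in the hypotheses lets you differentiate $t\mapsto\log f_\alpha(\Psi^M_t(x))$ at $t=0$, so $\psi$ need not exist pointwise, let alone be continuous, and the representation $\phi_t=\int_0^t\psi\circ\Psi^M_s\,ds$ is unavailable. This is repairable without changing your architecture: work with the additive cocycle at integer times, $H_n(x)=\sum_{k=0}^{n-1}H_1(\Psi^M_k x)$, where $H_1$ is continuous (hence bounded and uniformly continuous) on the compact $M$; apply Birkhoff to $H_1$ over the time-one map and interpolate using the uniform bound on $\phi_t$ for $t\in[0,1]$. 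In the saturation step, replace ``coinciding averages of the generator'' by the Ces\`aro estimate
\begin{equation}
\frac{1}{n}\abs{H_n(x)-H_n(y)}\le\frac{1}{n}\sum_{k=0}^{n-1}\abs{H_1(x_k)-H_1(y_k)}\longrightarrow 0,
\end{equation}
which needs only uniform continuity of $H_1$ and $d(x_k,y_k)\to 0$ along the strong stable leaf. (The paper instead uses a H\"older bound on $H_1$ together with summability of $d(x_k,y_k)^\beta$; your uniform-continuity version, once restated for $H_1$ rather than $\psi$, is actually slightly more economical.) With that substitution the argument is correct and essentially the paper's proof.
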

We call the number $ \lambda_{T} $ the \emph{transverse Lyapunov exponent}. 

\begin{remark}
\label{rmk:recurrence}
The transverse Lyapunov exponent $ \lambda_{T} $ is independent of the transverse form $\alpha$. Indeed, since $M$ is compact, any two norms are uniformly bounded away from each other. 

\end{remark}

\begin{proof}
	For any $ x \in T^{1} X $,
	we set $ H_{t} (x) =
	\log
	\abs{
		D\hol(\Psi^M_{[0,t]}(\Xi(x)))
	}_\alpha
	$.
	This is a cocycle over
	the geodesic flow on $ T^{1} X $.
	As the geodesic flow on $ T^{1} X $ is ergodic with
	respect to the Liouville measure, there exists
	a number $ \lambda_{T} $ such that, for $ v_{L} $-almost
	every $ x \in T^{1} X $, we have
	\begin{equation}
		\lim_{t \to +\infty}
		\frac{1}{t}
		H_{t} (x)
		=
		\lambda_{T}
		.
	\end{equation}
	Let us denote by $ G $ the set of \say{good points}, that is
	those for which the above equality holds.
	We know that it is a set of full $ v_{L} $ measure,
	and it is invariant under the flow.
	We are going to show that for \emph{every} leaf $ L $
	of $ \FF_{u} $,
	the set $ G \cap L $ is of full measure. This implies the main statement, as $ \Xi $ maps leaves of $ \FF_{u} $ to  leaves
	of $ \FF_{u}^M $ and by \autoref{rem:leafwise measure} it maps full measure subsets to full measure
	subsets.
 
	First, it is clear that there exists one leaf $ L_{0} $ for which this is true.
	Then we are going to show that $ G $ is a set foliated
	by the \emph{strong stable} foliation $ \FF_{ss} $,
	that is the
	1-dimensional foliation
	such that a leaf through a point $ x $ is the set of $ y $'s
	such that $ d(\Psi_{t}(x), \Psi_{t}(y)) $
	goes to $ 0 $ exponentially fast. This will imply the result. Indeed, starting from the leaf
	$ L_{0} $ such that $ G \cap L_{0} $ is of full measure,
	consider another leaf $ L $ of $ \FF_{u} $. Consider
	the first return map of the horocyclic flow $ L_{0} \to L $.
	Because the strong stable and unstable foliation are transverse
	to each other, this is a smooth map and it is surjective.
	It sends the set of full measure $ L_{0} \cap G $ to a set of full
	measure which is included in $ L \cap G $.



	Let's finally show that the set $ G $ is foliated by the strong stable foliation.
	The holonomy of the foliation $ \FF_{u}^M $ is $ C^{1+\holder} $.
	In particular, the derivative of the holonomy along a geodesic
	of length $ 1 $ is \emph{uniformly} $ \beta$-Hölder,
	for some $ \beta > 0 $. Let us denote by  $ \norm{\cdot}_{C^{\beta}} $
    the $\beta$-Hölder norm
    \[\norm{f}_{C^{\beta}}:=\sup_{x,y\in T^1X}\frac{|f(x)-f(y)|}{d(x,y)^\beta}\]
	where $f:T^1X\to \R$ and $d(\cdot,\cdot)$ is a distance in $T^1X$ (any two distances are equivalent since $T^1X$ is compact).
	Then, by compactness of $ M$,
	there exists a constant $ C > 0 $ such that
	\begin{equation}
		\norm{
			x \mapsto
			H_{1} (x)
		}_{C^{\beta}}
	\le C.
	\end{equation}
    
	Let $ x $ and $ y $
	be in the same leaf of the
	strong stable foliation.
	We show now that $ x $ and $ y $
	belong to $G$.
	We denote $ z_{k} = \Psi_{k} (z) $.
	We have then
	\begin{align}
		\norm{H_{n}(x)
		-	
		H_{n}(y)}_{C^{\beta}}
		&\leq 
		\sum_{k=0}^{n-1}	
		\norm{H_{1}(x_{k})
		-	
		H_{1}(y_{k})}_{C^{\beta}}
		\le
		C
		\sum_{k=0}^{n-1}
		d(x_{k}, y_{k})^{\beta} \le
		C
		\sum_{k=0}^{\infty}
		d(x_{k}, y_{k})^{\beta} 
		,
	\end{align}
	and the last sum is convergent because
	$ x_{k} $ and $ y_{k} $ become exponentially
	close.
	This implies that $\lim_{n\to \infty} H_{n}(x)/n=\lim_{n\to \infty} H_{n}(y)/n$, which is what we wanted to show.
\end{proof}

The conclusion of the theorem
implies that
the limit defining $ \lambda_{T} $
exists for $ \nu $-almost every
$ x \in M $,
because $ \Xi_{*} v_{L} $ and $ \nu $
are absolutely continuous along the leaves.

The manifold
$ M $
is a foliated fiber bundle $\pi^M:M\to X$, where $\pi^M=\pi\circ\Xi^{-1}$. Hence,
the transverse Lyapunov exponent
can be expressed using the fibered structure.
The flow $ \Psi^M_t $
induces a projective transformation
between fibers
$ \Psi^M_{t,x} :
M_{\pi^M(x)}
\to
M_{\pi^M\left(\Psi^M_{t}(x)\right)}
$ 
for each $ x \in M $.
For each
$ \zeta \in
M_{\pi^M(x)}$,
we denote by
$ D_{\zeta} \Psi^M_{t,x} $
the derivative
of
$ \Psi^M_{t,x} $
at the point $ \zeta $.
Then we have
\begin{equation}
\label{eq:transverse}
\lambda_{T}=\lim_{t \to +\infty} \frac{1}{t}\log\norm{D_{\zeta}\Psi^M_{t,x}}
\end{equation}
for almost every $ x $ in every leaf,
where $ \zeta $ is the point in
$ M_{\pi^M(x)} $ corresponding
to $ x $
and for any choice of norm
on the tangent bundle to the fibers.
This follows from the
the fact that
the holonomy along
paths of the form
$ \Psi^M_{[0,t]}(x) $
is given by
the maps
$ \Psi^M_{t,x} $
between fibers
and the tangent bundle
to the fibers are identified
with the normal bundle
of the foliation.

\subsection{Bound on the transverse Lyapunov exponent}
\label{subsec:bound_on_the_foliated_lyapunov_exponent}

We will now show the main estimate for the transverse Lyapunov exponent.

\begin{theorem}
	\label{th:bound_chi_fol}
	The transverse Lyapunov exponent
	satisfies
	$ \lambda_{T} \le -1 $.
\end{theorem}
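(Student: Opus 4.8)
The plan is to read $\lambda_{T}$ as a genuine (strong-stable) Lyapunov exponent of $\Psi^M_t$ and to pin it down by comparison with the metric entropy, exploiting that $\Xi$ deforms only the transverse direction while leaving the leafwise geodesic dynamics intact. Write $\mu:=\Xi_{*}v_{L}$ for the invariant probability measure on $M$; since $\Psi^M_t=\Xi\circ\Psi_t\circ\Xi^{-1}$, the map $\Xi$ is a measurable isomorphism between $(\Psi_t,v_{L})$ on $T^{1}X$ and $(\Psi^M_t,\mu)$ on $M$. First I would identify $\lambda_{T}$ with the Lyapunov exponent of $\Psi^M_t$ in the one-dimensional direction transverse to $\FF_{u}^M$. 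By \eqref{eq:transverse} the cocycle $H_t$ is exactly the logarithm of the derivative of the flow along the normal bundle of $\FF_{u}^M$, i.e. along the strong-stable direction; because $\FF_{u}^M$ is $C^{1+\holder}$ this derivative cocycle is well defined (and Hölder), and Birkhoff's theorem applied to the invariant measure $\mu$ gives $\lambda_{T}=\int_{M}H_{1}\,d\mu$. As this direction is transverse to the weak unstable foliation it is contracted by the forward flow, so $\lambda_{T}<0$ and it is the \emph{only} negative exponent: the remaining two exponents are the positive leafwise (strong-unstable) expansion and the zero flow direction.

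Next I would compute the entropy. Metric entropy is an invariant of measurable isomorphism, so $h_{\mu}(\Psi^M_1)=h_{v_{L}}(\Psi_1)$. For the geodesic flow of the hyperbolic surface $X$ (curvature normalised to $-1$) the Liouville measure is the measure of maximal entropy and Pesin's formula gives $h_{v_{L}}(\Psi_1)=1$. Hence $h_{\mu}(\Psi^M_1)=1$. Note that this value is obtained entirely on the smooth side $(\Psi_t,v_{L})$ and then transported, so no regularity of $\Psi^M_t$ is used here.

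Finally I would invoke the Margulis--Ruelle inequality for the time-reversed flow. Entropy is insensitive to time reversal, $h_{\mu}(\Psi^M_{-1})=h_{\mu}(\Psi^M_{1})=1$, while the exponents of $\Psi^M_{-1}$ are the negatives of those of $\Psi^M_{1}$; by the first paragraph its unique positive exponent is $-\lambda_{T}$. Ruelle's inequality then reads $1=h_{\mu}(\Psi^M_{-1})\le -\lambda_{T}$, that is $\lambda_{T}\le -1$. Heuristically this is the statement that preservation of the total Liouville volume forces the transverse direction to contract at measure-theoretic rate exactly $-1$, while the genuine exponent $\lambda_{T}$ can only be \emph{more} negative, the deficit being precisely the Pesin gap created by the transverse non-smoothness of $\Xi$; equality should single out the case where $\Xi$ is transversally smooth.

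The hard part will be the regularity bookkeeping in the last step. Since $\Psi^M_t$ is only Hölder transverse to $\FF_{u}^M$, the Margulis--Ruelle inequality cannot be quoted verbatim for a $C^{1}$ flow. What rescues the argument is that the only direction relevant for the reverse-flow inequality is the transverse one, where the derivative cocycle is furnished by the $C^{1+\holder}$ holonomy of $\FF_{u}^M$ rather than by differentiating $\Psi^M_t$ in ambient charts. I would therefore have to verify that Ruelle's volume-growth/covering estimate bounding entropy by the expansion goes through using only this transverse holonomy derivative; as the expanding direction for the reverse flow is one-dimensional, this reduces to a one-dimensional distortion estimate for the $C^{1+\holder}$ holonomy, which I expect to be the technical heart of the proof.
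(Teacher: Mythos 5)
Your route is genuinely different from the paper's: you transport the entropy $h_{v_L}(\Psi_1)=1$ through the measurable conjugacy $\Xi$ and try to conclude via the Margulis--Ruelle inequality for the time-reversed flow, whose unique candidate positive exponent is $-\lambda_T$. The arithmetic is right ($1=h_\mu(\Psi^M_{-1})\le\max(0,-\lambda_T)$ would indeed force $\lambda_T\le-1$), and the identification of $\lambda_T$ with $\int H_1\,d\mu$ for the invariant measure $\mu=\Xi_*v_L$ is fine. But the step you defer to the end is not ``regularity bookkeeping''; it is the whole difficulty, and as stated it is a genuine gap. Ruelle's inequality is a theorem about $C^1$ maps: its proof linearizes the map on small balls and counts how many partition elements the image of a ball can meet using the differential. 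For a homeomorphism that is merely H\"older there is no such bound at all --- entropy of a H\"older conjugate of a smooth system cannot in general be estimated by any partial derivative data. What you would actually need is a foliated (skew-product) version of Ruelle's inequality in which the expanding direction of the reversed flow is controlled by the $C^{1+\mathrm{H\ddot{o}lder}}$ transverse holonomy while the leafwise maps depend only H\"older-continuously on the transverse parameter; proving that the covering counts in Ruelle's argument survive this loss of joint regularity is a nontrivial theorem you have not supplied, not a distortion estimate. (Your preliminary assertion that $\lambda_T<0$ ``because the direction is transverse to the unstable foliation'' is also essentially the conclusion rather than an input, though your final inequality does not actually use it.)

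The paper avoids entropy theory entirely and the contrast is instructive. It computes the Jacobian of the \emph{non-invariant} measure $\nu=\alpha\wedge\pi^{M,*}\omega_P$ under the flow (Lemma \ref{lemma:flowmeasure}): the leafwise factor is exactly $e^{t}$ because $(\Psi^{M,\zeta}_t)^*\omega_P=e^t\omega_P$, and the transverse factor is $\abs{D\hol\Psi^M_{[0,t]}}$. Since $\Psi^M_t$ is a bijection, the total mass of $(\Psi^M_{-t})_*\nu$ is still $1$, and Jensen's inequality applied to the logarithm of the Radon--Nikodym derivative gives
\begin{equation}
\int_M\bigl(t+\log\abs{D\hol\Psi^M_{[0,t]}(x)}\bigr)\,\nu(dx)\le 0 ,
\end{equation}
after which one divides by $t$ and invokes the $\nu$-almost-everywhere convergence of $\tfrac1t\log\abs{D\hol\Psi^M_{[0,t]}}$ to $\lambda_T$ (this is exactly why Theorem \ref{th:def_chi} is proved leafwise, since $\nu$ and $\Xi_*v_L$ are mutually singular but have the same leafwise disintegrations). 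This uses only measure-theoretic facts already established and the $C^1$ leafwise structure, with no analogue of Ruelle's inequality. If you want to salvage your argument, the honest task is to prove the Ruelle-type bound for this class of foliated H\"older flows; short of that, the Jensen/Jacobian argument is the elementary substitute.
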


The previous theorem is a consequence of the
following lemma which explains
how the measure
$ \nu $
is transformed by the flow.

\begin{lemma}
\label{lemma:flowmeasure}
	The transformation of the measure $ \nu $ under the flow on $M$
	is given by 
	\begin{equation}
		(\Psi^M_{-t})_{*}
	\nu(dx)
	=
	e^{t}
	\abs{D \hol \Psi^M_{[0,t]} (x)}
	\nu (dx)
	\end{equation}
 	for any positive $t$.
\end{lemma}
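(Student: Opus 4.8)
The plan is to verify this identity of measures locally, in a foliation chart, since it is an identity of Radon--Nikodym derivatives and hence local in nature. Fix a chart $V \simeq U \times I$ trivializing $\FF_{u}^M$, with $U$ a disk in $\HH^{2}$ carrying the leaf coordinate $z$ and $I$ the transverse coordinate $x$, so that the leaves are the slices $U \times \{x\}$ and $\nu = f_\alpha(z,x)\,\omega_P(dz)\,dx$ as in \eqref{eq:local_hol}. Since $\FF_{u}^M$ is the weak unstable foliation of $\Psi^M_t$, the flow preserves each leaf, so in this chart it has the triangular form $\Psi^M_t(z,x) = (\phi^x_t(z), x)$, where $\phi^x_t$ is the restriction of the flow to the leaf at transverse level $x$. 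The two factors in the claimed formula will come respectively from the leafwise action of $\phi^x_t$ on the Poincaré measure and from the transverse variation of the density $f_\alpha$.

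The key geometric input is that $\phi^x_t$ expands the leafwise Poincaré measure by $e^t$. Indeed, by \autoref{rem:leafwise measure} the disintegration of $\nu$ along the leaves is the Poincaré leafwise measure, and the map $\Xi$, being $C^1$ along the leaves and satisfying $\Xi^*(\pi^{M,*}\omega_P) = \pi^*\omega_P$, conjugates the leafwise action of $\Psi^M_t$ to that of the geodesic flow $\Psi_t$ while preserving the leafwise Poincaré measure. It therefore suffices to compute the leafwise Jacobian of $\Psi_t$ on $T^1X$. Lifting to the universal cover and identifying a weak unstable leaf with $\HH^{2}$ through the footpoint projection, with fixed backward endpoint $\xi \in \binf \HH^{2}$, we take $\xi = \infty$ in the upper half-plane model; then the geodesic flow acts on footpoints by $(a,b) \mapsto (a, b e^{-t})$, so $\Psi_t^*\omega_P = e^t\,\omega_P$ along the leaf. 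Hence $\phi^x_t$ multiplies the leafwise Poincaré measure by $e^t$.

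It remains to assemble the computation. Pairing $(\Psi^M_{-t})_{*}\nu$ against a test function $g$ and using $\Psi^M_{-t}(z,x) = (\phi^x_{-t}(z), x)$ gives an integral of $g(\phi^x_{-t}(z),x)\, f_\alpha(z,x)$ against $\omega_P(dz)\,dx$; substituting $z = \phi^x_t(u)$ in the leaf direction turns $\omega_P(dz)$ into $e^t\,\omega_P(du)$ by the previous paragraph and replaces the argument of $g$ by $(u,x)$, producing the factor $e^t f_\alpha(\phi^x_t(u),x)$. Since $\phi^x_t(u)$ is the endpoint of the leafwise path $\Psi^M_{[0,t]}(u,x)$, the ratio $f_\alpha(\phi^x_t(u),x)/f_\alpha(u,x)$ is exactly $\abs{D\hol\Psi^M_{[0,t]}(u,x)}_\alpha$ by \eqref{eq:local_hol}, and collecting terms yields $(\Psi^M_{-t})_{*}\nu(dx) = e^t\,\abs{D\hol\Psi^M_{[0,t]}(x)}\,\nu(dx)$ in the chart. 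As both $e^t$ and the holonomy derivative are intrinsic, the local identities patch to the global one. The main obstacle is the leafwise Jacobian computation, i.e. pinning down the constant $e^t$ through the conjugacy $\Xi$ and the identification of the leafwise measure with the Poincaré measure; once the leafwise and transverse contributions are correctly separated, the remainder is a change of variables.
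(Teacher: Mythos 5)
Your proof is correct and follows essentially the same route as the paper's: a local computation in a foliation chart, splitting the Jacobian into the leafwise factor $e^{t}$ (from $(\Psi_t^{M,\zeta})^{*}\omega_P = e^{t}\omega_P$, which you additionally justify by the explicit upper half-plane computation) and the transverse factor given by the local expression \eqref{eq:local_hol} of the holonomy derivative. The only cosmetic difference is that you pair against test functions where the paper evaluates on flow-stable product sets.
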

\begin{proof}
	First of all note that, since $ \Psi^M_{t}$ is not smooth, we cannot work directly with forms and we can only argue with measures.
	Let now $ A $ be a set contained
	in a foliation chart
	$ V \simeq U \times I $ stable under
	$ \Psi^M_{t} $
	and such that in this chart
	$ A \simeq U' \times I' $.
	To prove the result
	it is enough to
	compute
	$
	(\Psi^M_{-t})_{*}
	\nu
	(A)
	$
	for all such $ A $'s, since
	such sets cover
	$ M $.

	On every plaque $ U \times \left\{ \zeta \right\} $,
	the flow $ \Psi^M_{t} $ induces a flow
	$ \Psi_{t}^{M,\zeta} $
	and we have
	$ \Psi^M_{t} (U' \times I')=
	\left\{ (\Psi_{t}^{M,\zeta}(z), \zeta);
		(z, \zeta) \in U' \times I'
	\right\}
	$.
	Recall that the measure $ \nu $
	in the chart $ V $ can be written as 
	$ f_\alpha(z, \zeta) \omega_{P}(dz)d\zeta $.
	Because $ \Psi_{t} $ is conjugated to the geodesic flow
	on $ T^{1} X $ by $ \Xi $,
	and $ \Xi $ 
	is smooth and measure preserving along the leaves,
	we have
	\[ (\Psi_{t}^{M,\zeta})^{*} \omega_{P} = e^{t} \omega_{P} .\]
	We can finally compute
	\begin{align}
		(\Psi^M_{-t})_{*}
		\nu
		(A)
		& =
		\nu ( \Psi^M_{t}(A))
		=
		\int_{\Psi^M_{t}(U' \times I')}
		f_\alpha(z, \zeta)
		\omega_{P}
		(dz)d\zeta
		\\
		&=
		\int_{I'} 
		\int_{\Psi_{t}^{M,\zeta}(U')} 
		f_\alpha(z, \zeta)
		\omega_{P} (dz) d\zeta
		\\
		&=
		\int_{I'} 
		\int_{U'} 
		f_\alpha(\Psi_{t}^{M,\zeta}(z), \zeta)
		e^{t}
		\omega_{P} (dz) d\zeta
		\\
		&=
		\int_{I'} 
		\int_{U'} 
		\frac{
			f_\alpha(\Psi_{t}^{M,\zeta}(z), \zeta)
		}
		{
			f_\alpha(z, \zeta)
		}
		e^{t}
		f_\alpha(z, \zeta)
		\omega_{P} (dz) d\zeta
		\\
		&=
		\int_{U' \times I'} 
		e^{t} 
		\abs{
			D \hol
			\Psi^M_{[0,t]}
			(z,\zeta)
		}
		f_\alpha(z, \zeta)
		\omega_{P}(dz)
		d\zeta
		,
	\end{align}
	where the last equality follows from the local expression \eqref{eq:local_hol} of the norm of the holonomy. This is exactly what we wanted to prove.
\end{proof}

We can prove now prove
\autoref{th:bound_chi_fol} about the main estimate for the transverse Lyapunov exponent.
The main idea is that the flow $\Psi^M_t$ is expanding along the leaves of $\FF_{u}^M$
while the total volume of
$ \nu $ remains constant.
\begin{proof}
	[Proof of theorem \ref{th:bound_chi_fol}]
	Using \autoref{lemma:flowmeasure} we obtain
	\begin{equation}
		1
		=
		\int_{M} 
		\nu
		=
		\int_{M} 
		(\Psi^M_{-t})_{*} \nu
		=
		\int_{M} 
		e^{t}
		\abs{D \hol \Psi^M_{[0,t]} (x)}
		\nu (dx)
		.
	\end{equation}
	Applying the logarithm function
	and using Jensen's inequality
	we get
	\begin{equation}
		\int_{M}
		( t +
		\log
		\abs{D \hol \Psi^M_{[0,t]} (x)}
		)
		\nu(dx)
		\le 0.
	\end{equation}
	Finally we can divide by $ t $ and obtain
	\begin{equation}
		1 +
		\int_{M} 
		\frac{1}{t}
		\log
		\abs{D \hol \Psi^M_{[0,t]} (x)}
		\nu(dx)
		\le 0
		.
	\end{equation}
	Now, by theorem \ref{th:def_chi} the integrand
	converges to $ \lambda_{T} $
	for $ \nu $-almost every $ x $
	and it is bounded uniformly in $ t $,
    by continuity of the holonomy.
	By the dominated convergence
	theorem, the integral converges then
	to $ \lambda_{T} $ and so we have the desired estimate
	\begin{equation}
		1 + \lambda_{T}
		\le 0.
	\end{equation}
\end{proof}

\section{Thermodynamic formalism}
\label{sec:thermo}

In this section, we recall the setting of the thermodynamic formalism
developed by
Bridgeman, Canary, Labourie and Sambarino
in
\cite{bridgemanPressureMetricAnosov2015} and 
\cite{bridgemanSimpleRootFlows2017}
(see also \cite{bridgemanHessianHausdorffDimension2020}).
In the next section, we will relate it to Lyapunov exponents and use it to prove Theorem
\ref{th:main}.

This thermodynamic formalism
is a machinery which allows
to encode quantities associated
to Anosov representations
using reparametrisations
of the geodesic flow.

We recall now the notions and results we will need for our purposes,
and refer to
\cite{bridgemanPressureMetricAnosov2015}
for all the precise definitions and proofs.

Let as above $X$ be a Riemann surface, $ T^{1} X $ be its  unit tangent bundle and $\Psi_t $ be the geodesic flow 
on it (note that in
\cite{bridgemanPressureMetricAnosov2015} and
\cite{bridgemanSimpleRootFlows2017},
they work with the more general
geodesic flow of the group $ \pi_1(X) $).
We denote by $ O $ the set of periodic
orbits of $ \Psi_t $ and, for
any $ a \in O$, we denote by
$ p(a) $ the period of $ a $.

Given a positive Hölder continuous
function $ f $ on $ T^{1} X $,
there exists a reparametrisation $ \Psi^{f} $
of the flow $ \Psi $ such that,
for every periodic orbit
$ a \in O $,
the period of $a$ for
the flow $ \Psi^{f} $
is given by
\begin{equation}
	p_{f} (a)
	=
	\int_{0}^{p(a)}
	f(\Psi_s(x))
	ds,
\end{equation}
where $ x $ is any point in $ a $.
The flow $ \Psi^{f} $ is only
Hölder continuous but it
is a Metric Anosov flow
(see \cite[sec. 3.2]{bridgemanPressureMetricAnosov2015}).

Denote by $ R_{T}(f) $
the set
$
\left\{ 
	a \in O
	\tq
	p_{f}(a)
	\le
	T
\right\}
$.
The topological
entropy
of the flow $ \Psi^{f} $
is given by
\begin{equation}
	h_{f}
	=
	\lim
	_{T \to +\infty}
	\frac{1}{T}
	\log
	\#
	R_{T}(f)
	,
\end{equation}
and it is finite and positive.
The unique probability
measure
of maximal entropy
$ \mu_{f}  $ 
for $ \Psi^{f} $
is given by
\begin{equation}
	\mu_{f}
	=
	\lim
	_{T \to +\infty}
	\frac{1}{\# R_{T}(f)}
	\sum
	_{a \in R_{T}(f)} 
	\delta_{a}^{f}
	,
\end{equation}
where $ \delta_{a}^{f} $
is the probality measure
supported on $ a $
and invariant by
the flow $ \Psi^{f} $.

Given  two
positive Hölder continuous 
functions $f$ and $g $ on $ T^{1} X $,
we define their intersection
$ I(f,g) $ by
\begin{equation}
	I(f,g)
	=
	\int_{T^1X}
	\frac{g}{f}
	d \mu_{f} 
	,
\end{equation}
and their renormalized
intersection
$ J(f,g) $ by
\begin{equation}
\label{eq:renint}
	J(f,g)
	=
	\frac{h_{g} }{h_{f} }
	I(f,g)
	.
\end{equation}
The Hessian of the renormalized intersection
was used in
\cite{bridgemanPressureMetricAnosov2015}
to define a pressure form
on the set of pressure zero
Hölder functions,
generalizing the Weil-Peterson form.

We finally recall an important estimate of the renormalized intersection.
Recall that $f$
and $g$ are Livsic cohomologous
when the flows
$ \Psi^f $
and
$ \Psi^g $
are Hölder conjugate.
In that case these two
flows have the same periods.
\begin{proposition}\cite[Prop. 3.8]{bridgemanPressureMetricAnosov2015})]
\label{prop:bound_thermo}
    The renormalized intersection
satifisfies the lower bound:
\begin{equation}
	J(f,g) \ge 1,
\end{equation}
with equality if and only if
$ h_{f} f $
and
$ h_{g} g $
are Livsic cohomologous.
\end{proposition}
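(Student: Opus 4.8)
The plan is to reduce the statement to the variational principle and the uniqueness of equilibrium states for the metric Anosov flow $\Psi$, as provided by the thermodynamic formalism of \cite{bridgemanPressureMetricAnosov2015}. Write $P(\phi)$ for the topological pressure of a Hölder function $\phi$ with respect to $\Psi_t$, and $h(\Psi,m)$ for the metric entropy of a $\Psi$-invariant probability measure $m$, so that the variational principle reads $P(\phi)=\sup_m\big(h(\Psi,m)+\int\phi\,dm\big)$. The entropy $h_f$ of the reparametrised flow is characterised as the unique positive number with $P(-h_f f)=0$, and the measure of maximal entropy $\mu_f$ of $\Psi^f$ corresponds, via the reparametrisation, to the $\Psi$-invariant equilibrium state $m_f$ of the potential $-h_f f$; concretely $d\mu_f = f\,dm_f\,/\int f\,dm_f$, which is consistent with Abramov's formula $h(\Psi^f,\mu_f)=h(\Psi,m_f)/\int f\,dm_f = h_f$.

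First I would rewrite the intersection in terms of $m_f$. Substituting the density above gives
\begin{equation}
I(f,g)=\int_{T^1X}\frac{g}{f}\,d\mu_f=\frac{\int_{T^1X} g\,dm_f}{\int_{T^1X} f\,dm_f},
\end{equation}
so that $J(f,g)=\dfrac{h_g}{h_f}\cdot\dfrac{\int g\,dm_f}{\int f\,dm_f}$. Next I would apply the variational inequalities. Because $m_f$ is the equilibrium state of $-h_f f$ we have the exact identity $h(\Psi,m_f)=h_f\int f\,dm_f$, while the variational principle applied to the potential $-h_g g$ (whose pressure is also zero) evaluated at the particular measure $m_f$ gives the inequality $h(\Psi,m_f)\le h_g\int g\,dm_f$. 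Combining the two yields $h_f\int f\,dm_f\le h_g\int g\,dm_f$, which is precisely $J(f,g)\ge 1$.

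For the equality case, $J(f,g)=1$ forces $h(\Psi,m_f)=h_g\int g\,dm_f$, that is, $m_f$ is itself an equilibrium state for $-h_g g$. By uniqueness of equilibrium states for Hölder potentials over the metric Anosov flow $\Psi$, this means $m_f=m_g$, and two Hölder potentials share an equilibrium state exactly when they differ by a constant plus a coboundary: $-h_f f=-h_g g + c + (\text{coboundary})$. Taking pressures and using $P(-h_ff)=P(-h_gg)=0$ together with $P(\phi+c)=P(\phi)+c$ and the invariance of pressure under coboundaries forces $c=0$; hence $h_f f$ and $h_g g$ are Livsic cohomologous. Conversely, cohomologous potentials have equal periods and equal equilibrium data, which retraces the computation above with equality throughout, giving $J(f,g)=1$.

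The main obstacle I anticipate is not the algebra above but justifying that the full apparatus of thermodynamic formalism — existence, uniqueness and variational characterisation of equilibrium states, together with the Livsic cohomology theorem — is available for the flow $\Psi^f$, which is only Hölder continuous rather than smooth. This is exactly where one must invoke the metric Anosov (Smale flow) structure and its Markov coding established in \cite{bridgemanPressureMetricAnosov2015}, transferring the classical symbolic thermodynamic formalism to this non-smooth setting.
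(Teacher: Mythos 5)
Your argument is correct and is essentially the argument behind the cited result: the paper itself gives no proof of this proposition but imports it from \cite{bridgemanPressureMetricAnosov2015}, whose Proposition 3.8 is proved exactly by rewriting $I(f,g)$ via the equilibrium state $m_f$ of $-h_f f$, applying the variational principle to the zero-pressure potential $-h_g g$ at $m_f$, and handling equality through uniqueness of equilibrium states plus the Livšic-type characterisation of potentials sharing an equilibrium state. Your closing caveat is also the right one; the transfer of the symbolic thermodynamic formalism to the Hölder reparametrised flow via the metric Anosov structure and Markov coding is precisely what that reference supplies.
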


\section{Lyapunov exponents and Anosov and Hitchin representations}
\label{sec:lyapexp}

In this section we recall the definition of Lyapunov exponents associated to a representation of the fundamental group of a Riemann surface and investigate the special cases of Anosov and, more specially, Hitchin representations.

\subsection{Oseledets theorem and Lyapunov exponents.}\label{subsec:Oseledets}

Let $X$ be a compact Riemann surface of genus greater than one and $T^1X$ be its unit cotangent bundle equipped with the Liouville probability measure. Let moreover $\rho:\pi_1(X)\to \SL(d, \R) $ be a representation and $E_\rho$ be the associated flat vector bundle over $T^1X$, i.e. the quotient of
$ T^1 \tilde{X} \times \R^{d} $
by the diagonal action of
$ \pi_1(X) $,
acting on the second factor by
$\rho$.
The geodesic flow $ \Psi_{t} $  on $ T^{1} X $
induces a flow $ \ti{\Psi}_{t} $  on $ E_{\rho} $ by parallel transport.
We finally equip $E_{\rho}$ with an arbitrary
measurable norm $\norm{\cdot}$ (here measurable, and in particular defined up to measure zero sets, is enough since, similarly to \autoref{rmk:recurrence}, we can use the Poincaré recurrence Theorem to work on a compact subset where the norm is defined).

We define now the Lyapunov exponents of $\rho$ with respect to $X$ by applying the theorem of Oseledets
(see e.g. \cite{arnoldRandomDynamicalSystems1998})
to the linear flow
$\ti{\Psi}_{t}$ lying over the ergodic flow
$ \Psi_{t} $.
\begin{theorem}[Oseledets]
    \label{th:oseledets}
    There exist real constants $\tilde{\lambda}_1> \dots > \tilde{\lambda}_r$ and a decomposition 
	\[E_{\rho}=\bigoplus_{i=1}^r E_{\rho}^{i}\]
	by measurable real vector bundles such that for a.e. $x\in T^1X$ and all $v\in (E_{\rho}^{i})_x\setminus \{0\}$, it holds
\[	\tilde{\lambda}_{i}=\pm\lim_{t \to +\infty} \frac{1}{t}\log
	\norm{\ti{\Psi}_{\pm t} v}.\]
 Moreover, for all $ i \neq j $, we have
\begin{equation}
    \label{eq:oseledets_distance}
	\lim_{t \to +\infty}
	\frac{1}{t}
	\log
	d(
	(E^{i}_\rho)_{\Psi_{t} x},
	(E^{j}_\rho)_{\Psi_{t} x}
	)
	=
	0
	,
\end{equation}
where $ d $ is the Hausdorff
distance on the compact subset
of the projective space.
\end{theorem}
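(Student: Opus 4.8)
This is the multiplicative ergodic theorem of Oseledets applied to the linear flow $ \ti{\Psi}_t $ over the geodesic flow, so the plan is to cast parallel transport as a linear cocycle, check the single integrability hypothesis, invoke the (two-sided) theorem, and then extract the two features specific to our formulation: the constancy of the exponents and the subexponential separation \eqref{eq:oseledets_distance}.

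First I would fix a compact fundamental domain for the $ \pi_1(X) $-action on $ T^1\ti{X} $ and trivialize $ E_\rho $ over it; reading $ \ti{\Psi}_t $ in this trivialization produces a measurable linear cocycle $ A:\R\times T^1X\to \SL(d,\R) $ over $ \Psi_t $. Since $ T^1X $ is compact and parallel transport for unit time has uniformly bounded operator norm, both $ \log^+\norm{A(1,\cdot)} $ and $ \log^+\norm{A(1,\cdot)^{-1}} $ are bounded, hence lie in $ L^1(v_L) $; this is the only hypothesis the theorem requires. The freedom in the choice of measurable norm is harmless: restricting by Poincaré recurrence to a compact set where any two admissible norms are continuous, hence comparable, the resulting transfer function grows subexponentially along almost every orbit and therefore does not affect the limits below.

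Because the cocycle is invertible (parallel transport along the time-reversed geodesic) and integrable, the two-sided multiplicative ergodic theorem applies and yields a $ \Psi_t $-invariant set of full $ v_L $-measure, measurable functions $ x\mapsto \tilde{\lambda}_i(x) $ and $ x\mapsto r(x) $, and a $ \ti{\Psi}_t $-equivariant measurable splitting $ E_\rho=\bigoplus_{i} E^i_\rho $ for which $ \lim_{t\to+\infty}\frac1t\log\norm{\ti{\Psi}_t v}=\tilde{\lambda}_i(x) $ and $ \lim_{t\to+\infty}\frac1t\log\norm{\ti{\Psi}_{-t}v}=-\tilde{\lambda}_i(x) $ whenever $ v\in(E^i_\rho)_x\setminus\{0\} $; this is exactly the $ \pm $ formulation of the statement, and it is the invertibility that upgrades a mere filtration to a genuine splitting. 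The functions $ \tilde{\lambda}_i $ and $ r $ are flow-invariant and measurable, so by ergodicity of the geodesic flow for the Liouville measure (it is a contact Anosov flow) they are almost everywhere equal to constants $ \tilde{\lambda}_1>\dots>\tilde{\lambda}_r $, giving the asserted numbers.

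The remaining point, equation \eqref{eq:oseledets_distance}, is the one I expect to require the most care, since it goes beyond quoting the bare existence of the splitting: it is the assertion that distinct Oseledets subspaces remain subexponentially far apart along the orbit, i.e. that the splitting is \emph{tempered}. I would deduce it from Lyapunov regularity of almost every point. At a regular point the exponential growth rate of the volume of a parallelepiped equals the sum of the exponents of its edges; applying this to edges chosen in $ (E^i_\rho)_{\Psi_t x} $ and $ (E^j_\rho)_{\Psi_t x} $ shows that the $ \sin $ of the angle between these subspaces — which records exactly the discrepancy between that volume and the product of the edge lengths — can neither grow nor decay exponentially. Taking $ \frac1t\log $ and using that the individual edge lengths already grow at their prescribed rates forces the angle to have subexponential asymptotics, and rewriting the angle as the Hausdorff distance on the relevant compact piece of projective space gives \eqref{eq:oseledets_distance} and completes the proof.
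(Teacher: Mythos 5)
Your proposal is correct, but note that the paper does not actually prove this statement: it is quoted as the classical multiplicative ergodic theorem with a pointer to Arnold's book, so there is no internal proof to compare against. Your write-up is a sound account of what that citation is standing in for: the reduction of parallel transport to an integrable $\SL(d,\R)$-cocycle (integrability being automatic from cocompactness of the quotient and the uniform bound on unit-time transport), the two-sided theorem to get a genuine splitting rather than a filtration, Hopf ergodicity of the Liouville measure to make $r$ and the $\tilde{\lambda}_i$ constant, and the dismissal of the choice of measurable norm by Poincar\'e recurrence exactly as in the paper's own parenthetical remark. Your treatment of \eqref{eq:oseledets_distance} via volume growth at Lyapunov-regular points is the standard route to temperedness of the splitting; the only detail worth spelling out is the passage from the minimal principal angle (which is what the volume ratio controls from below, since the volume ratio is the product of the sines of all principal angles, each at most $1$) to the Hausdorff distance on projective space, which is bounded above by the diameter and below by that minimal angle for subspaces of equal dimension --- and in the paper's actual application the subspaces $E^{1}_\rho$ and $E^{2}_\rho$ are lines, where the two notions coincide. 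No gap; your argument proves exactly the version stated.
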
 

	The set of values $\lambda_i=\lambda_i(X,\rho)$, for $i=1,\dots,d$, obtained by considering the values $\tilde{\lambda}_i  $ \emph{repeated with multiplicity} $\dim E_\rho^{i}$, is called the set of \emph{Lyapunov exponents} or Lyapunov spectrum of $(X,\rho)$. Note that the Lyapunov exponents are independent of the choice of the norm function on $E_{\rho}$.
 \begin{remark}
 \label{rmk:lyap_def}
Lyapunov exponents can be equivalently defined as
\begin{equation}
    \lambda_i
    =
    \lim_{t \to +\infty}
    \frac{1}{t}
    \log
	\sigma_i ( \ti{\Psi}_{t} )
 ,
\end{equation}
where
$ \sigma_i $
are the singular values
(defined in
\autoref{sub:anosov_representations}).  We will use this characterization when we will relate Lyapunov exponents to other quantities in \autoref{sec:relationships_between_the_different_lyapunov_exponents}.
 \end{remark}

We finally recall that we can associate  two flags to the decomposition of $E_\rho$,
\emph{the forward flag}  $
F_\rho^{1}\subset
F_\rho^{2} \subset
\cdots
\subset
F_\rho^{r}
$ and the \emph{backward flag}
$
B_\rho^{1}\subset
B_\rho^{2} \subset
\cdots
\subset
B_\rho^{r}
$ 
defined by
\begin{itemize}
	\item 
 $ F_\rho^{i} = E_\rho^{r+1-i} \oplus \cdots \oplus E_\rho^{r} $,
	\item
	$ B_\rho^{i} =
		E_\rho^{1} \oplus \cdots \oplus E_\rho^{i} $.
\end{itemize}
These measurable bundles satisfy the following properties. For almost any $x\in T^1X$ it holds:
\begin{itemize}
	\item $ (B^{i}_\rho)_{x} \cap (F^{d+1-i}_\rho)_{x}
		= (E^{i}_\rho)_{x} $
		,
	\item 		$
		\lim_{t \to +\infty} 
		\frac{1}{t}
		\log
		\norm{
			\ti{\Psi}_{t} v
		}
		=
		\tilde{\lambda}_{r+1-i}
		$
		if and only if
		$ v \in (F^{i}_\rho)_{x} \setminus (F^{i-1}_\rho)_{x} $ 
		,
	\item 		$
		\lim_{t \to + \infty} 
		\frac{1}{t}
		\log
		\norm{
			\ti{\Psi}_{-t} v
		}
		=
		-
		\tilde{\lambda}_{i}
		$
		if and only if
		$ v \in (B^{i}_\rho)_{x} \setminus (B^{i-1}_\rho)_{x} $
		.
\end{itemize}
Note that all the measurable bundles $E_\rho^{i}$, $B^i$ anf $F^i$ are equivariant with respect to the action of $ \ti{\Psi}_{t} $.
We say that a point $ x \in T^{1} X $ is called \emph{regular} is it is a point for which the previous properties hold.

\subsection{Anosov and Hitchin representations}
\label{sub:anosov_representations}

We consider now special representations $\rho:\pi_1(X)\to \SL(d,\R)$.
The notion of Anosov representations
of fundamental groups
of hyperbolic manifolds
has been
introduced by Labourie
in
\cite{labourieAnosovFlowsSurface2006}.
Since then,
it has been generalized
to general hyperbolic groups
and different equivalent
definitions have been found
\cite{guichardAnosovRepresentationsDomains2012},
\cite{gueritaudAnosovRepresentationsProper2017},
\cite{kapovichAnosovSubgroupsDynamical2017},
\cite{kapovichMorseActionsDiscrete2014a},
\cite{bochiAnosovRepresentationsDominated2019}.
Here we state a definition
adapted to what is needed in the following.

Let $ \abs{\cdot} $
be word metric
on $ \pi_1(X) $
associated to the
choice of an arbitrary
symmetric generating set.
Fix a Euclidean norm
$ \norm{\cdot}$
on
$\SL(d, \R)$.
For a matrix $ g \in \SL(d, \R) $,
denote by
$ \sigma_{1}(g) \ge \dots \ge \sigma_{d}(g) $
its singular values, i.e. the eigenvalues of $ \sqrt{g^{*} g} $
defined using this norm.
Remark that
$  \norm{g} = \sigma_1 (g)$
and
$  \norm{\wedge^k g} = \sigma_1 (g) + \cdots + \sigma_k (g)$,
where
$ \wedge^k g$
is the automorphism
of the exterior power
$ \wedge^k \R^d$
induced by
$g$.

The following definition
is from
\cite{bochiAnosovRepresentationsDominated2019}.
For any $p=1,\dots,d$, we say that a representation
$ \rho : \pi_1 (X) \to \SL(d, \R) $ 
is  \emph{$p$-Anosov} if there exist
constants $ C,\lambda > 0 $
such that
\begin{equation}
	\frac
	{\sigma_{p+1}(\rho(\gamma))}
	{\sigma_{p}(\rho(\gamma))}
	\le
	C e^{-\lambda \abs{\gamma}} 
	,
\end{equation}
for every
$ \gamma $ 
in
$\pi_1(X)$.
This property is independent
of the word metric chosen.
Note that by substituting $\gamma^{-1}$ to $\gamma$ in the previous expression, it is easy to show that a representation
is $ p $-Anosov
if and only if it is
$ (d-p) $-Anosov
and that
$\wedge^p \rho$
is $1$-Anosov
if and only if $\rho$ is
$p$-Anosov.

Recall that the boundary $ \binf \pi_1(X) $ of the
group $ \pi_1(X)$ 
is a topological circle
with a Hölder structure.
We recall now one important property of $p$-Anosov representations
.
\begin{theorem}[{\cite[Prop. 4.9]{bochiAnosovRepresentationsDominated2019}}]
\label{thm:boundary_maps}
    Let $ \rho : \pi_1(X) \to \SL(d, \R) $  be a $ p $-Anosov representation. There exist two $ \rho $-equivariant Hölder continuous maps
$ \xi_{p}  : \binf \pi_1(X) \to G_{p} $
and
$ \xi_{d-p}  : \binf \pi_1(X) \to G_{d-p} $,
where $ G_{i} $ is the Grassmanian
of $ i $-planes in $ \R^{d} $, satisfying
\begin{equation}
	\xi_{p} (x)
	\oplus
	\xi_{d-p} (y)
	=
	\R^{d},
\end{equation}
for every
$ x \neq y $
in
$ \binf \pi_1(X)$.
\end{theorem}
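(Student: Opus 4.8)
The plan is to construct $\xi_p$ as a limit of \emph{attracting $p$-planes} along geodesic rays in $\pi_1(X)$, and to build $\xi_{d-p}$ by the same recipe using that $\rho$ is simultaneously $p$-Anosov and $(d-p)$-Anosov. For a matrix $g \in \SL(d,\R)$ with $\sigma_p(g) > \sigma_{p+1}(g)$, write the Cartan decomposition $g = K_g\,\mathrm{diag}(\sigma_1(g),\dots,\sigma_d(g))\,L_g$ with $K_g, L_g \in \mathrm{SO}(d)$, and let $U_p(g) \in G_p$ denote the span of the first $p$ columns of $K_g$, i.e. the top left singular directions. Since $\rho$ is $p$-Anosov, the gap $\sigma_{p+1}(\rho(\gamma))/\sigma_p(\rho(\gamma))$ decays like $C e^{-\lambda \abs{\gamma}}$, so $U_p(\rho(\gamma))$ is well defined for all $\gamma$ with $\abs{\gamma}$ large.

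The key analytic input is a standard contraction estimate on the Grassmannian: if the gap $\sigma_{p+1}(g)/\sigma_p(g)$ is small, then $g$ acts on $G_p$ as a strong contraction toward $U_p(g)$, in the sense that any $p$-plane $V$ staying a definite distance from the repelling $(d-p)$-plane of $g$ satisfies $d(g\cdot V, U_p(g)) \lesssim \sigma_{p+1}(g)/\sigma_p(g)$. Applying this along a geodesic ray $(\gamma_n)_{n\ge 0}$ in $\pi_1(X)$ converging to a boundary point $x \in \binf \pi_1(X)$, the exponential decay of the gap shows that $(U_p(\rho(\gamma_n)))_n$ is Cauchy in $G_p$, hence convergent; I define $\xi_p(x)$ to be its limit. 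Independence of the chosen ray follows from a fellow-traveling argument: two rays converging to the same boundary point stay within bounded Hausdorff distance, and the uniform gap estimate absorbs this bounded discrepancy, forcing the two limits to coincide.

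Equivariance is immediate from the construction: if $\gamma_n \to x$ then $\eta\gamma_n \to \eta\cdot x$, and $U_p(\rho(\eta\gamma_n))$ differs from $\rho(\eta) U_p(\rho(\gamma_n))$ by a vanishing error, so $\xi_p(\eta\cdot x) = \rho(\eta)\xi_p(x)$. For Hölder continuity, I would use that two nearby boundary points $x,y$ are tracked by geodesic rays that fellow-travel up to time $\approx \log(1/d(x,y))$; combining this divergence time with the exponential gap decay yields an estimate of the form $d(\xi_p(x), \xi_p(y)) \lesssim d(x,y)^{\alpha}$ for some $\alpha > 0$. The map $\xi_{d-p}$ is constructed identically using that $\rho$ is $(d-p)$-Anosov, as the limit of the attracting $(d-p)$-planes $U_{d-p}(\rho(\gamma_n))$ along a ray converging to the given boundary point.

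Finally, the transversality relation $\xi_p(x)\oplus\xi_{d-p}(y)=\R^d$ for $x\neq y$ follows from the contraction/expansion dichotomy supplied by the gap estimate. Taking a bi-infinite geodesic with forward endpoint $x$ and backward endpoint $y$, the plane $\xi_{d-p}(y)$ plays the role of the repelling $(d-p)$-plane for the forward dynamics toward $x$, while $\xi_p(x)$ is the attracting $p$-plane, and the uniform exponential gap forces these two complementary-dimensional subspaces to remain transverse in the limit. The main obstacle is the well-definedness package, namely establishing the Grassmannian contraction estimate precisely and then running the Cauchy, independence-of-ray, and Hölder arguments in the Gromov-hyperbolic group $\pi_1(X)$; once these quantitative estimates are in hand, equivariance, the construction of $\xi_{d-p}$, and the transversality statement are comparatively formal.
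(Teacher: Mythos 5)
First, a remark on what you are comparing against: the paper gives no proof of this statement, it is quoted verbatim from Bochi--Potrie--Sambarino, where it is deduced from the theory of dominated splittings. Their route is: the singular-value gap defining $p$-Anosov is equivalent (via the Bochi--Gourmelon criterion) to the existence of a dominated splitting of index $p$ for the linear cocycle over the geodesic flow of the group; the maps $\xi_p,\xi_{d-p}$ are then read off from the two subbundles of that splitting, so that transversality is automatic (a dominated splitting is a direct-sum decomposition at every point of the flow space, and each summand depends only on one endpoint of the geodesic), while the identification of $\xi_p(x)$ with the limit of the Cartan attractors $U_p(\rho(\gamma_n))$ is a separate lemma (their Lemma 4.7, which this paper invokes in the proof of Lemma \ref{lem:spectral_radius}). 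Your proposal takes the other standard route -- building the maps directly as Cauchy limits of Cartan attractors, in the spirit of Gu\'eritaud--Guichard--Kassel--Wienhard and of Benoist--Quint. That route is legitimate and more quantitative/self-contained for existence, equivariance and H\"older continuity (granting the contraction lemma comparing $U_p(gh)$ with $U_p(g)$, which you correctly isolate as the analytic input), but it has to pay for transversality separately, and that is where your sketch breaks down.

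Concretely: along a bi-infinite geodesic $(\gamma_n)_{n\in\Z}$ through the identity with $\gamma_n\to x$ and $\gamma_{-n}\to y$, the repelling $(d-p)$-plane of the matrix $\rho(\gamma_n)$ is $U_{d-p}(\rho(\gamma_n)^{-1})$, and its limit as $n\to+\infty$ is governed by the accumulation points of the sequence $\gamma_n^{-1}$ in $\binf\pi_1(X)$ -- which is in general not $y$ and need not even converge (in a free group the geodesic $\dots,b^2,b,e,a,a^2,\dots$ has $\gamma_n=a^n$, so $\gamma_n^{-1}\to a^{-\infty}$ while $y=b^{\infty}$). So $\xi_{d-p}(y)$ does not ``play the role of the repelling plane for the forward dynamics toward $x$,'' and the contraction/expansion dichotomy does not directly yield $\xi_p(x)\oplus\xi_{d-p}(y)=\R^d$. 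Even after repairing this by choosing a sequence $\eta_n$ with $\eta_n\to x$ and $\eta_n^{-1}\to y$ (possible since $\pi_1(X)$ acts on its boundary as a uniform convergence group), one still needs an argument that the limits of $U_p(\rho(\eta_n))$ and of $U_{d-p}(\rho(\eta_n)^{-1})$ are transverse: for a single matrix $g$ the planes $U_p(g)$ and $U_{d-p}(g^{-1})$ come from the two different orthogonal factors of the Cartan decomposition and are not transverse in general. The usual fixes are either to establish transversality first at pairs of fixed points of infinite-order elements via proximality of $\rho(\gamma)$ (gap for $\gamma^n$ plus an eigenvalue/singular-value comparison as in Lemma \ref{lem:spectral_radius}) and then extend by density together with a uniformity or cocompactness argument on the space of distinct pairs, or to go through the dominated splitting as Bochi--Potrie--Sambarino do. This step is the genuinely nontrivial content of the theorem and is not, as claimed, ``comparatively formal.''
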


The maps $\xi_{i}$  are called the \emph{boundary maps} of $ \rho $.

We are now able to recall the definition of hyperconvex representations following \cite{pozzettiConformalityRobustClass2019}. These representations are the ones for which the bound of \autoref{th:main_anosov} holds.

\begin{definition}
\label{def:hyperconvex}
Let  $ \rho:\pi_1(X) \to \SL(d, \R)$ be both $ 1 $-Anosov and $ 2 $-Anosov.
We say that $ \rho $
is $ (1,1,2) $-hyperconvex
if for every pairwise
distinct
$ x,y,z $
in
$ \binf \pi_1(X) $ it holds
\begin{equation}
	(\xi_{1}(x) \oplus \xi_{1}(y))
	\cap
	\xi_{d-2}(z)
	=
	\left\{ 0 \right\}
	,
\end{equation}
\end{definition}
Remark that we always have
$ \xi_{1}(x) \subset \xi_{2}(x) $
when $ \rho $ is
1-Anosov and 2-Anosov
\cite{guichardAnosovRepresentationsDomains2012}.
The main property of $ (1,1,2) $-hyperconvex that we will use in order to relate  the Lyapunov exponents  given by Oseldets theorem to the transverse Lyapunov exponents and to the thermodynamic formalism is the following.

\begin{theorem}[{\cite[Prop. 7.4]{pozzettiConformalityRobustClass2019}}]
\label{thm:hyperconvex}
   Let $ \rho:\pi_1(X) \to \SL(d, \R)$ be a 
$ (1,1,2) $-hyperconvex representation. Then
the image
$ \xi_{1}(\binf \pi_1(X)) $
is a $ C^{1+\holder} $
manifold
and its tangent space
at $ \xi_{1}(x) $ can be described as
\begin{equation}
	T_{\xi_{1} (x)} 
	\xi_{1}
	(\binf \pi_1(X))
	=
	T_{\xi_{1} (x) }
	\P \xi_{2}(x)
	.
\end{equation}
\end{theorem}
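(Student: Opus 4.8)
The plan is to show that, in a neighbourhood of each point, the image $\Lambda := \xi_{1}(\binf \pi_1(X))$ is the graph of a $C^{1+\holder}$ function whose derivative vanishes at the base point in a suitable chart; this is exactly the assertion that $\Lambda$ is a $C^{1+\holder}$ submanifold with $T_{\xi_1(x)}\Lambda = T_{\xi_1(x)}\P(\xi_2(x))$. Fix $x \in \binf\pi_1(X)$ and an auxiliary point $w \neq x$. By \autoref{thm:boundary_maps} with $p=2$ we have $\xi_2(x) \oplus \xi_{d-2}(w) = \R^d$, so there is a linear projection $\pi_{x,w} \colon \R^d \to \xi_2(x)$ with kernel $\xi_{d-2}(w)$. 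Using the nesting $\xi_{d-2}(w) \subset \xi_{d-1}(w)$ (the dual of the inclusion $\xi_1 \subset \xi_2$) together with the transversality $\xi_1(y) \oplus \xi_{d-1}(w) = \R^d$ from \autoref{thm:boundary_maps} with $p=1$, the projection $\pi_{x,w}$ is injective on every line $\xi_1(y)$ with $y \neq w$. Hence the shadow map $\sigma_{x,w}(y) := [\pi_{x,w}(\xi_1(y))] \in \P(\xi_2(x))$ is well defined and continuous, with $\sigma_{x,w}(x) = [\xi_1(x)]$ because $\xi_1(x) \subset \xi_2(x)$.

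The first essential use of $(1,1,2)$-hyperconvexity is to show that $\sigma_{x,w}$ is injective: if $\sigma_{x,w}(y_1) = \sigma_{x,w}(y_2)$ for distinct $y_1, y_2 \neq w$, then $\xi_1(y_1)$ and $\xi_1(y_2)$ lie in a common hyperplane containing $\xi_{d-2}(w)$, forcing $\dim\big((\xi_1(y_1) \oplus \xi_1(y_2)) \cap \xi_{d-2}(w)\big) \geq 1$, which contradicts \autoref{def:hyperconvex} applied to the triple $(y_1, y_2, w)$. Being a continuous injection of a circle arc, $\sigma_{x,w}$ is a homeomorphism onto its image near $x$, so $s = \sigma_{x,w}(y) \in \P(\xi_2(x))$ serves as a local coordinate. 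In the splitting $\R^d = \xi_2(x) \oplus \xi_{d-2}(w)$ the curve then becomes the graph $s \mapsto B(s) \in \xi_{d-2}(w)$ of its transverse component over the projective line $\P(\xi_2(x))$, with $B(s_0) = 0$ at $s_0 := [\xi_1(x)]$. The theorem thus reduces to showing that $B$ is $C^{1+\holder}$ and that $DB(s_0) = 0$, the latter being equivalent to $T_{\xi_1(x)}\Lambda = T_{\xi_1(x)}\P(\xi_2(x))$.

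To pin down the tangent I would argue at the attracting fixed points $\gamma^{+}$ of elements $\gamma \in \pi_1(X)$, which are dense in $\binf\pi_1(X)$, and then spread the conclusion by equivariance, density and continuity. Since $\rho$ is $1$- and $2$-Anosov, $\rho(\gamma)$ is biproximal with a top eigenvalue gap $\abs{\lambda_1} > \abs{\lambda_2} > \abs{\lambda_3}$, attracting line $\xi_1(\gamma^{+})$ and $\xi_2(\gamma^{+}) = \ell_1 \oplus \ell_2$ the sum of the top two eigenlines. Taking $x = \gamma^{+}$, $w = \gamma^{-}$ and the orbit $y_n = \gamma^n y_0$ for generic $y_0$, an eigenvalue computation gives $\abs{s_n - s_0} \asymp \abs{\lambda_2/\lambda_1}^n$ while the transverse part satisfies $\abs{B(s_n)} = O(\abs{\lambda_3/\lambda_1}^n)$; hence $B(s_n)/(s_n - s_0) \to 0$, so once $B$ is known to be differentiable we obtain $DB(s_0) = 0$. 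Finally, the Hölder regularity of $x \mapsto \xi_2(x)$ coming from \autoref{thm:boundary_maps} makes the tangent line $\P(\xi_2(x))$ vary in a $\holder$ fashion along $\Lambda$, upgrading $C^1$ to $C^{1+\holder}$.

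The main obstacle is the analytic regularity of the graphing function $B$, i.e. proving that the topological graph produced by the shadow map is genuinely $C^{1+\holder}$. This is where the Anosov dynamics must be used quantitatively: the pair of boundary maps $\xi_2$ and $\xi_{d-2}$ defines a dominated splitting along the curve, and the spectral gap provides an invariant cone field in the $\xi_{d-2}(w)$-direction that is strictly contracted by the flow. One then obtains $C^{1+\holder}$ regularity of $B$ by the standard graph-transform / invariant-section argument for dominated splittings, in the spirit of the $C^{1+\holder}$ regularity of strong unstable foliations, the Hölder exponent being governed by the ratios of the contraction rates. Establishing these distortion estimates uniformly as the three boundary points collide is the technical heart of the proof.
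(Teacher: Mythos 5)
First, a point of order: the paper does not prove this statement at all --- it is quoted, with attribution, from Pozzetti--Sambarino--Wienhard \cite{pozzettiConformalityRobustClass2019}, Prop.~7.4. So there is no internal proof to compare against; I can only assess your argument on its own terms and against the cited source.

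Your setup is sound: the shadow map $\sigma_{x,w}$ onto $\P(\xi_2(x))$ along $\xi_{d-2}(w)$, the dimension count showing that a failure of injectivity of $\sigma_{x,w}$ would force $\dim\bigl((\xi_1(y_1)\oplus\xi_1(y_2))\cap\xi_{d-2}(w)\bigr)\ge 1$ and hence contradict $(1,1,2)$-hyperconvexity, and the eigenvalue computation $\abs{s_n-s_0}\asymp\abs{\lambda_2/\lambda_1}^n$ versus $\abs{B(s_n)}=O(\abs{\lambda_3/\lambda_1}^n)$ at a biproximal fixed point are all correct. The problem is that the entire analytic content of the theorem --- that the graphing function $B$ is differentiable in the first place --- is deferred to a ``standard graph-transform / invariant-section argument for dominated splittings'' that you do not carry out, and your fixed-point computation cannot fill that hole: identifying the derivative on the dense set of attracting fixed points only propagates to all of $\Lambda$ \emph{after} one knows the curve is $C^1$, which is precisely the missing step. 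Moreover, if one did succeed in running a cone-field/graph-transform argument, it would produce the invariant tangent distribution directly, making the fixed-point computation redundant; as written, the two halves of your proof do not combine into a complete argument.

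For comparison, the route taken in \cite{pozzettiConformalityRobustClass2019} is softer and avoids cone fields entirely: hyperconvexity is an open condition on pairwise distinct triples, and the action of $\pi_1(X)$ on the space of distinct triples of $\binf\pi_1(X)$ is cocompact, so one obtains the \emph{uniform} tangency estimate
\begin{equation}
	\lim_{y\to x}\ \frac{d\bigl(\xi_1(y),\,\P\xi_2(x)\bigr)}{d\bigl(\xi_1(y),\,\xi_1(x)\bigr)}=0
	\qquad\text{uniformly in } x
\end{equation}
by a compactness/contradiction argument; this single estimate simultaneously gives differentiability of the image and identifies the tangent as $T_{\xi_1(x)}\P\xi_2(x)$, with no need to pass through fixed points. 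If you want to salvage your approach, the realistic fix is to prove this uniform tangency estimate for your graphing function $B$ (i.e.\ $B(s)=o(\abs{s-s_0})$ uniformly in the base point), rather than to invoke unstable-manifold machinery. Finally, your closing remark that Hölder regularity of $x\mapsto\xi_2(x)$ upgrades $C^1$ to $C^{1+\holder}$ implicitly uses that $\xi_1^{-1}\colon\Lambda\to\binf\pi_1(X)$ is Hölder; this is true for Anosov representations but deserves a sentence, since the paper itself warns that $\xi_1$ is only a Hölder homeomorphism.
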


Note that even if the image of $ \xi_{1} $ 
is $ C^{1+\holder} $, 
the map $ \xi_1 $ itself is only Hölder.

We recall now the definition and properties of Hitchin representations, which are a special instance of Anosov representations.
Consider the connected component of the character variety
of representations
$ \pi_1(X) \to \PSL(d, \R) $ containing the Fuchsian representations
(via the irreducible embeding
$ \PSL(2, \R) \to \PSL(d, \R)) $. This component is called Hitchin component and the representations parametrized by this component are called Hitchin representations. 
They have been
introduced by Hitchin
in
\cite{hitchinLieGroupsTeichmuller1992}
and are central in the study of higher Teichm\"uller theory
(see e.g.
\cite{wienhardInvitationHigherTeichmuller2018}).
A Hitchin representation
$  \pi_1 (X) \to \PSL(d, \R) $
can be lifted to a representation
$ \rho:\pi_1 (X) \to \SL(d, \R) $
and the properties we are
interested in are 
independent of the lift,
so in the following we will
only work with representation
with values in $ \SL(d, \R) $.

We recall now the main properties of Hitchin representation that we will use.
\begin{theorem}
\label{thm:Hitchin}
Let $ \rho:\pi_1(X) \to \SL(d, \R) $ be a Hitchin representation. Then
\begin{itemize}
	\item $\rho $ is $ i $-Anosov, for every $ i=1, \dots, d-1$ (see \cite{labourieAnosovFlowsSurface2006});
	\item 	$ \wedge^{k} \rho:
		\pi_1(X) \to \SL(\wedge^{k} \R^{d}) $
		is  $ (1,1,2) $-hyperconvex for any $ k=1, \dots, d-1 $
		(see \cite[sec. 9.2]{pozzettiConformalityRobustClass2019}).
\end{itemize}
\end{theorem}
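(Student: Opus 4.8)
The plan is to prove the two bullets separately, using the principal Fuchsian representation as a base point and propagating the relevant properties across the (connected) Hitchin component. For the first bullet I would establish the stronger \emph{Borel--Anosov} property, i.e. $p$-Anosov for all $p$ simultaneously, by producing an equivariant hyperconvex flag curve. First I treat the base point $\rho_0 = \iota\circ j$, where $j:\pi_1(X)\to\SL(2,\R)$ is a uniformizing Fuchsian representation and $\iota:\SL(2,\R)\to\SL(d,\R)$ is the irreducible (symmetric power) embedding. Writing the $KAK$ decomposition of $g\in\SL(2,\R)$ with diagonal part $\mathrm{diag}(e^{a},e^{-a})$, the weights of $\mathrm{Sym}^{d-1}(\R^2)$ are $(d-1)a,(d-3)a,\dots,-(d-1)a$, so the singular values of $\iota(g)$ are $e^{(d-1)a}\ge e^{(d-3)a}\ge\cdots\ge e^{-(d-1)a}$ (the maximal compact $\mathrm{SO}(2)$ maps into an orthogonal subgroup, so singular values are not mixed). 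Since $j$ is cocompact Fuchsian, $a_\gamma:=\log\sigma_1(j\gamma)$ is comparable to the word length $\abs{\gamma}$, whence $\sigma_{p+1}(\rho_0\gamma)/\sigma_{p}(\rho_0\gamma)\asymp e^{-2a_\gamma}$ decays exponentially in $\abs{\gamma}$; thus $\rho_0$ is $p$-Anosov for every $p$, and its limit curve is the hyperconvex rational normal curve.

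To pass to an arbitrary Hitchin representation I would argue that the locus of representations admitting an equivariant Hölder hyperconvex Frenet curve $\xi=(\xi^{1}\subset\cdots\subset\xi^{d-1}):\binf\pi_1(X)\to G_1\times\cdots\times G_{d-1}$ --- meaning that $\xi^{n_1}(x_1)\oplus\cdots\oplus\xi^{n_k}(x_k)$ is a direct sum whenever the $x_i$ are distinct and $\sum n_i\le d$, together with the osculating limits as the points collide --- is both open and closed in the Hitchin component. Openness follows from structural stability of the Anosov flow attached to such a curve; closedness follows because directness of the relevant sums is a closed condition that, combined with the osculating relations, cannot degenerate in a uniform limit. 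Since the locus contains $\rho_0$ and the Hitchin component is connected, it is the whole component. The transverse pair $(\xi^{p},\xi^{d-p})$ then realizes the boundary maps of \autoref{thm:boundary_maps} and yields the $p$-Anosov estimate for each $p$.

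For the second bullet I would read off the boundary maps of $\wedge^{k}\rho$ from the Frenet curve via the Plücker embedding: the attracting line of $\wedge^{k}\rho$ at $x$ is $\wedge^{k}\xi^{k}(x)$, and the codimension-two osculating subspace $\xi_{\wedge^{k}\rho}^{D-2}(z)$ (with $D=\binom{d}{k}$) is described through the first two steps of the osculating flag at $z$. The defining condition of \autoref{def:hyperconvex}, namely $(\wedge^{k}\xi^{k}(x)\oplus\wedge^{k}\xi^{k}(y))\cap\xi_{\wedge^{k}\rho}^{D-2}(z)=\{0\}$ for pairwise distinct $x,y,z$, then unwinds into a transversality statement among sums of osculating subspaces of $\xi$, which I would verify directly from the full Frenet directness property (not merely pairwise transversality of distinct flags) together with the coincidence limits governing the three-point configuration.

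The main obstacle is the engine of the whole argument: the existence and Hölder regularity of the hyperconvex Frenet curve for every Hitchin representation, which is a genuinely dynamical input resting on the Anosov property of the geodesic flow and would be the hard part to carry out from scratch. For the second bullet the delicate point is not generic transversality but the precise $(1,1,2)$ configuration: one must control the Plücker bookkeeping and the osculating (coincidence) limits carefully, since it is exactly the collision behaviour of the Frenet curve --- rather than transversality of distinct flags alone --- that forces the relevant intersection to be trivial.
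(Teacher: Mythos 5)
The paper does not actually prove this statement: both bullets are quoted with references, to \cite{labourieAnosovFlowsSurface2006} for the Anosov property and to \cite[sec.\ 9.2]{pozzettiConformalityRobustClass2019} for the hyperconvexity of the exterior powers. Your proposal is essentially a reconstruction of those two proofs: the singular-value computation at the Fuchsian base point, the open--closed propagation of the Frenet curve over the connected Hitchin component, and the Pl\"ucker-embedding bookkeeping for $\wedge^{k}\rho$ are exactly the routes taken in the cited sources. So the overall architecture is the right one.

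There is, however, a concrete error at the one point where you do offer a justification for the hard step. You claim closedness of the locus of representations carrying a hyperconvex Frenet curve ``because directness of the relevant sums is a closed condition.'' It is the opposite: a sum of subspaces of complementary dimensions being direct is an \emph{open} condition on the product of Grassmannians (non-vanishing of a determinant), so a limit of hyperconvex curves can a priori degenerate --- images of distinct points can collide and sums can drop rank. Ruling this out is precisely the content of Labourie's closedness argument: one must first extract a limit curve (equicontinuity coming from the nesting/positivity properties of hyperconvex curves) and then run a genuine analysis of the possible degenerations to show the limit is still hyperconvex. Since you yourself flag the existence and regularity of the Frenet curve as ``the hard part,'' the honest status of your argument is that it reduces the theorem to the cited results rather than proving it, and the one justification supplied for the crux is false as stated. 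A similar, milder caveat applies to the second bullet: the $(1,1,2)$ condition for $\wedge^{k}\rho$ does not follow from directness of sums of distinct osculating flags alone, but from Labourie's property $(H_k)$ --- directness of sums involving intersections of the form $\xi^{k}(x)\cap\xi^{d-k+1}(z)$ --- which is an additional property of Hitchin Frenet curves that must be (and, in the cited reference, is) established separately.
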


\subsection{Oseledets and Anosov representations}

Assume now that
$\rho:\pi_1(X) \to \SL(d, \R) $ is a $p$-Anosov representation.
We explain some consequences that this property has on the Lyapunov exponents given by the Theorem of Oseledets.
First we explain the relationship
between the notion of
dominated splitting
and the Anosov property.

Consider a continous flow
$ (\varphi_{t}) $
on a compact space
$ B $.
Suppose that
this flow lifts
to a flow
$ (\ti{\varphi}_{t}) $
on a bundle
$ E \to B $
over
$ B $ 
.
A splitting
$ E = U \oplus S $
of
$ E $
is said to be
dominated
for
$ (\ti{\varphi}_{t}) $ 
if there exists
constants
$ C,a > 0 $
such that:
\begin{equation}
	\frac
	{\norm{
			\ti{\varphi}_{t} v
	}}
	{\norm{
			v
	}}
	\le
	C
	\frac
	{\norm{
			\ti{\varphi}_{t} w
	}}
	{\norm{
			w
	}}
	e^{-at} 
	,
\end{equation}
for every
$ v \in S $
and
$ w \in U $,
see
\cite{bochiCharacterizationsDomination2009}.
In this case
we say that 
$ (\ti{\varphi}_{t}) $ 
admits a dominated splitting
of index
$\dim U$.

When $\rho$ is
$p$-Anosov,
we can construct such a splitting
of the bundle
$E_{\rho}$.
First we construct
a splitting of the bundle
$ T^{1} \ti{X} \times \R^{d} $:
to a point
$x \in T^{1} \ti{X}$
we associated the splitting
\begin{equation}
	\xi_{p}(x_{-\infty})
	\oplus
	\xi_{d-p}(x_{+\infty})
\end{equation}
of the fiber $ \R^d $.
By equivariance of
$ \xi_{p} $
and
$ \xi_{d-p} $,
this splitting descends
to a splitting
of
$ E_{\rho} $
that we denote
by
$ U \oplus S $.
Observe that this
splitting is invariant
under the lift
$ \ti{\Psi}_{t} $ of the geodesic flow on $T^1X$.
According to
\cite[prop. 4.6, prop. 4.9]{bochiAnosovRepresentationsDominated2019},
$ \rho $ being $ p $-Anosov
is equivalent to this splitting
being dominated for
$ (\ti{\Psi}_{t} ) $.
In particular
$ (\ti{\Psi}_{t}) $
is dominated of index
$p$.

We are now able to show how the property of the dominated splitting of $E_\rho$ implies inequalities between Lyapunov exponents.

\begin{proposition}
\label{prop:symplicity}
    Let $\rho:\pi_1(X) \to \SL(d, \R) $ be $p$-Anosov. Then 
    \[\lambda_{i} > \lambda_{i+1} \]
    for $i=p,d-p$.
     Equivalently 
     \[\dim E_{\rho}^{p}=\dim E_{\rho}^{d-p}= 1.\]
\end{proposition}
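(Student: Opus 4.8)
The plan is to read the two spectral gaps directly off the dominated splitting that has just been attached to a $p$-Anosov representation. Recall that, because $\rho$ is $p$-Anosov, the bundle $E_\rho$ carries a $\ti{\Psi}_t$-invariant splitting $E_\rho = U \oplus S$ with $\dim U = p$ which is dominated of index $p$: there are constants $C,a > 0$ with $\norm{\ti{\Psi}_t v}/\norm{v} \le C\,\norm{\ti{\Psi}_t w}/\norm{w}\,e^{-at}$ for all nonzero $v \in S$ and $w \in U$. First I would invoke \autoref{th:oseledets} to work on a full-measure set of regular points where all forward exponents $\lim_{t \to +\infty}\frac1t\log\norm{\ti{\Psi}_t u}$ exist, and then feed the domination inequality into this. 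Taking logarithms, dividing by $t$ and letting $t \to +\infty$ kills the constant $C$ and yields, at a.e. $x$ and for every nonzero $v \in S_x$, $w \in U_x$, the inequality (forward exponent of $v$) $\le$ (forward exponent of $w$) $- a$. Thus every Lyapunov exponent realised on $S$ is strictly smaller, by at least $a$, than every exponent realised on $U$.

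The next step is bookkeeping. Since each exponent carried by $U$ strictly dominates each exponent carried by $S$, sorting the full list of $d$ exponents (with multiplicity) places the $\dim U = p$ exponents of $U$ in the top slots $\lambda_1 \ge \cdots \ge \lambda_p$ and the $\dim S = d-p$ exponents of $S$ in $\lambda_{p+1} \ge \cdots \ge \lambda_d$. Applying the displayed inequality to the slowest direction in $U$ (exponent $\lambda_p$) against the fastest direction in $S$ (exponent $\lambda_{p+1}$) gives $\lambda_{p+1} \le \lambda_p - a$, i.e. $\lambda_p - \lambda_{p+1} \ge a > 0$, which is the asserted gap at $i = p$. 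By the uniqueness part of Oseledets' theorem, the invariant subbundle $U$ must coincide with the sum of the Oseledets pieces carrying $\lambda_1,\dots,\lambda_p$; together with $\dim U = p$ this strict gap is, in the bundle language of \autoref{th:oseledets}, exactly the claimed relation $\dim E_\rho^{p} = 1$. Since the excerpt records that $\rho$ is $p$-Anosov if and only if it is $(d-p)$-Anosov, the identical argument applied to the index-$(d-p)$ dominated splitting produces $\lambda_{d-p} - \lambda_{d-p+1} \ge a' > 0$ and the corresponding $\dim E_\rho^{d-p} = 1$.

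The main obstacle is the passage from the domination inequality — a uniform-in-$t$ comparison of norms valid simultaneously for all vectors — to a clean comparison of the asymptotic exponents. One must restrict to the regular full-measure set where the Oseledets limits exist, check that the $\frac1t\log$ limit can be taken separately in numerator and denominator so that the $-at$ term survives while $\frac1t\log C \to 0$, and verify that the extremal directions ($\lambda_p$ inside $U$ and $\lambda_{p+1}$ inside $S$) may legitimately be plugged into the inequality at a common regular point. Once the exponential separation between the $U$- and $S$-exponents is established, identifying $U$ with the span of the top Oseledets blocks, and hence translating the gap into the dimension statement, is a routine consequence of Oseledets uniqueness and the dimension count $\dim U = p$.
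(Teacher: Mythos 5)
Your argument is correct, but it extracts the spectral gap from domination by a different mechanism than the paper does. The paper's proof never evaluates the cocycle on individual vectors of $U$ and $S$: it invokes Bochi--Gourmelon's Theorem A, which upgrades the existence of a dominated splitting of index $p$ to a \emph{uniform} exponential decay of the singular value ratio $\sigma_{p+1}/\sigma_{p}(\ti{\Psi}_t)$, and then reads off $\lambda_{p}>\lambda_{p+1}$ from the singular-value characterization of Lyapunov exponents in \autoref{rmk:lyap_def}. You instead feed the defining inequality of domination directly into the Oseledets limits along the two invariant subbundles, conclude that every exponent realized on $S$ is at most every exponent realized on $U$ minus $a$, and then sort. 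Your route is more elementary for this step, since it bypasses the (nontrivial) equivalence between domination and singular value gaps, but it silently uses one standard fact that deserves a line or a citation: for a flow-invariant measurable splitting $E_\rho=U\oplus S$, the Lyapunov spectrum of the full cocycle, counted with multiplicity, is the union of the spectra of the restrictions to $U$ and to $S$ --- this is what guarantees that the $p$ exponents carried by $U$ really occupy the slots $\lambda_1,\dots,\lambda_p$ and leave none behind. Granting that, both arguments give the same quantitative conclusion $\lambda_p-\lambda_{p+1}\ge a>0$, and the same remark applies verbatim at index $d-p$. A cosmetic caveat: your closing translation of the gap into $\dim E_\rho^{p}=1$ inherits the indexing looseness already present in the statement ($\lambda_p>\lambda_{p+1}$ says that the Oseledets block containing the $p$-th exponent ends at position $p$, not that it is one-dimensional unless $p=1$ or $p=d-1$), but the paper's own proof does not address this reformulation either.
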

\begin{proof}
As explained above,
the flow $(\ti{\Psi_t})$
admits a dominated splitting
of index $p$
because $\rho$
is $p$-Anosov.
By
\cite[th. A]{bochiCharacterizationsDomination2009},
this implies that
$
\frac
{\sigma_{p+1}}
{\sigma_{p}}
(\ti{\Psi_t})$
uniformly
decreases to
$ 0 $
exponentially fast.

Since by \autoref{rmk:lyap_def} we have
$ \lambda_{p} =
\lim
\frac{1}{t}
\log \sigma_{p}
(\ti{\Psi_t})$,
this implies that 
$ \lambda_{p} > \lambda_{p+1} $.
As $p$-Anosov implies
$d-p$-Anosov,
we also have
$ \lambda_{d-p} > \lambda_{d-p+1} $.
\end{proof}

We finally relate the boundary maps
$ \xi_i:\binf \pi_1(X) \to G_{i} $ associated to a $i$-Anosov representation $ \rho $ given by \autoref{thm:boundary_maps}
and the forward and backward flags $F_\rho^i$ and $B_\rho^i$ given by the theorem of Oseledets (see \autoref{subsec:Oseledets}). Note that we can identify
$ \Sp^{1} $ 
and
$ \partial_{\infty} \pi_1(X) $
using the Fuchsian representation $j_X:\pi_1(X)\to \SL(2,\R)$ inducing the complex structure $X$ on a surface $S$.
Let now $ x \in T^{1} X $ be a regular point, i.e., a point for which the Oseledets decomposition and the Oseledets flags are defined, and let $ x_{+\infty}\in \Sp^{1} $ and $ x_{-\infty} \in \Sp^{1} $
be the boundary points in the future
and in the past of the geodesic
defined by $ x $.

\begin{proposition}
\label{prop:flags_relation}
Let $\rho:\pi_1(X) \to \SL(d, \R) $ be $p$-Anosov.
Then we have the relations
\[ \xi_{i} (x_{+\infty}) = (F_\rho^{i})_{x} \quad\text{and}\quad \xi_{i} (x_{-\infty}) = (B_\rho^{i})_{x} \]
for $ i=p,d-p $ and for any regular point $x\in T^1X$.
\end{proposition}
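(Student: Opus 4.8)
The goal is to identify the Anosov boundary maps $\xi_i$ with the Oseledets flags $F_\rho^i$ and $B_\rho^i$ at a regular point $x$. The plan is to exploit the fact that both objects are characterized by growth rates under the flow, so the strategy is to show that $\xi_i(x_{+\infty})$ satisfies the defining property of $(F_\rho^i)_x$ and similarly for the backward flag. I would first recall the two key inputs: on the one hand, the forward flag is characterized (as stated in \autoref{subsec:Oseledets}) by the condition that $v \in (F_\rho^i)_x \setminus (F_\rho^{i-1})_x$ if and only if $\frac{1}{t}\log \norm{\ti{\Psi}_t v} \to \tilde{\lambda}_{r+1-i}$; on the other hand, the Anosov splitting $\xi_p(x_{-\infty}) \oplus \xi_{d-p}(x_{+\infty})$ is dominated of index $p$ for $\ti{\Psi}_t$, with domination constants independent of the point.

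\textbf{Key steps for the forward flag.}
First I would treat the case $i = p$. The point is that, for a $p$-Anosov representation, the dominated splitting directly compares growth rates: vectors in $\xi_{d-p}(x_{+\infty})$ grow strictly slower under $\ti{\Psi}_t$ than vectors in $\xi_p(x_{-\infty})$, uniformly. Since $F_\rho^p$ is exactly the span of the $p$ slowest-growing Oseledets directions (those with the smallest forward exponents), I would argue that the $(d-p)$-dimensional subspace $\xi_{d-p}(x_{+\infty})$ must coincide with $F_\rho^{d-p}$, and dually $\xi_p(x_{+\infty})$ with $F_\rho^p$. Concretely, domination forces every nonzero $v \in \xi_{d-p}(x_{+\infty})$ to have forward exponent at most $\tilde{\lambda}_{r+1-(d-p)}$, while the complementary directions grow strictly faster; a dimension count, using that $\dim E_\rho^p = 1$ from \autoref{prop:symplicity}, then pins down the subspaces. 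The equivariance of $\xi_i$ and the $\ti{\Psi}_t$-invariance of the Oseledets bundles (noted at the end of \autoref{subsec:Oseledets}) guarantee the identification is consistent along orbits.

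\textbf{The backward flag and time reversal.}
For the backward flag I would apply the same argument to the inverse flow $\ti{\Psi}_{-t}$, using that $p$-Anosov is equivalent to $(d-p)$-Anosov and that reversing the geodesic swaps $x_{+\infty}$ and $x_{-\infty}$. Under time reversal the roles of $\xi_p(x_{-\infty})$ and $\xi_{d-p}(x_{+\infty})$ interchange, so the subspace $\xi_p(x_{-\infty})$ now plays the role of the slowest-growing directions for $\ti{\Psi}_{-t}$, which is precisely the characterization of $(B_\rho^p)_x$ via $\frac{1}{t}\log\norm{\ti{\Psi}_{-t}v} \to -\tilde{\lambda}_i$. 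Matching the backward growth rates gives $\xi_i(x_{-\infty}) = (B_\rho^i)_x$ for $i = p, d-p$.

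\textbf{Main obstacle.}
The delicate point is not the dimension bookkeeping but ensuring that the \emph{uniform} (in the point) domination from the Anosov condition correctly translates into the \emph{pointwise} Oseledets exponent comparison at the specific regular point $x$, and that the transversality $\xi_p(x_{-\infty}) \oplus \xi_{d-p}(x_{+\infty}) = \R^d$ (\autoref{thm:boundary_maps}) is compatible with the flag filtration rather than merely with a splitting. In particular I expect the subtlety to lie in verifying that $\xi_i(x_{+\infty})$ is genuinely the \emph{nested} forward flag, which requires using the full filtration $\xi_1 \subset \xi_2 \subset \cdots$ of Anosov boundary maps together with the strict separation of exponents $\tilde{\lambda}_1 > \cdots > \tilde{\lambda}_r$ coming from the Anosov property at each index. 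Once these growth-rate comparisons are aligned, the identification of flags is forced.
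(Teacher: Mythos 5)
Your argument is correct and follows essentially the same route as the paper's: both reduce the statement to identifying the dominated splitting $\xi_{p}(x_{-\infty})\oplus\xi_{d-p}(x_{+\infty})$ of $E_\rho$ with the Oseledets flags, and then obtain the remaining two identities from the equivalence of $p$-Anosov and $(d-p)$-Anosov. The only real difference is that the paper gets the key identification $U_x=(B_\rho^{p})_x$ and $S_x=(F_\rho^{d-p})_x$ by citing the proof of Theorem A of Bochi--Gourmelon, whereas you re-derive it via the growth-rate comparison and dimension count; that derivation is sound, but be aware that $\xi_{p}(x_{+\infty})=(F_\rho^{p})_x$ is not ``dual'' within the index-$p$ splitting (where $\xi_{p}(x_{+\infty})$ does not appear at all) --- it requires running the same argument on the second dominated splitting $\xi_{d-p}(x_{-\infty})\oplus\xi_{p}(x_{+\infty})$ of index $d-p$, which is the role the $(d-p)$-Anosov property plays here rather than in the time-reversal step.
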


\begin{proof}

Consider the dominated splitting
$ E_{\rho} = U \oplus S$
associated to the
the representation
$\rho$
and recall that
$U_{x}  =\xi_{p}(x_{-\infty})$
and
$S_{x}  =\xi_{d-p}(x_{+\infty})$.

In the proof of
\cite[Th. A]{bochiCharacterizationsDomination2009} 
 it is shown that,
when
$ E_{\rho} = U \oplus S $
is dominated for
$ (\ti{\Psi}_{t} ) $,
then
for a generic $ x $ we have
\begin{equation}
	U_{x}  =
	(B_{\rho}^{p})_{x} 
	\text{ and }
	S_{x}  =
	(F_{\rho}^{d-p})_{x} 
	,
\end{equation}
where
$ B_{\rho} $
and
$ F_{\rho} $
are the backward
and forward flags
given by Oseledets theorem.

Combining these two facts
we have that 
\begin{equation}
	\xi_{p}(x_{-\infty})  =
	(B_{\rho}^{p})_{x} 
	\text{ and }
	\xi_{d-p}(x_{+\infty})  =
	(F_{\rho}^{d-p})_{x} 
\end{equation}
for any regular point $x\in T^1X$.
Finally, since if $ \rho $
is $ p $-Anosov it is also
$ d-p $-Anosov, we also have
\begin{equation}
	\xi_{d-p}(x_{-\infty})  =
	(B_{\rho}^{d-p})_{x} 
	\text{ and }
	\xi_{p}(x_{+\infty})  =
	(F_{\rho}^{p})_{x} 
	.
\end{equation}
\end{proof}

\section{Relationships and proofs of main gap theorem}
\label{sec:relationships_between_the_different_lyapunov_exponents}

In this section we prove the results about the gaps between Lyapunov exponents stated in \autoref{th:main} and \autoref{th:main_anosov} in two independent ways. The main idea is to relate the Lyapunov exponents given by Oseledets Theorem in the case of $ (1,1,2) $-hyperconvex representations to other invariants for which we can prove an inequality statement. The first approach relates the Lyapunov exponents to the transverse Lyapunov exponent defined in \autoref{sec:foliated_lyapunov_exponent} and uses the inequality obtained in \autoref{th:bound_chi_fol}, while in the second approach we relate them to the thermodynamic formalism described in \autoref{sec:thermo} and we use the inequality of \autoref{prop:bound_thermo}. 

Let $X$ be a Riemann surface and $\rho:\pi_1(X)\to \SL(d,\R)$ be a  (1,1,2)-hyperconvex Anosov representation. Let first  $\lambda_i(X,\rho)$ be the Lyapunov exponents given by Oseledets Theorem as in \autoref{subsec:Oseledets}.  Let then $\lambda_T(M_\rho)$ be the transverse Lyapunov exponent, as in \autoref{sec:foliated_lyapunov_exponent}, associated to the manifold $M_\rho$ defined below in \autoref{subsec:relation_foliated}. Let finally $J(\mathbf{1},f_\rho)$ be the renormalized intersection as defined in \autoref{sec:thermo} of the constant function $\mathbf{1}$ and the function $f_\rho$ asssociated to $\rho$ as defined below in \autoref{subsec:relation_thermo}.
The main relations we want to show in this section are the following.
\begin{theorem}
\label{thm:relations}
Let $\rho:\pi_1(X)\to \SL(d,\R)$ be a  (1,1,2)-hyperconvex
Anosov representation. Then we have the relation
    \[\lambda_1(X,\rho)-\lambda_2(X,\rho)=-\lambda_T(M_\rho)=J(\mathbf{1},f_\rho).\]
\end{theorem}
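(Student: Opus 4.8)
The plan is to prove the two equalities of \autoref{thm:relations} separately, linking all three quantities through the derivative of the parallel transport $\ti{\Psi}_t$ on the top two Oseledets line bundles. Observe first that $\rho$ being both $1$- and $2$-Anosov forces, by \autoref{prop:symplicity}, $\dim E_\rho^1 = \dim E_\rho^2 = 1$; hence $E_\rho^1$ and $E_\rho^2$ are genuine line bundles, and $\lambda_1(X,\rho)-\lambda_2(X,\rho)$ is the Liouville average of the additive cocycle $\log\norm{\ti{\Psi}_t|_{E_\rho^1}} - \log\norm{\ti{\Psi}_t|_{E_\rho^2}}$ by Oseledets (\autoref{rmk:lyap_def}). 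Both equalities will reduce to this single integral.

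For the first equality $\lambda_1(X,\rho)-\lambda_2(X,\rho) = -\lambda_T(M_\rho)$, I would make $M_\rho$ explicit as $T^1X$ with the transverse (strong stable) conformal structure of $\FF_u$ replaced by the one pulled back through the limit curve $\xi_1(\binf\pi_1(X))$. \autoref{thm:hyperconvex} is exactly what makes this curve, the deformed foliation $\FF_u^M$, and the conjugacy $\Xi$ enjoy the $C^{1+\holder}$ and leafwise $C^1$ regularity required in \autoref{subsec:notation_transverse}, and it identifies the normal line of $\FF_u^M$ with the tangent line $T\,\P\xi_2$ of the curve. Since the transversal to the weak unstable foliation is the strong stable direction, i.e. the direction along which $x_{-\infty}$ varies, the relevant curve point is $\xi_1(x_{-\infty})$, and \autoref{prop:flags_relation} gives $\xi_1(x_{-\infty}) = (B_\rho^1)_x = (E_\rho^1)_x$ and $\xi_2(x_{-\infty}) = (B_\rho^2)_x = (E_\rho^1)_x \oplus (E_\rho^2)_x$. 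Through the fibred description \eqref{eq:transverse}, the holonomy of $\FF_u^M$ along a flow segment is then the derivative of the projective transport at the attracting point $\xi_1(x_{-\infty})$, acting on $\Hom((E_\rho^1)_x,(E_\rho^2)_x)$, whose logarithmic growth rate is precisely $\lambda_2-\lambda_1$. Hence $\lambda_T(M_\rho)=\lambda_2-\lambda_1$. The only delicate point is that the limit defining $\lambda_T$ is taken leafwise for the Poincaré measure, which by \autoref{rem:leafwise measure} and \autoref{th:def_chi} agrees with the Liouville limit above.

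For the second equality $\lambda_1(X,\rho)-\lambda_2(X,\rho) = J(\mathbf 1, f_\rho)$, the function $f_\rho$ is the positive Hölder reparametrisation whose period over the closed orbit of $\gamma$ is the gap between the logarithms of the top two eigenvalue moduli of $\rho(\gamma)$; equivalently, $f_\rho$ is the infinitesimal generator of the cocycle above, that is minus the holonomy cocycle of the first proof. I would split $J(\mathbf 1, f_\rho) = \frac{h_{f_\rho}}{h_{\mathbf 1}}I(\mathbf 1, f_\rho)$ into its factors. Since $\Psi^{\mathbf 1}$ is the geodesic flow of the curvature $-1$ Poincaré metric, $h_{\mathbf 1}=1$ and its measure of maximal entropy $\mu_{\mathbf 1}$ is the Bowen–Margulis measure, which for constant curvature is the Liouville measure $v_L$. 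Therefore $I(\mathbf 1, f_\rho)=\int_{T^1X} f_\rho\, dv_L$, which by the very same Liouville integral as in the first proof equals $\lambda_1-\lambda_2$ (so in fact $I(\mathbf 1, f_\rho) = -\lambda_T(M_\rho)$, and the two approaches share this computation).

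The remaining, genuinely nontrivial point is the entropy normalisation $h_{f_\rho}=1$: here $h_{f_\rho}$ is the entropy of the first simple root flow, which by the conformality results of \cite{pozzettiConformalityRobustClass2019} equals the Hausdorff dimension of the limit curve $\xi_1(\binf\pi_1(X))$; as \autoref{thm:hyperconvex} makes this curve $C^{1+\holder}$, its Hausdorff dimension is $1$, whence $h_{f_\rho}=1$ and $J(\mathbf 1, f_\rho) = h_{f_\rho}\int_{T^1X} f_\rho\, dv_L = \lambda_1-\lambda_2$. I expect this identification $h_{f_\rho}=1$ to be the main obstacle, together with checking that the deformed data of the first proof genuinely meet the regularity hypotheses of \autoref{sec:foliated_lyapunov_exponent}; both ultimately rest on the $C^{1+\holder}$-regularity of $\xi_1$ granted by hyperconvexity. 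Once \autoref{thm:relations} is in place, \autoref{th:main_anosov} follows from either $\lambda_T\le -1$ (\autoref{th:bound_chi_fol}) or $J\ge 1$ (\autoref{prop:bound_thermo}), and the equality case $J(\mathbf 1, f_\rho)=1$, forcing $f_\rho$ to be Livsic cohomologous to $\mathbf 1$, pins down the uniformising representation.
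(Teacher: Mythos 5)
Your treatment of the first equality is essentially the paper's own proof of \autoref{prop:chi_p_2_1}: the same model $M_\rho=(\HH^2\times\xi_1(\binf\pi_1(X)))/\pi_1(X)$, the same use of \autoref{thm:hyperconvex} and \autoref{prop:flags_relation} to identify the normal line of the deformed foliation at $x$ with $T_{\P(E^1_\rho)_x}\P(B^2_\rho)_x\cong\Hom\left((E^1_\rho)_x,(E^2_\rho)_x\right)$, and the same conclusion that the transverse derivative grows at rate $\lambda_2-\lambda_1$. The one thing you elide is that extracting this rate from the $\Hom$ description requires the subexponential control \eqref{eq:oseledets_distance} on the angle between $(E^1_\rho)_{\Psi_t x}$ and $(E^2_\rho)_{\Psi_t x}$; the paper runs this through \autoref{lem:formula_derivative} and the wedge-norm identity. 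That step is routine but not optional.

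For the second equality there is a genuine gap. You assert that $f_\rho$ \emph{is} (minus) the infinitesimal generator of the holonomy cocycle, so that $\int_{T^1X}f_\rho\,dv_L$ equals the Liouville average of $\log\norm{\ti{\Psi}_t|_{E^1_\rho}}-\log\norm{\ti{\Psi}_t|_{E^2_\rho}}$. But $f_\rho$ is only characterized (\autoref{thm:frho}) by its periods over closed orbits, namely the \emph{eigenvalue} gaps $\log\lambda_1(\rho(\gamma))-\log\lambda_2(\rho(\gamma))$, whereas the Lyapunov exponents are \emph{singular-value} growth rates along Liouville-generic geodesics. Bridging periodic eigenvalue data and generic singular-value asymptotics is precisely the nontrivial content of the paper's proof of \autoref{prop:J_relation}: Birkhoff's theorem plus closing of long geodesic segments, then \autoref{lem:spectral_radius} (eigenvalues versus singular values along quasi-geodesics, which uses transversality of the limit maps and a Benoist--Quint estimate) and \autoref{lem:relationlyapsingular}. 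Your shortcut is repairable --- by Livsic's theorem two H\"older functions with equal periods have equal integrals against every invariant measure, so it would suffice to show that the holonomy cocycle of the first proof is generated by a H\"older function whose periods are $-\varphi(\rho(\gamma))$, i.e.\ to compute the derivative of $\P\rho(\gamma)$ at its fixed point on the limit curve --- but that verification is absent and is not easier than what the paper actually proves. Your identification $h_{f_\rho}=1$ via the Hausdorff dimension of the $C^{1+\holder}$ limit curve is correct and is exactly the content of the cited \autoref{thm:entropy}.
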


Using the previous \autoref{thm:relations} we are now able to prove the gap statement of \autoref{th:main_anosov}

\begin{proof}[Proof of \autoref{th:main_anosov}]
If $\rho:\pi_1(X)\to \SL(d,\R)$ is a  (1,1,2)-hyperconvex
Anosov representation, the inequality $\lambda_1(X,\rho)-\lambda_2(X,\rho)\geq 1$ follows from the relations expressed in \autoref{thm:relations} together with either the bound of \autoref{th:bound_chi_fol} for the transverse Lyapunov exponent or the bound of \autoref{prop:bound_thermo} for the renormalized intersection.
\end{proof}

The proof of \autoref{th:main} is the content of \autoref{subsec:proof_main_thm}.

\subsection{Relationship with transverse Lyapunov exponent}
\label{subsec:relation_foliated}

 As usual,  let $S$ be a compact surface and $X$ be a Riemann surface structure on $S$. We denote by $j_X:\pi_1(X)\to \SL(2, \R)$ the Fuchsian representation defining the Riemann surface structure.
The Fuchsian representation $j_X$ induces an identifications between the universal cover $ \ti{S} = \ti{X} $ and the hyperbolic plane
$ \HH^{2} $, and between the boundary $ \binf \pi_1(X) $ of the fundamental group of $X$  and the boundary at infinity  $ \Sp^{1}$ of $ \HH^{2} $.
We will consider the isomorphism 
\[f_u: T^{1} X \overset{\cong}{\longrightarrow} (\HH^{2} \times \Sp^{1} ) / \pi_1(X)\]
where $ \pi_1(X)$ acts diagonally by $ j_X $. The isomorphism is given by associating to  $ (\ti{x},v) $ in $ T^{1} \ti{X} $ the point $ (\ti{z}, \zeta)\in \HH^{2} \times \Sp^{1} $, where $\ti{z}$ is given by the uniformization map and  
$ \zeta $ is the point in the boundary $ \Sp^{1} $ reached by the geodesic defined by $ (x,v) $ when times goes to negative infinity.
With this identification, the \emph{horizontal foliation of this flat bundle is identified with the weak unstable foliation of the geodesic flow}.

Let now $\rho:\pi_1(X)\to \SL(2,\R)$ be a $(1,1,2)$-hyperconvex representation. Then by \autoref{thm:hyperconvex} we know that the image 
$ S_\rho:=\xi_{1} (\binf \pi_1(X)) $
is a
$ C^{1+\holder} $-submanifold
of
$ \P^{d-1} (\R) $.
Using $ j_X $, we identify
$ \binf \pi_1(X)$
and $ \Sp^{1} $ and so we obtain a map 
\[\xi_{1}^X:\Sp^{1}\longrightarrow S_\rho\subset \P^{d-1} (\R).\]

We finally construct the $ C^{1+\holder} $-manifold  $ M_{\rho} = (\HH^2 \times S_\rho)/\pi_1(X) $ where
$ \pi_1(X) $ acts diagonally by $ j_X \times \rho $.

The manifold $M_\rho$  is Hölder-homeomorphic to $ T^{1} X $ via the map
$ \Xi $ 
induced by the composition of $f_u$ and the map
$ (\tilde{z}, \zeta) \mapsto (\tilde{z}, \xi_1^X(\zeta)) $. We will denote by $\pi^\rho:M_\rho\to X$ the projection. The image $\FF_{u}^{M_\rho}$ of the unstable foliation under $\Xi$ is $ C^{1+\holder} $ and $\Xi$ is $C^1$
along the leaves of the foliation. 
We are then in the same setting as \autoref{subsec:notation_transverse} and we can define the transverse Lyapunov exponent associated to $M_\rho$.


Now that we have constructed $M_\rho$, we can now prove the relation stated in \autoref{thm:relations} between the Oseledets Lyapunov exponents associated to $X$ and $\rho$ and the transverse Lyapunov exponent associated to  $M_\rho$.
\begin{proposition}
	\label{prop:chi_p_2_1}
If $\rho:\pi_1(X)\to \SL(d,\R)$ is a  (1,1,2)-hyperconvex
Anosov representation, then
	$
		\lambda_{T}(M_\rho) = \lambda_{2}(X,\rho) - \lambda_{1}(X,\rho)
		.
	$
\end{proposition}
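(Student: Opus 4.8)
The plan is to identify the fiberwise map $\Psi^{M_\rho}_{t,x}$ appearing in the fibered description \eqref{eq:transverse} of the transverse Lyapunov exponent with the projectivization of the parallel transport cocycle $\ti{\Psi}_t$ on $E_\rho$, and then to compute the norm of its derivative along $T S_\rho$ by means of Oseledets theorem and the hyperconvexity of $\rho$.

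First I would make precise that, since $M_\rho=(\HH^2\times S_\rho)/\pi_1(X)$ is the flat $S_\rho$-subbundle of the projectivized flat bundle of $E_\rho$ and $\Xi$ carries the weak unstable foliation of $\Psi_t$ to the horizontal (flat) foliation $\FF_u^{M_\rho}$, the holonomy of $\FF_u^{M_\rho}$ along a path $\Psi^{M_\rho}_{[0,t]}(x)$ is the flat holonomy along the underlying geodesic segment, realized between fibers as the projective action on $S_\rho$ of the linear transport $\ti{\Psi}_t$. Although in the universal cover this holonomy is the identity, $\lambda_T$ is computed on the \emph{compact} quotient $M_\rho$, where any transverse metric must be $\pi_1(X)$-invariant; carrying the reference fiber back by the deck element $\gamma_t$ tracking the geodesic turns the holonomy into the genuine projective transformation induced by $\ti{\Psi}_t$, and it is its derivative that enters \eqref{eq:transverse}. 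By \autoref{rmk:recurrence} I may measure this derivative with any continuous norm on the tangent to the fibers, for which I take the Fubini--Study metric of $\P^{d-1}(\R)$.

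Next I would pin down the point and the direction. By the construction of $\Xi$, the point $\zeta\in S_\rho$ corresponding to $x\in T^1X$ is $\zeta=\xi_1^X(x_{-\infty})=\xi_1(x_{-\infty})$, and by \autoref{thm:hyperconvex} the tangent line $T_\zeta S_\rho$ equals $T_{\xi_1(x_{-\infty})}\P\xi_2(x_{-\infty})$. Choosing $v$ spanning $\xi_1(x_{-\infty})$ and $w$ spanning $\xi_2(x_{-\infty})$ transverse to $v$, the Fubini--Study norm of the derivative of $\P(\ti{\Psi}_t)$ at $[v]$ in the direction of $w$ equals, up to a positive factor independent of $t$,
\[
\frac{\|\ti{\Psi}_t v\wedge \ti{\Psi}_t w\|}{\|\ti{\Psi}_t v\|^2}.
\]
Here I would invoke the standard formula for the derivative of a projective transformation, whose verification is a routine computation in an affine chart.

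Finally I would compute the exponential rate. For a regular point $x$ (\autoref{th:oseledets}), \autoref{prop:flags_relation} identifies $\xi_1(x_{-\infty})=(E_\rho^1)_x$ and $\xi_2(x_{-\infty})=(E_\rho^1\oplus E_\rho^2)_x$; since $\rho$ is $1$- and $2$-Anosov, \autoref{prop:symplicity} gives $\lambda_1>\lambda_2>\lambda_3$, so these are genuine top Oseledets subspaces. Consequently $v$ spans the most expanded line and $v\wedge w$ spans the most expanded plane, so by the singular value characterization of \autoref{rmk:lyap_def} (applied to $\ti{\Psi}_t$ and to $\wedge^2\ti{\Psi}_t$, whose top singular value is $\sigma_1\sigma_2$) one has $\tfrac1t\log\|\ti{\Psi}_t v\|\to\lambda_1$ and $\tfrac1t\log\|\ti{\Psi}_t v\wedge\ti{\Psi}_t w\|\to\lambda_1+\lambda_2$. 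Hence the displayed quantity grows like $e^{(\lambda_2-\lambda_1)t}$, and feeding this into \eqref{eq:transverse} yields $\lambda_T(M_\rho)=\lambda_2(X,\rho)-\lambda_1(X,\rho)$; this holds at every regular point and so gives the single constant $\lambda_T$. I expect the main obstacle to be the first step: rigorously matching the abstract foliated holonomy of $\FF_u^{M_\rho}$ with the projective parallel transport cocycle and justifying, through the non-invariance of the Fubini--Study metric under $\rho(\pi_1(X))$, that the $\gamma_t$-twist produces exactly $\P(\ti{\Psi}_t)$; the subsequent derivative and Oseledets estimates are comparatively routine.
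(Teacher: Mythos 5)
Your proposal is correct and follows essentially the same route as the paper: the fibered formula \eqref{eq:transverse}, the identification of the fiber maps with the projectivized parallel transport $\P\ti{\Psi}_{t}$ restricted to $S_\rho$, the hyperconvexity identification $T_\zeta S_\rho = T_{\xi_1}\P\xi_2$ combined with \autoref{prop:symplicity} and \autoref{prop:flags_relation}, and the derivative formula of \autoref{lem:formula_derivative}. The only cosmetic difference is that you extract the rate $\lambda_1+\lambda_2$ for the wedge term by applying Oseledets directly to $\wedge^2\ti{\Psi}_t$, whereas the paper uses the angle estimate \eqref{eq:oseledets_distance}; these are equivalent.
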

\begin{proof}
	We will use the notation of \autoref{sec:foliated_lyapunov_exponent}.  We consider the definition of the transverse Lyapunov exponent via the expression \eqref{eq:transverse}, i.e. 
	\[	\lambda_{T}
	=
	\lim_{t \to +\infty} 
	\frac{1}{t}
	\log
	\norm{
		D_{\zeta} 
		\Psi^{M_\rho}_{t,x} 
	}\]
	 for almost any $x\in M_\rho$ in every leaf, where recall that $\Psi^{M_\rho}_{t,x}$ is the projective linear transformation induced by the flow $\Psi^{M_\rho}_t$ on $M_\rho$ between the vertical fibers of $\pi^{\rho}$ and $\zeta$ is the projection of $x$ to the vertical fiber.  
	Fix a point $ x \in M_{\rho} $ 
	for which the previous limit holds.
	The point $ x \in M_{\rho} $
	corresponds to the point $ x' = \Xi^{-1}(x) $
	in $ T^{1} X $.
 
	Consider now the linear map
	$ \ti{\Psi}_{t,x'} :
	(E_{\rho})_{x'}
	\to
	(E_{\rho})_{\Psi_t(x')}
	$, where $E_\rho$ is the flat bundle associated to $\rho$ as defined in \autoref{subsec:Oseledets} and $\ti{\Psi}_t$ is the lift of the geodesic flow to $E_{\rho}$.
 
	The important remark here is that 
	$ \Psi^{M_\rho}_{t,x} $
	is the projective linear transformation between the fibers 
	$ (M_{\rho})_{\pi^\rho(x)} $ 
	and
	$ (M_{\rho})_{\pi^\rho(\Psi^M_{t}(x))} $
	induced by the restriction to $S_\rho$ of the projectivication of
	$ \ti{\Psi}_{t,x'}$, i.e.
 \[{\P\ti{\Psi}_{t,x'}}_{|S_\rho}=\Psi^{M_\rho}_{t,x}.\]
	
	Moreover, by definition, $ \zeta $
	is the point in
	$ (M_{\rho})_{\pi^\rho(x)} $ 
	corresponding to $ x $.
	By construction of $ M_{\rho} $, we have then
	\[ \zeta= \xi_1^X(x'_{-\infty}),\] where $ x'_{-\infty} \in \Sp^{1} $
is the boundary point in the past of the geodesic
defined by $ x' $.

	Recall finally that, since
	$ \rho $ is 
	$(1,1,2)$-hyperconvex,  by \autoref{prop:symplicity} and \autoref{prop:flags_relation}, we have
	\[ \xi^X_{1}(x'_{-\infty}) = (E^{1}_\rho)_{x'}\quad \text{and}\quad  \xi^X_{2}(x'_{-\infty}) = (B^{2}_\rho)_{x'}=(E^{1}_\rho)_{x'} \oplus (E^{2}_\rho)_{x'} \]
	and that moreover,  by \autoref{thm:hyperconvex}, we have 
 \[T_{\xi^X_1(s)}S_\rho=T_{\xi^X_{1}(s)} \P \xi^X_{2}(s).\]
 Putting together the previous displayed expressions, we find
    \[D_{\zeta} 
			\Psi^{M_\rho}_{t,x}=D_{\xi_1^X(x'_{-\infty})} 
			\Psi^{M_\rho}_{t,x}=D_{\P(E^{1}_\rho)_{x'}} \left({\P\ti{\Psi}_{t,x'}}_{|S_\rho}\right) =D_{\P(E^{1}_\rho)_{x'}} \left({\P\ti{\Psi}_{t,x'}}\right)_{|T_{\P(E^{1}_\rho)_{x'}} \P(B^{2}_\rho)_{x'}}\]
			
	We choose
	$ u \in (E^{1}_\rho)_{x'} $
	and
	$ v \in (E^{2}_\rho)_{x'} $
	two non-zero vectors.
	Since we can equip $E_\rho= T^1 \tilde{X} \times \R^{d}/\pi_1(X) $ 
with the measurable norm given by the constant norm on $\R^{d}$ (which is well-defined only up to a measurable zero set of discontinuity), we can apply the formula shown below in \autoref{lem:formula_derivative} and obtain 
	\begin{equation}
		\norm{
			D_{\zeta} 
			\Psi^{M_\rho}_{t,x} 
		}
		=
		\frac{\norm{\ti{\Psi}_{t,x'} u \wedge \ti{\Psi}_{t,x'} v}}
		{\norm{u \wedge v}}
		\left( 
			\frac{\norm{\ti{\Psi}_{t,x'} u}}{\norm{u}}
		\right)
		^{-2}.
	\end{equation}
	Remark that 
	$ \ti{\Psi}_{t,x'} u $ 
	is in
	$ (E^{1}_\rho)_{\Psi_t (x')}  $ 
	and
	$ \ti{\Psi}_{t,x'} v $ 
	is in
	$ (E^{2}_\rho)_{\Psi_t (x')} $.
	Recall the estimate
    \eqref{eq:oseledets_distance}
	on the angle between
	$ (E^{1}_\rho)_{\Psi_{t}(x')}  $
	and
	$ (E^{2}_\rho)_{\Psi_{t}(x')}  $
	given
	by
	Oseledets theorem
    \ref{th:oseledets}:
	\begin{equation}
	   \lim_{t \to +\infty}
	\frac{1}{t}
	\log
	d(
	(E^{1}_\rho)_{\Psi_{t} (x')},
	(E^{2}_\rho)_{\Psi_{t} (x')}
	)
	=
	0.
	\end{equation}
	The previous expression implies
	\begin{equation}
		\log
		\norm{\ti{\Psi}_{t,x'} u \wedge \ti{\Psi}_{t,x'} v}
		=
		\log (\norm{\ti{\Psi}_{t,x'} u} \norm{ \ti{\Psi}_{t,x'} v})
		+ o(t)
		,
	\end{equation}
since $d(L_1,L_2)=\norm{u \wedge v}/{\norm{u} \norm{v}}$
 for any two lines $L_1$,$L_2$
 and non-zero vectors
 $u \in L_1$
 and
 $v \in L_2$.
	We can then compute
	\begin{align}
		\log
		\norm{
			D_{\zeta} 
			\Psi^{M_\rho}_{t,x}
		}
		&=
		\log \norm{\ti{\Psi}_{t,x'} u}
		+
		\log \norm{ \ti{\Psi}_{t,x'} v}
		-
		2 \log \norm{\ti{\Psi}_{t,x'} u}
		+ o(t)
		\\
		&=
		\log \norm{ \ti{\Psi}_{t,x'} v}
		-
		\log \norm{\ti{\Psi}_{t,x'} u}
		+ o(t)
		.
	\end{align}
	Taking the limit for $t\to \infty$ of the previous expression and using that
	\[	\frac{1}{t}
	\log \norm{\ti{\Psi}_{t,x'} u}
	\to \lambda_{1}(X,\rho) \quad \text{and}\quad \frac{1}{t}
	\log \norm{\ti{\Psi}_{t,x'} v}
	\to \lambda_{2}(X,\rho)\]
	since
	$ u \in (E^{1}_\rho)_{x'} \setminus \left\{ 0 \right\} $
    and
	$ v \in (E^{2}_\rho)_{x'} \setminus \left\{ 0 \right\} $,
	we have proved the desired relation.
\end{proof}

We finally give a self-contained proof of the formula used previously to compute the norm of the derivative of a projective linear transformation.

\begin{lemma}
				\label{lem:formula_derivative}
				For a linear transformation
				$ g:\R^d\to \R^d$,
				the operator norm of the restriction of the derivative
				of $\P g$ to a line
				$\P(\ps{u,v})\cong\P^1$
				at the point $ \P(\langle u\rangle) $ is
				\begin{equation}
					\norm{D_{\P(\langle u\rangle)}
					\P g_{|\P\langle u,v\rangle}}
					=
					\frac{\norm{gu \wedge gv}}
					{\norm{u \wedge v}}
					\left( 
						\frac{\norm{gu}}{\norm{u}}
					\right)
					^{-2}
				\end{equation}
    where on the left hand side we consider the operator norm and on the right hand side the euclidean norm on $\R^d$.
			\end{lemma}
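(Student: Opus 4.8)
The plan is to compute the operator norm directly in a convenient coordinate system adapted to the line $\P\langle u,v\rangle$, reducing the problem to a $2\times 2$ calculation. First I would choose an orthonormal-type parametrization: since the derivative is taken along the projective line $\P\langle u,v\rangle$, only the restriction of $g$ to the plane $\langle u,v\rangle$ matters for the image direction, while the normalization of the derivative of $\P g$ depends on the ambient Euclidean structure at $gu$. Concretely, I would use the standard affine chart on $\P\langle u,v\rangle \cong \P^1$ centered at $\P(\langle u\rangle)$, writing points as $\P(\langle u + s v\rangle)$ for the parameter $s$ near $0$, and the analogous chart on the target $\P^{d-1}(\R)$ around $\P(\langle gu\rangle)$.

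The key computation is to differentiate the map $s \mapsto \P g(\langle u + sv\rangle) = \P(\langle gu + s\, gv\rangle)$ at $s=0$. In the target, the projection $\R^d \setminus \{0\} \to \P^{d-1}(\R)$ has a derivative at $gu$ whose effect is to project the velocity vector $gv$ onto the orthogonal complement of $gu$ and rescale by $1/\norm{gu}$; that is, the derivative of the projectivization sends $gv$ to $\tfrac{1}{\norm{gu}}\big(gv - \tfrac{\langle gv, gu\rangle}{\norm{gu}^2} gu\big)$. The Euclidean length of this orthogonal component is exactly $\norm{gu \wedge gv}/\norm{gu}$, since for any two vectors the area of the parallelogram equals $\norm{gu}$ times the length of the component of $gv$ perpendicular to $gu$. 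I would carry out the same analysis at the source to compute the norm of the velocity $\tfrac{d}{ds}\P(\langle u+sv\rangle)|_{s=0}$ in the metric on $\P\langle u,v\rangle$, obtaining $\norm{u\wedge v}/\norm{u}^2$ by the identical area argument (here $\langle u,v\rangle$ is only two-dimensional, so the wedge norm $\norm{u\wedge v}$ gives the full parallelogram area). Taking the ratio of target speed to source speed yields
\begin{equation}
\norm{D_{\P(\langle u\rangle)} \P g_{|\P\langle u,v\rangle}}
=
\frac{\norm{gu\wedge gv}/\norm{gu}}{\norm{u\wedge v}/\norm{u}^2}
=
\frac{\norm{gu\wedge gv}}{\norm{u\wedge v}}\left(\frac{\norm{gu}}{\norm{u}}\right)^{-2},
\end{equation}
which is the claimed formula.

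The main point to handle carefully is the bookkeeping of the two metric normalizations — the one on the source projective line and the one at the image point — since the formula is genuinely a ratio of two Fubini–Study-type speeds and a sign error or a dropped factor of $\norm{u}$ would corrupt the exponent $-2$. I expect the only genuine subtlety to be verifying the identity $\norm{gu\wedge gv} = \norm{gu}\cdot\norm{\,(gv)^{\perp gu}}$ relating the wedge norm to the perpendicular component, which is the geometric heart of the argument and is where the factor $\norm{gu\wedge gv}/\norm{u\wedge v}$ arises; everything else is a direct chain-rule computation in affine charts. Since the statement involves only a single linear map $g$ and no dynamics, there is no measure-theoretic or regularity obstacle here, and the lemma is entirely elementary and self-contained.
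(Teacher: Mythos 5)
Your argument is correct and is essentially the paper's computation in different clothing: the paper identifies $T_{L}\P\R^{d}$ with $\Hom(L,\R^{d}/L)$ and uses the isometry $\varphi\mapsto \frac{u}{\norm{u}}\wedge\varphi(u)$ into $L\wedge\R^{d}$, while you differentiate the quotient map $\R^{d}\setminus\{0\}\to\P^{d-1}(\R)$ in affine charts, but both proofs come down to the same identity $\norm{w\wedge h}=\norm{w}\cdot\norm{h^{\perp w}}$ and the same ratio of speeds $\bigl(\norm{gu\wedge gv}/\norm{gu}^{2}\bigr)\big/\bigl(\norm{u\wedge v}/\norm{u}^{2}\bigr)$. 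One small slip: in your displayed chain the middle term has $\norm{gu\wedge gv}/\norm{gu}$ in the numerator, whereas your own description of the derivative of the projectivization (orthogonal component of length $\norm{gu\wedge gv}/\norm{gu}$ further rescaled by $1/\norm{gu}$) gives $\norm{gu\wedge gv}/\norm{gu}^{2}$; with that correction the middle and right-hand expressions agree and the conclusion stands.
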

			\begin{proof}
				For any point
				$ L \in \P \R^{d} $
				(identified with a line
				in $ \R^{d} $),
				the tangent space
				of
				$ \P \R^{d} $
				at
				$ L $ 
				is canonically identified
				with
				the space of linear maps
				$ \Hom (L, \R^{d} / L) $.

				The derivative
				$
				D_{L} \P g:
				T_{L} \P \R^{d}
				\to
				\T_{gL} P \R^{d}
				$
				of this map
				at the point
				$ L $
				is canonically identified
				with
				the map
				\begin{align}
					\Hom
					(L, \R^{d}/L)
					&\to
					\Hom
					(gL, \R^{d}/gL)
					\\
					\varphi
					&\mapsto
					g
					\circ
					\varphi
					\circ
					g^{-1} 
					.
				\end{align} 
				A choice of Euclidian norm
				$ \norm{\cdot} $ 
				on $ \R^{d} $
				induces norms on
				$ L, \R^{d}/L $ 
				and
				$
				\Hom
				(L, \R^{d}/L)
				$,
				and this defines
				a metric on
				$ T \P \R^{d} $.

				A choice of
				vector
				$ u \in L $
				defines an isometry
				\begin{align}
					\Hom
					(L, \R^{d}/L)
					&\to
					L \wedge \R^{d} 
					\\
					\varphi
					&\mapsto
					\frac{u}{\norm{u}}
					\wedge \varphi(u)
					,
				\end{align} 
				where
				$ L \wedge \R^{d}
				\subset
				\wedge^{2} \R^{d}$
				is equiped with the norm
				on
				$\wedge^{2} \R^{d}$
				induced by the norm on
				$ \R^{d}$.

				For
				$ \varphi \in
				T_{L} \P \R^{d}
				=
				\Hom
				(L, \R^{d}/L)
				$
				and any
				$ u \in L \setminus \left\{ 0 \right\} $
				we have:
				\begin{equation}
					\norm{\varphi}
					_{\Hom(L,\R^{d} / L)}
					=
					\frac{
						\norm{\varphi(u)}
						_{\R^{d} / L}
						}{
						\norm{u}
					}
					=
					\frac{
						\norm{
							u \wedge
						\varphi(u)}_{\wedge^{2} \R^{d} } 
						}{
						\norm{u}^{2}
					}
					,	
				\end{equation}
				and
				\begin{equation}
					\norm{D_{L} \P g (\varphi)}
					_{\Hom(gL,\R^{d} / gL)}
					=
					\frac{
						\norm{g \varphi g^{-1}
						(g u)}_{\R^{d} / gL}
						}{
						\norm{g u}
					}
					=
					\frac{
						\norm{g u \wedge
						g \varphi(u)}_{\wedge^{2} \R^{d} } 
						}{
						\norm{g u}^{2}
					}
					.
				\end{equation}
				Now, given two
				non-collinear
				vectors
				$ u,v \in \R^{d} $,
				the tangent space
				at
				$ L = \P \ps{u} $
				of
				$ P = \P \ps{u,v} $
				is generated by
				$
				\varphi : u \mapsto v
				\in
				\Hom(L,\R^{d} / L)
				$.
				By the previous
				equalities
				we have:
				\begin{equation}
					\norm{D_{L} \P g_{|P }}
					=
					\frac{
						\norm{D_{L} \P g_{|P }  (\varphi)}
						}{
						\norm{\varphi}
					}
					=
					\frac
					{
						\norm{
							g u \wedge g v
						}
					}
					{
						\norm{
							u \wedge v
						}
					}
					\left( 
					\frac
					{
						\norm{
							g u
						}
					}
					{
						\norm{
							u
						}
					}
					\right)^{-2} 
					.
				\end{equation}
			\end{proof}

\subsection{Relationship with thermodynamic formalism} 
\label{subsec:relation_thermo}

We show now the relation of Oseledets Lyapunov exponents to the thermodynamic formalism introduced in \autoref{sec:thermo}.

For a matrix $ g \in \SL(d, \R) $,
denote by
$ \lambda_{1}(g) \ge \dots \lambda_{d}(g) $
the modulus of the eigenvalues of $ g $
and
define the weight
$ \varphi(g) =
\log \frac{\lambda_{1}(g)}{\lambda_{2}(g)} $.
\begin{remark}
    Note that one can consider different weight functions, e.g. in \cite[Cor. 1.5]{bridgemanPressureMetricAnosov2015} they consider the weight $ \varphi(g) =
\log\lambda_{1}(g) $.
\end{remark}

Now we recall how to associate a Hölder continuous
function on $T^1X$ to an Anosov representation of $\pi_1(X)$. Recall that the periodic orbits
of the geodesic flow are
in bijection with the conjugacy
classes of elements of $ \pi_1(X)$.

\begin{theorem}[{\cite{pozzettiConformalityRobustClass2019}}, Prop. B7]
\label{thm:frho}
Let $\rho:\pi_1(X)\to \SL(d,\R)$ be a $(1,2)$-Anosov representation. Then there exists a positive
Hölder continuous
function $ f_{\rho}:T^1X\to \R $
associated to
$\rho$
and the weight
$ \varphi $
such that
for all $\Psi_t$-periodic orbits $ a \in O $
and every $ \gamma \in \pi_1(X) $
associated to $ a $ we can write the period of $a$ for the reparametrized flow $\Psi^{f_\rho}$ as 
\begin{equation}
	p_{f_{\rho}}(a)
	=
	\varphi(\rho(\gamma))
	.
\end{equation}

\end{theorem}

\begin{remark}
\label{rem:fuchsian}
    Note that the constant function $\mathbf{1}$ on $T^1X$ is the same as $f_{j_X}$, i.e. the function given by the \autoref{thm:frho} from the uniformizing representation $j_X$ of $X$.
\end{remark}

Now consider a $ (1,1,2) $-hyperconvex representation $ \rho:\pi_1(X)\to \SL(d,\R) $. Since it is crucial for us to know the entropy of $f_{\rho}$, we recall the following fact.

\begin{theorem}[{\cite{pozzettiConformalityRobustClass2019}}, Cor. 9.1]
\label{thm:entropy}
Let $\rho:\pi_1(X)\to \SL(d,\R)$ be a $ (1,1,2) $-hyperconvex representation.
The entropy
$ h_{f_{\rho}} $
of the associated
flow $ \Psi^{f_{\rho} } $
satisfies
$ h_{f_{\rho}} = 1 $.
\end{theorem}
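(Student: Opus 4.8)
The plan is to identify $h_{f_\rho}$ with the Hausdorff dimension of the limit curve $S_\rho = \xi_1(\binf\pi_1(X))$ and then to read off that dimension from its $C^{1+\holder}$ regularity. First, I would unwind the definition of the entropy of a reparametrised flow recalled in \autoref{sec:thermo}: since the period of the orbit associated to $[\gamma]$ is $p_{f_\rho}(a)=\varphi(\rho(\gamma))=\log\bigl(\lambda_1(\rho(\gamma))/\lambda_2(\rho(\gamma))\bigr)$, the counting formula for $h_f$ gives directly
\[
h_{f_\rho}=\lim_{T\to+\infty}\frac1T\log\#\Bigl\{[\gamma]\ :\ \log\frac{\lambda_1(\rho(\gamma))}{\lambda_2(\rho(\gamma))}\le T\Bigr\},
\]
so that $h_{f_\rho}$ is exactly the critical exponent of the Poincaré series $\sum_{[\gamma]}e^{-s\varphi(\rho(\gamma))}$, the abscissa at which it passes from convergence to divergence.

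Second, I would set up the conformal dynamics of $\rho(\pi_1(X))$ on the curve $S_\rho\subset\P^{d-1}(\R)$. The element $\rho(\gamma)$ acts projectively with attracting fixed point $\xi_1(\gamma^+)$, and by \autoref{thm:hyperconvex} the curve $S_\rho$ is tangent at $\xi_1(x)$ to the line $\P\xi_2(x)$. Hence the tangent direction to $S_\rho$ at the attracting fixed point is the eigendirection of $\rho(\gamma)$ for $\lambda_2$, and the derivative of $\rho(\gamma)$ along $S_\rho$ there contracts at rate $\lambda_2(\rho(\gamma))/\lambda_1(\rho(\gamma))=e^{-\varphi(\rho(\gamma))}$, up to a bounded multiplicative factor coming from the $\beta$-Hölder part of the regularity. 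This is precisely the statement that the action on the $C^{1+\holder}$ curve $S_\rho$ is conformal with contraction exponents $-\varphi(\rho(\gamma))$.

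Third, I would feed this conformality into a Bowen/Patterson--Sullivan dimension argument. The contraction estimate produces a $\rho$-equivariant conformal family of measures on $S_\rho$, and a mass-distribution (Gibbs) argument shows that bounded multiplicative distortion does not affect exponents; consequently the critical exponent of $\sum_{[\gamma]}e^{-s\varphi(\rho(\gamma))}$ coincides with the Hausdorff dimension of $S_\rho$ for the induced metric, i.e.
\[
h_{f_\rho}=\dim_{\mathrm{Hff}} S_\rho.
\]
Since by \autoref{thm:hyperconvex} the set $S_\rho$ is a one-dimensional $C^{1+\holder}$ submanifold of $\P^{d-1}(\R)$, its Hausdorff dimension is exactly $1$, and therefore $h_{f_\rho}=1$.

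The hard part will be the second step: matching the leafwise derivative of the projective holonomy along $S_\rho$ with the spectral ratio $\lambda_2/\lambda_1$ \emph{uniformly} over $\gamma$, and then upgrading this pointwise conformality to a genuine dimension formula. This is where the full force of $(1,1,2)$-hyperconvexity enters, through the sharp tangency $T_{\xi_1(x)}S_\rho=T_{\xi_1(x)}\P\xi_2(x)$ and the $C^{1+\holder}$ control that keeps the distortion along arbitrarily long geodesics bounded; without this uniform control one only obtains inequalities between the counting exponent and $\dim_{\mathrm{Hff}}S_\rho$ rather than the equality needed to conclude.
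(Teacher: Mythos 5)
The paper gives no proof of this statement: it is imported verbatim from \cite{pozzettiConformalityRobustClass2019} (Cor.~9.1), so there is no internal argument to compare yours against. Your sketch does reconstruct, in outline, the strategy of that reference: read $h_{f_\rho}$ as the orbit-growth rate for the root $\varphi=\log(\lambda_1/\lambda_2)$, use $(1,1,2)$-hyperconvexity (via the tangency $T_{\xi_1(x)}S_\rho=T_{\xi_1(x)}\P\xi_2(x)$ of \autoref{thm:hyperconvex}) to see that the action on the limit curve is asymptotically conformal with contraction governed by $\varphi$, identify the critical exponent with $\dim_{\mathrm{Hff}}S_\rho$ by a Patterson--Sullivan argument, and conclude $\dim_{\mathrm{Hff}}S_\rho=1$ from the $C^{1+\holder}$ regularity. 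Two caveats, though. First, the step you defer to a ``mass-distribution (Gibbs) argument'' --- the equality of the critical exponent with the Hausdorff dimension of the limit set --- is not a routine consequence of bounded distortion; it is precisely the main theorem of \cite{pozzettiConformalityRobustClass2019}, proved there through uniform coarse ``shadow'' estimates phrased in terms of the singular value gap $\sigma_1/\sigma_2$ along all of $\Gamma$, not through derivatives at attracting fixed points. Second, and related: your Poincar\'e series runs over conjugacy classes weighted by eigenvalue ratios (which is what $h_{f_\rho}$ literally counts), while the dimension argument requires the series over all group elements weighted by $\log(\sigma_1(\rho(\gamma))/\sigma_2(\rho(\gamma)))$; the coincidence of these two exponents for Anosov representations is standard but is a genuine extra input that must be supplied (compare the role played by Lemma~\ref{lem:spectral_radius} elsewhere in this paper). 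With those two ingredients made explicit the argument is the correct one; as written, the central equality $h_{f_\rho}=\dim_{\mathrm{Hff}}S_\rho$ is asserted rather than proved.
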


We can now prove the relation stated in \autoref{thm:relations} between the Oseledets Lyapunov exponents associated to $X$ and $\rho$ and the renormalized intersection $J(\mathbf{1},f_\rho)$ (see the definition in \autoref{eq:renint}) of the constant function $\mathbf{1}:T^1X\to\{1\}$ and $f_\rho$.

\begin{proposition}
\label{prop:J_relation}
If $\rho:\pi_1(X)\to \SL(d,\R)$ is a  (1,1,2)-hyperconvex
Anosov representation, then
    \[J(\mathbf{1},f_\rho)=\lambda_1(X,\rho)-\lambda_2(X,\rho).\]
\end{proposition}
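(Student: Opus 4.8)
The plan is to unwind the definition of the renormalized intersection $J(\mathbf{1}, f_\rho)$ from \eqref{eq:renint} and identify each factor with a Lyapunov-theoretic quantity. By definition $J(\mathbf{1}, f_\rho) = \frac{h_{f_\rho}}{h_{\mathbf{1}}} I(\mathbf{1}, f_\rho)$. First I would handle the entropy ratio. By \autoref{thm:entropy} we have $h_{f_\rho} = 1$ since $\rho$ is $(1,1,2)$-hyperconvex, and by \autoref{rem:fuchsian} the constant function $\mathbf{1}$ equals $f_{j_X}$, so $h_{\mathbf{1}} = h_{f_{j_X}}$; applying \autoref{thm:entropy} to the uniformizing representation $j_X$ (which is itself $(1,1,2)$-hyperconvex) gives $h_{\mathbf{1}} = 1$. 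Thus the prefactor $h_{f_\rho}/h_{\mathbf{1}}$ is exactly $1$, and the whole problem reduces to showing that the plain intersection $I(\mathbf{1}, f_\rho) = \int_{T^1X} \frac{f_\rho}{\mathbf{1}}\, d\mu_{\mathbf{1}} = \int_{T^1X} f_\rho\, d\mu_{\mathbf{1}}$ equals $\lambda_1(X,\rho) - \lambda_2(X,\rho)$.

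The key observation is that $\mu_{\mathbf{1}}$, the measure of maximal entropy for the reparametrized flow $\Psi^{\mathbf{1}} = \Psi$ (the unreparametrized geodesic flow), is exactly the normalized Liouville measure $v_L$ on $T^1X$, which is precisely the measure used in Oseledets theorem to define the Lyapunov exponents. So I need to compute $\int_{T^1X} f_\rho\, dv_L$. The plan is to relate the time-average of $f_\rho$ along geodesic orbits to the weight $\varphi$ via \autoref{thm:frho}: for a periodic orbit $a$ associated to $\gamma$, the period $p_{f_\rho}(a) = \int_0^{p(a)} f_\rho(\Psi_s x)\, ds = \varphi(\rho(\gamma)) = \log \frac{\lambda_1(\rho(\gamma))}{\lambda_2(\rho(\gamma))}$. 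By the Birkhoff ergodic theorem applied to the ergodic measure $v_L$, the space-average $\int f_\rho\, dv_L$ equals the limiting time-average $\lim_{t\to\infty} \frac{1}{t}\int_0^t f_\rho(\Psi_s x)\, ds$ for $v_L$-a.e.\ $x$. Hence it suffices to show this time-average equals $\lambda_1(X,\rho) - \lambda_2(X,\rho)$.

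To make the final identification I would connect the time-average of $f_\rho$ to the Oseledets exponents. The integral $\int_0^t f_\rho(\Psi_s x)\, ds$ measures the growth of $\log \frac{\sigma_1}{\sigma_2}$ of the parallel-transport cocycle $\ti\Psi_t$ along the geodesic, since $f_\rho$ was built from the weight $\varphi(g) = \log \frac{\lambda_1(g)}{\lambda_2(g)}$ recording the top two eigenvalue moduli, and along periodic orbits the singular-value and eigenvalue growths coincide asymptotically. Using \autoref{rmk:lyap_def}, which expresses $\lambda_i = \lim_{t\to\infty}\frac{1}{t}\log\sigma_i(\ti\Psi_t)$, I would conclude $\lim_{t\to\infty}\frac{1}{t}\int_0^t f_\rho(\Psi_s x)\, ds = \lim_{t\to\infty}\frac{1}{t}\log\frac{\sigma_1(\ti\Psi_t)}{\sigma_2(\ti\Psi_t)} = \lambda_1(X,\rho) - \lambda_2(X,\rho)$, combining with the Birkhoff step above.

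\emph{The main obstacle} I anticipate is the final step: justifying that the Birkhoff time-average of the Hölder function $f_\rho$ coincides with $\lambda_1 - \lambda_2$. The function $f_\rho$ is characterized in \autoref{thm:frho} only through its periods (integrals over closed orbits), so passing from this periodic-orbit data to the $v_L$-almost-everywhere cocycle growth requires care. One clean route is to invoke that the periods $p_{f_\rho}(a) = \varphi(\rho(\gamma))$ determine the Livsic cohomology class of $f_\rho$, so $f_\rho$ is cohomologous to the infinitesimal generator of the eigenvalue-gap cocycle; then the integral against the flow-invariant measure $v_L$ depends only on this class, and equals the Lyapunov gap by Oseledets. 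Making precise the interplay between the eigenvalue growth (periods) and the singular-value growth (Lyapunov exponents of the cocycle with respect to $v_L$) is the delicate point, and I would lean on the $(1,1,2)$-hyperconvexity through \autoref{prop:symplicity}, which guarantees $\dim E_\rho^1 = \dim E_\rho^2 = 1$ so that $\lambda_1$ and $\lambda_2$ are genuinely the top two distinct exponents with multiplicity one, making the gap well-defined and matching the weight $\varphi$.
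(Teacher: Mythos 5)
Your overall route is the same as the paper's up to the decisive step: you reduce $J(\mathbf{1},f_\rho)$ to $\int_{T^1X} f_\rho\, dv_L$ using $h_{f_\rho}=h_{\mathbf{1}}=1$ and $\mu_{\mathbf{1}}=v_L$, then express that integral as a Birkhoff time average and plan to feed in the period formula $p_{f_\rho}(a)=\log\lambda_1(\rho(\gamma))-\log\lambda_2(\rho(\gamma))$ from \autoref{thm:frho}. All of that is correct. The genuine gap is the final identification of the time average with $\lambda_1(X,\rho)-\lambda_2(X,\rho)$, which you flag as the main obstacle but do not actually close. The Livsic suggestion does not do it: the quantity $\log\bigl(\sigma_1/\sigma_2\bigr)(\ti{\Psi}_t)$ is only a \emph{subadditive} cocycle, so there is no ``infinitesimal generator of the eigenvalue-gap cocycle'' to which $f_\rho$ could be declared cohomologous; the existence of a H\"older function whose periods encode the eigenvalue gap is precisely what \autoref{thm:frho} already provides, so invoking Livsic here is circular and yields no new information about the $v_L$-average. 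Likewise, ``along periodic orbits the singular-value and eigenvalue growths coincide asymptotically'' is a statement about powers of a fixed element; what is actually needed is a comparison along the \emph{family} $\rho(\gamma_T)$, where $\gamma_T$ are the closed-up approximations (via the Anosov closing lemma, of length $T+O(1)$) of the segments $\Psi_{[0,T]}(x)$ of a single $v_L$-generic orbit.

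Concretely, two nontrivial inputs are missing. First, one must show
\begin{equation}
\lim_{T\to\infty}\frac{1}{T}\log\lambda_i(\rho(\gamma_T))
=\lim_{T\to\infty}\frac{1}{T}\log\sigma_i(\rho(\gamma_T)), \qquad i=1,2,
\end{equation}
which the paper proves in \autoref{lem:spectral_radius} by applying, to both $\rho$ and $\wedge^2\rho$ (using that $\wedge^2\rho$ is $1$-Anosov when $\rho$ is $2$-Anosov and $\sigma_1(\wedge^2 g)=\sigma_1(g)\sigma_2(g)$), the Benoist--Quint estimate $\delta(g)\norm{g}\le\lambda_1(g)$ together with the uniform singular-value gap from the Anosov property, the convergence of Cartan attractors to the limit maps, and the transversality of \autoref{thm:boundary_maps}. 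Second, one must show $\lim_T \frac{1}{T}\log\sigma_1(\rho(\gamma_T))=\lambda_1(X,\rho)$, i.e.\ that the monodromies $\rho(\gamma_T)$ and the parallel-transport operators $\ti{\Psi}_T$ over the generic orbit differ by bounded operators, which is \autoref{lem:relationlyapsingular}. Your appeal to \autoref{prop:symplicity} (simplicity of the top two exponents) is relevant context but is not a substitute for either of these steps; without them the bridge from eigenvalue data (periods) to singular-value data (Oseledets exponents) is not established.
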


\begin{proof}

    Note that by definition the constant function $ \mathbf{1} $
is the function associated to the geodesic flow $ \Psi_t $.
It is classical that the entropy of the geodesic
flow is one $ h_{\mathbf{1}} =1 $,
and that the measure of maximal
entropy is the Liouville measure
$ \mu_{\mathbf{1}} = v_{L} $.

Since by \autoref{thm:entropy} we have $h_{\mathbf{1}} = h_{f_{\rho} } = 1 $, the renormalized intersection is the same as the intersection, i.e.
\[ J(\mathbf{1}, f_{\rho}) = I(\mathbf{1}, f_{\rho})=\int_{T^1X} f_{\rho} d v_{L}.\]
Applying Birkhoff erdogic theorem, we obtain that 
for $v_{L}$-almost any $ x \in T^{1} X $
it holds
\begin{equation}
	\int_{T^1X} f_{\rho} d v_{L}
	=
	\lim_{T \to +\infty}
	\frac{1}{T}
	\int_{0}^{T}
	f_{\rho}(\Psi_{t}(x))
	dt
	.
\end{equation}
For every $ T > 0 $,
we approximate the geodesic segment
$ \Psi_{[0,T]}(x) $ by a closed geodesic
$ a_{T}(x) \in O $ of length  $ T + O(1) $
by closing it by a short geodesic arc
and we have
\begin{equation}
	\lim_{T \to +\infty}
	\frac{1}{T}
	\int_{0}^{T}
	f_{\rho}(\Psi_{t}(x))
	dx
	=
	\lim_{T \to +\infty}
	\frac{1}{T}
	p_{f_{\rho}}(a_{T}(x)).
\end{equation}
Moreover, if we denote by $ \gamma_{T}(x) \in \pi_1(X) $ the element corresponding to $ a_{T}(x) $, we can rewrite the previous expression using the characterizing property of $f_\rho$ given
\autoref{thm:frho} as
\begin{align}
	\lim_{T \to +\infty}
	\frac{1}{T}
	p_{f_{\rho}}(a_{T}(x))
	&=
	\lim_{T \to +\infty}
	\frac{1}{T}
	\varphi( \rho (\gamma_{T}(x))) \\
	&=
	\lim_{T \to +\infty}
	\frac{1}{T}
	\left(
	\log \lambda_{1}(\rho(\gamma_{T}(x)))
	-
	\log \lambda_{2}(\rho(\gamma_{T}(x)))
	\right).
\end{align}
Summarizing, we have show that 
\begin{equation}
	J(\mathbf{1},f_{\rho})
	=
	\lim_{T \to +\infty}
	\frac{1}{T}
	(
	\log \lambda_{1}(\rho(\gamma_{T}(x)))
	-
	\log \lambda_{2}(\rho(\gamma_{T}(x)))
	)
\end{equation}
for almost any $x\in T^1X$.

Since $ (\gamma_{T}(x)) $ is a quasi-geodesic asymptotic to the geodesic ray
$\Psi_{t}(x)$
and  $ \wedge^{2} \rho $ is 1-Anosov
if $ \rho $ is $ 2 $-Anosov, we can apply \autoref{lem:spectral_radius} and \autoref{lem:relationlyapsingular} below
to $ \rho $ and $ \wedge^{2} \rho $ and obtain 
\begin{align}
		&\quad \lim_{T \to +\infty}
		\left(
		\frac{1}{T}
		\log \lambda_{1}(\rho(\gamma_{T}(x)))
		-
		\frac{1}{T}
		\log \lambda_{2}(\rho(\gamma_{T}(x)))\right)
		\\
		&=\lim_{T \to +\infty}
		\left(
		\frac{1}{T}
		\log \sigma_{1}(\rho(\gamma_{T}(x)))
		-
		\frac{1}{T}
		\log \sigma_{2}(\rho(\gamma_{T}(x)))\right)
		\\
  &=\lambda_1(X,\rho)-\lambda_2(X,\rho).
\end{align}
where in the last line we have used that $\lambda_1(X,\wedge^{2} \rho)=\lambda_1(X,\rho)+\lambda_2(X,\rho)$.
\end{proof}

We give here self-contained proofs of the two results, which are probably well known, needed at the end of the previous proof to conclude.
Recall that a sequence
$(\gamma_n)$
in 
$ \pi_1 (X) $
is a quasi-geodesic
if
\begin{equation}
    A^{-1} n
    - C
    \le
    \abs{\gamma}
    \le
     A n
    + C
    ,
\end{equation}
for some constants
$ A > 0 $
and
$ C \in \R $.
By definition
a quasi-geodesic
$(\gamma_n)$
is asymptotic to a unique
$\gamma_{\infty}$
in
$\binf \pi_1 (X)$,
its limit point.

\begin{lemma}
	\label{lem:spectral_radius}
	Let $ \rho:\pi_1(X)\to \SL(d,\R) $
	be a 1-Anosov representation 
	and let
	$ (\gamma_{n}) $
	be a quasi-geodesic
	in $ \pi_1(X) $. Then
	\begin{equation}
		\lim_{n \to +\infty}
		\frac{1}{n}
		\log \lambda_{1}(\rho(\gamma_{n}))
		=
		\lim_{n \to +\infty}
		\frac{1}{n}
		\log \sigma_{1}(\rho(\gamma_{n})).
	\end{equation}
\end{lemma}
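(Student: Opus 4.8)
The plan is to establish the two inequalities between the limits separately. One direction is immediate: the spectral radius of a matrix is bounded above by its operator norm, so $\lambda_1(\rho(\gamma_n))\le\sigma_1(\rho(\gamma_n))$ for every $n$, whence $\tfrac1n\log\lambda_1(\rho(\gamma_n))\le\tfrac1n\log\sigma_1(\rho(\gamma_n))$. It therefore suffices to prove the reverse estimate
\[
\limsup_{n\to\infty}\frac1n\log\sigma_1(\rho(\gamma_n))\le\liminf_{n\to\infty}\frac1n\log\lambda_1(\rho(\gamma_n)),
\]
since combined with the pointwise bound this forces both limits to exist and to coincide. Concretely, I would look for an upper bound of the shape $\log\sigma_1(\rho(\gamma_n))\le\log\lambda_1(\rho(\gamma_n))+\varepsilon_n$ with $\varepsilon_n=o(n)$.

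The input that produces such a bound is proximality. Since $\rho$ is $1$-Anosov, for every infinite order $\gamma$ the matrix $\rho(\gamma)$ is proximal: its eigenvalue of maximal modulus $\lambda_1(\rho(\gamma))$ is simple, its eigenline is the attracting line $\xi_1(\gamma^+)$, and the span of the remaining eigenspaces is the repelling hyperplane $\xi_{d-1}(\gamma^-)$, where $\gamma^\pm\in\binf\pi_1(X)$ are the attracting and repelling fixed points of $\gamma$. Writing $g_n=\rho(\gamma_n)$ and decomposing $\R^d=\xi_1(\gamma_n^+)\oplus\xi_{d-1}(\gamma_n^-)$, the norm of a vector in terms of its two components is distorted precisely by the angle $\theta_n:=\angle\big(\xi_1(\gamma_n^+),\xi_{d-1}(\gamma_n^-)\big)$ between these invariant subspaces. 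The standard comparison of the top singular value with the spectral radius for proximal matrices then gives
\[
\log\sigma_1(g_n)=\log\lambda_1(g_n)+\log\frac{1}{\sin\theta_n}+\delta_n,
\]
where $\delta_n\to0$ as soon as the spectral gap $\log\big(\lambda_1/\lambda_2\big)(g_n)\to\infty$. Thus the whole statement reduces to showing that $\log(1/\sin\theta_n)=o(n)$ and that the spectral gap diverges; as I explain next, both follow from a single geometric fact about the quasi-geodesic.

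The hard part, and the main obstacle, is this control of the angles $\theta_n$. Using the transversality of the boundary maps (\autoref{thm:boundary_maps}, applied to $\xi_1$ and $\xi_{d-1}$) together with their H\"older continuity and the compactness of $\binf\pi_1(X)$, one bounds $\sin\theta_n$ from below by a positive power of the circle distance $d_{\Sp^1}(\gamma_n^+,\gamma_n^-)$ between the two fixed points of $\gamma_n$, so it is enough to prove $\log\big(1/d_{\Sp^1}(\gamma_n^+,\gamma_n^-)\big)=o(n)$. Transporting this to the hyperbolic plane through $j_X$, the quantity $-\log d_{\Sp^1}(\gamma_n^+,\gamma_n^-)$ is comparable to the distance $d_{\HH^2}(o,A_n)$ from a fixed basepoint $o$ to the axis $A_n$ of $j_X(\gamma_n)$ (the geodesic joining $\gamma_n^-$ to $\gamma_n^+$). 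The key geometric input is that along the quasi-geodesic $(\gamma_n)$ the fixed points $\gamma_n^\pm$ converge to two distinct endpoints, namely the forward limit $\gamma_\infty$ and the backward limit of $(\gamma_n^{-1})$; consequently the axes $A_n$ remain within bounded distance of $o$, which at once yields $\log(1/\sin\theta_n)=O(1)=o(n)$ and, via the standard linear growth of eigenvalue gaps of Anosov representations in the translation length $\ell(\gamma_n)\asymp d_{\HH^2}(o,j_X(\gamma_n)o)-2\,d_{\HH^2}(o,A_n)\to\infty$, the divergence of the spectral gap that makes $\delta_n\to0$. Once this separation of the fixed points (equivalently, the non-escape of the axes) is in hand, combining it with the comparison of the previous paragraph and the trivial inequality completes the proof. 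I expect this separation statement to be the only genuinely non-formal step.
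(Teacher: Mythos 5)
Your overall architecture is reasonable and close in spirit to the paper's: the inequality $\lambda_1\le\sigma_1$ is free, and the reverse direction should come from transversality of the boundary maps together with the fact that the quasi-geodesic has two distinct limit points at infinity, so that the relevant distinguished subspaces of $\rho(\gamma_n)$ stay uniformly transverse. The genuine gap is in the linear-algebra comparison you invoke. The claimed identity
\begin{equation}
\log\sigma_1(g_n)=\log\lambda_1(g_n)+\log\frac{1}{\sin\theta_n}+\delta_n,
\end{equation}
with $\theta_n$ the angle between the attracting \emph{eigenline} and the repelling \emph{eigen-hyperplane} and with $\delta_n\to 0$ as soon as the eigenvalue gap $\lambda_1/\lambda_2\to\infty$, is not a correct fact about proximal matrices. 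Take for instance $g_M=\mathrm{diag}(\lambda^{2})\oplus\left(\begin{smallmatrix}\lambda^{-1}&M\\0&\lambda^{-1}\end{smallmatrix}\right)$: the attracting line and repelling hyperplane are orthogonal, the eigenvalue gap $\lambda_1/\lambda_2=\lambda^{3}$ is as large as you like, yet $\sigma_1(g_M)\ge M$ can be made arbitrarily larger than $\lambda_1(g_M)=\lambda^{2}$. The eigenvalue gap says nothing about the norm of the restriction of $g$ to the repelling hyperplane, which can dwarf $\lambda_1$; so your reduction ``everything follows once the spectral gap diverges and $\log(1/\sin\theta_n)=o(n)$'' does not close.

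The correct comparison --- and the one the paper uses --- replaces eigen-data by singular-value data. Benoist--Quint (Lemma 14.14) states that if $\sigma_2(g)/\sigma_1(g)<\delta(g)^2$, where $\delta(g)$ is half the projective distance between the top singular direction $U^1(g)$ (eigenspace of $gg^{*}$) and the slow hyperplane $S_{d-1}(g)$ (from $g^{*}g$), then $\lambda_1(g)\ge\delta(g)\,\sigma_1(g)$. The hypothesis is a \emph{singular value} gap, which is exactly what $1$-Anosov provides along a quasi-geodesic ($\sigma_2/\sigma_1\le Ce^{-an}$), and the transversality needed is between $U^1(\rho(\gamma_n))$ and $S_{d-1}(\rho(\gamma_n))$, not between the eigen-subspaces. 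The paper obtains it from \cite[Lemma 4.7]{bochiAnosovRepresentationsDominated2019}: $U^1(\rho(\gamma_n))\to\xi_1(\gamma_\infty)$ and $S_{d-1}(\rho(\gamma_n))\to\xi_{d-1}(\eta)$ with $\eta=\lim\gamma_n^{-1}\ne\gamma_\infty$ along a subsequence (convergence group property), whence a uniform angle bound by transversality of the limit maps. Your geometric input (convergence of the fixed points $\gamma_n^{\pm}$, non-escape of the axes) is correct and provable, but it tracks the eigen-subspaces and feeds into a false lemma; moreover the detour through $d_{\Sp^1}(\gamma_n^{+},\gamma_n^{-})$ and H\"older continuity is unnecessary once one has convergence to two distinct transverse limits. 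Rerouting the argument through the singular subspaces and the Benoist--Quint lemma repairs it and essentially reproduces the paper's proof.
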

\begin{proof}
	First observe
	that the two limits
	exists,
	the first by a similar reasoning
	as in the first part of the proof of \autoref{prop:J_relation}
	and the second by a
	subadditive argument.
	In particular it is enough
	to prove the equality
	for a subsequence.

	For a matrix $ g \in \SL (n, \R) $ with $\sigma_2(g)<\sigma_1(g)$,
	let $ U^1(g) $ be the eigenspace
	associated to the
	greatest eigenvalue of
	$ g g^{*} $ and
	$ S_{d-1}(g) $ the sum of the eigenspaces
	associated to the
	$ d - 1 $ lowest eigenvalues
	of $ g^{*} g $.
	Let $ \delta(g) $ be half the projective
	distance between $ U^1(g) $
	and $ S_{d-1}(g) $.
	According to
	\cite[Lemma 14.14]{benoistRandomWalksReductive2016}
	if
	\begin{equation}
		\frac{\sigma_{2}(g)}{\sigma_{1}(g)} < \delta(g)^{2}
	\end{equation}
	then
	$ \delta(g) \norm{g} \le \lambda_{1}(g) $.

	Now if $ \rho $ is $ 1 $-Anosov
	and $ (\gamma_{n}) $ is a quasi-geodesic,
	by definition there exist
	constants
	$ C,a > 0 $ such that
	\begin{equation}
		\frac{\sigma_{2}(\rho(\gamma_{n}))}
		{\sigma_{1}(\rho(\gamma_{n}))}
		\le
		C e^{- a n}
		.
	\end{equation}
	Denoting by
	$ \xi_{1} $
	and
	$ \xi_{d-1} $
	the limit maps of
	$ \rho $
	and by
	$ \gamma_\infty $
	the limit point
	in
	$ \binf \pi_1(X) $
	associated
	to $ (\gamma_{n}) $,
	by \cite[Lemma 4.7]{bochiAnosovRepresentationsDominated2019} we have
	\begin{equation}
		\xi_{1}(\gamma_\infty)
		=
		\lim
		_{n \to \infty}
		U^1(\rho(\gamma_{n}))
		,
	\end{equation}
	and
	\begin{equation}
		\xi_{d-1}(\gamma_\infty)
		=
		\lim
		_{n \to \infty}
		S_{d-1}(\rho(\gamma_{n})^{-1} )
		.
	\end{equation}
	On the other hand, up to taking a subsequence, $ (\gamma_{n}^{-1}) $ converges
	to a point $ \gamma_\infty^{-1} \neq \gamma_\infty $,
    because $ \pi_1(X) $ acts on its boundary
    as a uniform convergence group.  By \autoref{thm:boundary_maps}, i.e. by the transversality of the limit maps,
	we have hence that 
	$
	\xi_{1}(\gamma_\infty)
	\notin
	\xi_{d-1}(\gamma_\infty^{-1})
	$.
	Moreover	\begin{equation}
		\xi_{d-1}( \gamma_\infty^{-1} )
		=
		\lim
		_{n \to \infty}
		S_{d-1}(\rho(\gamma_{n}))
		.
	\end{equation}
	In particular, there exists
	$ \delta > 0 $ such that for
	$ n $ large enough,
	the distance between
	$ U^1(\rho(\gamma_{n})) $
	and
	$ S_{d-1}(\rho(\gamma_{n})) $
	is greater than $ 2 \delta $.
	As
	\begin{equation}
		\frac{\sigma_{2}(\rho(\gamma_{n}))}
		{\sigma_{1}(\rho(\gamma_{n}))}
		\to 0,
	\end{equation}
	for $ n $ large enough, by the general fact recalled above we have
	\begin{equation}
		\delta \norm{\rho(\gamma_{n})}
		\le
		\lambda_{1}(\rho(\gamma_{n}))
		.
	\end{equation}
	Since  $\sigma_{1} = \norm{\cdot} $, then 
	$
	\lambda_{1}(\rho(\gamma_{n}))
	\le
	\norm{\rho(\gamma_{n})}
	$ and so
	this implies that
	\begin{equation}
		\lim_{n \to +\infty}
		\frac{1}{n}
		\log \lambda_{1}(\rho(\gamma_{n}))
		=
		\lim_{n \to +\infty}
		\frac{1}{n}
		\log \sigma_{1}(\rho(\gamma_{n})).
	\end{equation}
\end{proof}

The next lemma is classical. 
\begin{lemma}
\label{lem:relationlyapsingular}
Let $ \rho:\pi_1(X)\to \SL(d,\R) $
	be a 1-Anosov representation 
	and let
	$ (\gamma_{T}) $
    in $ \pi_1(X) $
    be the sequence
    constructed above
    associated to the
	  geodesic ray $\Psi_T (x) $
    for a generic $x \in T^1 X$ . Then
\begin{equation}
		\lim_{T \to +\infty}
		\frac{1}{T}
		\log \sigma_{1}(\rho(\gamma_{T}))
		=
		\lambda_{1}(X,\rho)
	\end{equation}
	
\end{lemma}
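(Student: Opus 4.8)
The plan is to reduce the statement to the singular-value description of the top Lyapunov exponent and then to identify parallel transport along a long geodesic segment with the group holonomy of the corresponding element. By \autoref{rmk:lyap_def}, for Liouville-almost every $x\in T^1X$ we have
\begin{equation}
\lambda_1(X,\rho)=\lim_{T\to+\infty}\frac{1}{T}\log\sigma_1\left(\ti{\Psi}_{T,x}\right),
\end{equation}
where $\ti{\Psi}_{T,x}\colon (E_\rho)_x\to (E_\rho)_{\Psi_T x}$ is the parallel transport of the lifted geodesic flow from $x$ to $\Psi_T x$. Thus it is enough to show that $\frac{1}{T}\log\sigma_1(\ti{\Psi}_{T,x})$ and $\frac{1}{T}\log\sigma_1(\rho(\gamma_T))$ have the same limit.

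To compare the two quantities, I would equip $E_\rho=(T^1\ti{X}\times\R^d)/\pi_1(X)$ with the measurable constant norm attached to a fundamental domain $D\subset T^1\ti{X}$, as in the proof of \autoref{prop:chi_p_2_1}; this is legitimate since the Lyapunov exponents are independent of the norm. Lifting $x$ to its representative $\ti{x}\in D$ and writing $\ti{y}\in D$ for the representative of $\Psi_T x$, parallel transport keeps the $\R^d$-coordinate constant, so if $\delta_T\in\pi_1(X)$ is the unique element with $\ti{\Psi}_T\ti{x}=\delta_T\ti{y}$, then in these trivializations $\ti{\Psi}_{T,x}$ is the matrix $\rho(\delta_T)^{-1}$, whence $\sigma_1(\ti{\Psi}_{T,x})=1/\sigma_d(\rho(\delta_T))$. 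Because the closing arc has length bounded by the diameter of $X$ and $D$ has bounded diameter, the element $\gamma_T$ produced by the closing construction differs from $\delta_T$ by multiplication by an element of a fixed finite subset of $\pi_1(X)$; by submultiplicativity of the operator norm this forces $\sigma_i(\rho(\gamma_T))$ and $\sigma_i(\rho(\delta_T))$ to agree up to a uniform multiplicative constant for every $i$, an error which disappears after dividing by $T$.

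Combining these facts gives $\lim_{T}\frac{1}{T}\log\sigma_1(\ti{\Psi}_{T,x})=-\lim_T\frac{1}{T}\log\sigma_d(\rho(\gamma_T))$. To recognise the right-hand side as $\lim_T\frac{1}{T}\log\sigma_1(\rho(\gamma_T))$, I would use the symmetry of the Lyapunov spectrum: the flip $v\mapsto -v$ reverses the geodesic flow, preserves the Liouville measure, and conjugates $\ti{\Psi}_t$ to $\ti{\Psi}_{-t}$, so that $\lambda_i=-\lambda_{d+1-i}$; equivalently the growth rates of $\sigma_1(\rho(\gamma_T))$ and of $1/\sigma_d(\rho(\gamma_T))$ coincide and both equal $\lambda_1(X,\rho)$. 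The existence of the limit defining the right-hand side follows from the subadditive argument already invoked in \autoref{lem:spectral_radius}. I expect the main obstacle to be precisely the second step: the careful identification of $\ti{\Psi}_{T,x}$ with the holonomy $\rho(\delta_T)^{\pm 1}$ while keeping track of the merely measurable norm, of the fundamental-domain and closing-arc errors, and of the orientation convention responsible for the inverse — it is the spectral symmetry that then makes $\sigma_1(\rho(\gamma_T))$, rather than $\sigma_1(\rho(\gamma_T)^{-1})$, the correct quantity.
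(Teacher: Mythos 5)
Your proof is correct and follows the same basic strategy as the paper's: express $\lambda_1(X,\rho)$ as the growth rate of $\sigma_1(\ti{\Psi}_{T,x})$ via \autoref{rmk:lyap_def}, identify the parallel transport along the closed-up geodesic with the holonomy of the corresponding group element, and absorb the fundamental-domain and closing-arc discrepancies into a uniformly bounded multiplicative error. The one genuine difference is the final step. The paper simply asserts that parallel transport over $\gamma_T$ is $\rho(\gamma_T)$ (true with the opposite deck-transformation convention from yours) and concludes directly; you instead land on $\rho(\delta_T)^{-1}$, hence on $-\lim_T\frac{1}{T}\log\sigma_d(\rho(\gamma_T))$, and must invoke the symmetry $\lambda_1=-\lambda_d$ of the Lyapunov spectrum to convert this into $\lim_T\frac{1}{T}\log\sigma_1(\rho(\gamma_T))$. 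That symmetry does hold here, but it is stated nowhere in the paper and deserves one more line of justification: the point is not only that the flip $\iota(z,v)=(z,-v)$ preserves the Liouville measure and conjugates $\Psi_t$ to $\Psi_{-t}$, but that $E_\rho$ is pulled back from $X$ and flat, so the fibers over $x$ and $\iota x$ are canonically identified and $\ti{\Psi}_{t,\iota x}=\ti{\Psi}_{-t,x}$ as linear maps; only then does Oseledets applied at the ($v_L$-generic) flipped point give $\lambda_i=-\lambda_{d+1-i}$. Without some such input the step would fail, since $\frac{1}{T}\log\sigma_1(g_T)$ and $-\frac{1}{T}\log\sigma_d(g_T)$ need not have the same limit for a general matrix sequence in $\SL(d,\R)$. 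Alternatively, you could bypass the symmetry entirely by fixing the identification of $\pi_1(X)$ with the deck group so that the monodromy comes out as $\rho(\gamma_T)$ rather than its inverse, which is in effect what the paper does.
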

\begin{proof}
	Pick a $ x \in T^{1} X $
	generic both for the Birkhoff theorem
	and the Oseledets theorem.
	Since the operator norm of $\ti{\Psi_{t}}$ is by definition the same as its first singular value, by \autoref{rmk:lyap_def}  we have 
	\begin{equation}
		\lambda_{1}(\rho)
		=
		\lim_{T \to +\infty}
		\frac{1}{T}
		\log \norm{\ti{\Psi_{t}}}_{(E_\rho)_{x} }
		,
	\end{equation}
	where recall that $ \ti{\Psi_{t}} $
	is the lifted geodesic flow
	to the linear bundle $ E_\rho $  associated to
	$ \rho $ above $ T^{1} X $
	and
	$\norm{\ti{\Psi_{t}}}_{(E_\rho)_{x} }$
	is the operator norm of $ \ti{\Psi_{t}} $
	between the fibers
	$ (E_\rho)_{x} $ and $ (E_\rho)_{\Psi_{t} (x)} $.

	The operators
	$(\ti{\Psi_{t}})_{(E_\rho)_{x}}$
	between
	$ (E_\rho)_{x} $ and $ (E_\rho)_{\Psi_{t} (x)} $
	and the automorphism
	$(\ti{\Psi_{t}})_{\gamma_{T}} $
	of a fiber above the loop
	$ \gamma_{T} $
	differ by a
    composition by
    a bounded operator
    because 
    $(\gamma_{T})$
    is constructed
    by closing by a short geodesic arc
    the path
    $ \Psi_{[0,T]}(x) $,
	so we have:
	\begin{equation}
		\lambda_{1}(\rho)
		=
		\lim_{T \to +\infty}
		\frac{1}{T}
		\log \norm{\ti{\Psi_{t}}}_{(E_\rho)_{x} }
		=
		\lim_{T \to +\infty}
		\frac{1}{T}
		\log \norm{\ti{\Psi_{t}}}_{\gamma_{T}}
		.
	\end{equation}
	Since parallel transport in $E_\rho$ over $\gamma_T$ is given by the image of the monodromy
	$ \rho(\gamma_{T}) $, we can finally conclude
	\begin{align}
		\lambda_{1}(\rho)=
		\lim_{T \to +\infty}
		\frac{1}{T}
		\log \norm{\ti{\Psi_{t}}}_{\gamma_{T}}=
		\lim_{T \to +\infty}\frac{1}{T}
		\log \norm{\rho(\gamma_{T})}
	\end{align}
which proves what we want since by definition $\norm{\rho(\gamma_{T})}=\sigma_{1}(\rho(\gamma_{T}))$.
\end{proof}

\subsection{Proof of \autoref{th:main}}
\label{subsec:proof_main_thm}
In this section we can finally  explain how to deduce the inequality statement of 
\autoref{th:main}
from
\autoref{th:main_anosov} and show the rigidity statement for the extremal gaps situation using the thermodynamic formalism.

Let $ \rho:\pi_1(X)\to \SL(d,\R) $ be a Hitchin
representation.
Remark that we have the following
relationships between Lyapunov exponents:
\begin{equation}
	\lambda_{i+1}(\rho,X) - \lambda_{i} (\rho,X)
	=
	\lambda_{2}(\wedge^{i} \rho,X)
	-
	\lambda_{1}(\wedge^{i} \rho,X)
	,
\end{equation}
for every $ i = 1, \dots, d-1 $.
Since by \autoref{thm:Hitchin} we can apply \autoref{th:main_anosov}
to every wedge power $ \wedge^{i} \rho $ of a Hitchin representation, for $ i = 1, \dots, d-1 $,
we get the inequality part of  \autoref{th:main}.

We assume now
that $ \rho $ is a Hitchin
representation and that we are in the extremal gap case, i.e.
for every $ i = 1, \dots, d-1 $:
\begin{equation}
	\lambda_{i}(\rho,X) - \lambda_{i+1}(\rho,X)
	= 1
	.
\end{equation}
We will show that
$ \rho $ is conjugated
to the image of the Fuchsian representation
by the irreducible representation
$ \SL(2,\R) \to \SL(d,\R) $.

Let 
$ i \in \{ 1, \dots, d-1\} $
and let
$ \rho_{i}:= \wedge^{i} \rho $.
Since by assumption and by \autoref{prop:J_relation} we have
that
\begin{equation}
	1=\lambda_{1}(\rho_{i},X) - \lambda_{2}(\rho_i,X)
	=
	J(\mathbf{1},f_{\rho_{i}})
	,
\end{equation}
 the equality case of \autoref{prop:bound_thermo}  implies that
$ f_{\rho_{i}} $ is
Livsic cohomologous to 1.
In particular this means that the flows
$ \Psi_t^{f_{\rho_{i}} } $ and
$ \Psi_t $ have the same periods,
which by \autoref{rem:fuchsian} means that for every
$ \gamma \in \pi_1(X) $ it holds
\begin{equation}
	\frac{\lambda_{1}(j_X(\gamma))}
	{\lambda_{2}(j_X(\gamma))}=
 \frac{\lambda_{1}(\rho_{i}(\gamma))}
	{\lambda_{2}(\rho_{i}(\gamma))}
	=\frac{\lambda_{i}(\rho(\gamma))}
	{\lambda_{i+1}(\rho(\gamma))}
	,
\end{equation}
where $j_X:\pi_1(X)\to \SL(2,\R)$ is the Fuchsian uniformizing representation of $X$.
Since the product of the eigenvalues of any $\rho(\gamma)\in \SL(d,\R)$
is one, this implies
by an elementary calculation that
for every $ \gamma\in \pi_1(X) $ we have
\begin{equation}
	\lambda_{1}(\rho(\gamma))
    =
    \left(
	\frac{\lambda_{1}(j_X(\gamma))}
	{\lambda_{2}(j_X(\gamma))}
    \right)^{(d-1)/2}
    ,
\end{equation}
and it is well-known that
\begin{equation}
	\lambda_{1}(j^d_X(\gamma))
 =
    \left(
	\frac{\lambda_{1}(j_X(\gamma))}
	{\lambda_{2}(j_X(\gamma))}
    \right)^{(d-1)/2}
    ,
\end{equation}
where $j^d_X:\pi_1(X)\to \SL(2,\R) \to \SL(d,\R) $ is the image of $ j_X $
by the irreducible representation
$ \SL(2,\R) \to \SL(d,\R) $.

To conclude,
we apply
the \say{Hitchin rigidity} result
of
\cite[Cor. 5.19]{bridgemanIntroductionPressureMetrics2018}.
This result states that if two
Hitchin representations
$\rho_1$
and
$\rho_2$
satisfy
\begin{equation}
	\lambda_{1}(\rho_{1} (\gamma))
	=
	\lambda_{1}(\rho_{2}(\gamma))
\end{equation}
for every
$ \gamma \in \pi_1 (X) $,
then
$ \rho_{1} $
is conjugated to $ \rho_{2} $.
Applying this result
to
$\rho$
and
$j^d_X$
and
using the last
two equalities,
we get that
$\rho$
and
$j^d_X$
are conjugated. This concludes the proof of \autoref{th:main}.

\printbibliography
\end{document}